\newtheorem{lem}{Lemma}[section]
\newtheorem{prop}{Proposition}[section]
\newtheorem{cor}{Corollary}[section]
\newtheorem{thm}{Theorem}[section]
\theoremstyle{definition}
\newtheorem{definition}{Definition}[section]
\theoremstyle{remark}
\theoremstyle{remark}
\numberwithin{equation}{section}
\newcommand{\R}{{\mathbb R}}
\definecolor{blu}{rgb}{0,0,1}
\def\im{{\rm i}}
\newcommand{\vertiii}[1]{{\left\vert\kern-0.25ex\left\vert\kern-0.25ex\left\vert #1
    \right\vert\kern-0.25ex\right\vert\kern-0.25ex\right\vert}}
\begin{document}
\title[Semirelativistic NLS and  Half Wave  in arbitrary dimension]{Long time dynamics for semirelativistic NLS and  Half Wave  in arbitrary dimension}
\author{Jacopo Bellazzini}
\address{J. Bellazzini,
\newline  Universit\`a di Sassari, Via Piandanna 4, 70100 Sassari, Italy}%
\email{jbellazzini@uniss.it}%
\author{Vladimir Georgiev}
\address{V. Georgiev
\newline Dipartimento di Matematica Universit\`a di Pisa
Largo B. Pontecorvo 5, 56100 Pisa, Italy}%
\email{georgiev@dm.unipi.it}%
\author{Nicola Visciglia}
\address{N. Visciglia, \newline Dipartimento di Matematica Universit\`a di Pisa
Largo B. Pontecorvo 5, 56100 Pisa, Italy}%
\email{viscigli@dm.unipi.it}

\begin{abstract}
We consider the Cauchy problems associated with semirelativistc NLS (sNLS) and half wave (HW).
In particular we focus on the following two main questions: local/global Cauchy theory; existence and stability/instability of ground states.
In between other results, we prove the existence and stability of
ground states for sNLS  in the $L^2$ supercritical regime.
This is in sharp contrast with the instability of ground states for the
corresponding HW, which is also established along the paper, by showing an inflation of norms phenomenon.
Concerning the Cauchy theory we show, under radial symmetry assumption the following results: a local existence result
in $H^1$ for energy subcritical nonlinearity and  a global existence result
in the $L^2$ subcritical regime.

\end{abstract}

\maketitle

The aim of this paper is the analysis of the following Cauchy problems
with special emphasis to the local/global existence and uniqueness results, as well as
to the issue of existence and stability/instability of ground states:
\begin{equation}\label{evolution1}
\begin{cases}
\im \partial_t u =A u - u|u|^{p-1},\quad (t,x)\in \R\times \R^{n}\\
u(0, x)=f(x)\in H^s(\R^n),
\end{cases}
\end{equation}
where $A=\sqrt {-\Delta} \hbox{ and } A=\sqrt{1-\Delta},$ namely
Half Wave (HW)
and semirelativistic NLS (sNLS).
Since now on
$H^s(\R^n)$ and $\dot H^s(\R^n)$ denote respectively
the usual inhomogenoues and homogeneous
Sobolev spaces in $\R^n$, endowed with the norms $\|(1-\Delta)^{s/2} u\|_{L^2(\R^n)}$ and
$ \|(-\Delta)^{s/2} u\|_{L^2(\R^n)}$. We shall also refer
to $H^s_{rad}(\R^n)$ as to the set of functions belonging to $H^s(\R^n)$ which are radially symmetric.
\\

Along the paper we shall study several properties of the Cauchy problems associated with
sNLS and HW. The first result will concern the local/global Cauchy theory at
low regularity under an extra radiality assumption. We point out that at the best of our knowledge in the literature there exist very few results about the global existence of solutions to both HW and sNLS. In particular we mention the result in \cite{OV} where it is
considered HW in $1-d$ with nonlinearity $u|u|^3$ and initial data in $H^1(\R)$,
without any further symmetry assumption. Indeed the aforementioned result can be extended to $1-d$ sNLS with quartic nonlinearity. We also underline that in $1-d$ no results are available concerning the global existence for higher order nonlinearity, namely $p>4$.
One novelty in this paper is that we provide global existence results in higher dimension $n\geq 2$
under the radial symmetry assumption, provided that $p$ satisfies some restrictions.
\\

Another important issue considered along this article is the the existence and stability/instability properties of solitary waves associated with sNLS and HW. Of course, the first main ingredient
in order to speak about dynamical properties of the solitary waves, is
a robust Cauchy theory that at the best of our knowledge is provided in this paper for the first time in the radially symmetric setting.
\\

We recall that two values of the nonlinearity $p$ are quite relevant: the nonlinearity $u|u|^{2/(n-1)}$,
which is $H^{1/2}(\R^n)$-critical, and the nonlinearity
$u|u|^{2/n}$, which is $L^2(\R^n)$-critical.
Next we present our main result about the Cauchy problems \eqref{evolution1}:
we prove on one hand a local existence result in $H^1_{rad}(\R^n)$
via contraction argument for $H^{1/2}(\R^n)$ subcritical nonlinearity; on the other hand we
show that the solutions are global in time provided that the nonlinearity is $L^2(\R^n)$-subcritical and
we assume an a-priori bound on $H^{1/2}(\R^n)$ norm of the solution.

\begin{thm} \label{t.le0} Let $n\geq 2$, $A$ be either $\sqrt{-\Delta}$ or $\sqrt{1-\Delta}$,
$p \in (1,1+\frac 2{n-1})$. Then for every $R>0$ there exists $T=T(R)>0$
and a Banach space $X_T$ such that:
\begin{itemize}
\item $X_T\subset {\mathcal C}([0,T];H^{1}_{rad}(\R^n))$;
 \item
for any $f(x)\in H^{1}_{rad}(\R^n)$ with
$ \|f\|_{H^{1}(\R^n)} \leq R$,
there exists a unique solution  $u(t,x) \in X_T $ of \eqref{evolution1}.
\end{itemize}
Assume moreover that  $p\in(1,1+\frac 2n)$,
then  the solution is global in time.
\end{thm}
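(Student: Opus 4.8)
The strategy is a contraction-mapping argument built on radial Strichartz estimates for the local part, followed by a globalization driven by the conservation laws. For the local existence I would recast \eqref{evolution1} in Duhamel form,
\[
u(t)=e^{-\im tA}f+\im\int_0^t e^{-\im (t-s)A}\big(u|u|^{p-1}\big)(s)\,ds ,
\]
and obtain the solution as a fixed point of the map $\Phi$ sending $u$ to the right-hand side, working in a space $X_T=\mathcal C([0,T];H^{1}_{rad}(\R^n))\cap Y_T$, where $Y_T$ is a suitable radial Strichartz space (an intersection of $L^q_tL^r_x$-type norms carrying fractional derivatives). The crucial analytic ingredient is a family of Strichartz estimates for the propagators $e^{-\im tA}$ restricted to radial data: since $A$ is non-elliptic with wave-type dispersion the non-radial estimates lose derivatives, whereas radiality restores a wider admissible range, which is exactly why the theorem imposes radial symmetry.

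On $X_T$ I would bound the Duhamel term by the inhomogeneous Strichartz inequality, using the fractional Leibniz (Kato--Ponce) rule to move the derivative $(1-\Delta)^{1/2}$ onto $u|u|^{p-1}$. Because $p<1+\tfrac2{n-1}$ the nonlinearity is $H^{1/2}$-subcritical, so H\"older in time yields a positive power $T^{\theta}$ and an estimate of the form
\[
\|\Phi(u)-\Phi(v)\|_{X_T}\le C\,T^{\theta}\big(\|u\|_{X_T}^{p-1}+\|v\|_{X_T}^{p-1}\big)\|u-v\|_{X_T}.
\]
Taking $T=T(R)$ small then makes $\Phi$ a contraction on a ball of $X_T$ of radius fixed by $R=\|f\|_{H^1}$, which produces the unique local solution. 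I expect the main obstacle to sit precisely here: establishing the radial Strichartz estimates with enough admissible pairs to close the contraction at the $H^1$ level, and coping with the limited smoothness of $z\mapsto z|z|^{p-1}$ when $p$ is not an odd integer.

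For the global statement I would first record the conserved mass $M(u)=\|u\|_{L^2}^2$ and energy
\[
E(u)=\tfrac12\langle A u,u\rangle-\tfrac1{p+1}\int_{\R^n}|u|^{p+1}\,dx ,
\]
where $\langle Au,u\rangle=\|u\|_{\dot H^{1/2}}^2$ for $A=\sqrt{-\Delta}$ and $\langle Au,u\rangle=\|u\|_{H^{1/2}}^2$ for $A=\sqrt{1-\Delta}$, justifying their constancy on the $H^1$ solutions by a standard regularization argument. The Gagliardo--Nirenberg inequality gives
\[
\int_{\R^n}|u|^{p+1}\,dx\le C\,\|u\|_{L^2}^{(p+1)-n(p-1)}\,\|u\|_{\dot H^{1/2}}^{\,n(p-1)} ,
\]
and the exponent $n(p-1)$ is strictly less than $2$ exactly when $p<1+\tfrac2n$. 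Substituting this into $E$, using mass conservation and Young's inequality to absorb the potential term, yields the uniform a priori bound $\sup_{t}\|u(t)\|_{H^{1/2}}\le C\big(M(f),E(f)\big)$; this is the only point where the restriction $p<1+\tfrac2n$ is needed rather than the weaker $H^{1/2}$-subcritical condition of the first part.

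Finally I would globalize with this bound. Since $1+\tfrac2n\le 1+\tfrac2{n-1}$ the nonlinearity is in particular $H^{1/2}$-subcritical, so the very same contraction scheme run over $H^{1/2}_{rad}(\R^n)$ gives a local solution on an interval whose length depends only on $\|f\|_{H^{1/2}}$; combined with the uniform $H^{1/2}$ bound this lets the $H^{1/2}$ solution be continued by steps of a fixed length, hence to exist for all times, with finite Strichartz norms on every bounded interval. Persistence of regularity then upgrades this to a global $H^1$ solution: estimating $(1-\Delta)^{1/2}u$ through Strichartz and the fractional Leibniz rule, with the relevant norms now controlled globally by the bounded $H^{1/2}$ norm, produces a Gronwall inequality $\|u(t)\|_{H^1}\le\|f\|_{H^1}\exp\big(C\int_0^t\eta(s)\,ds\big)$ with $\int_0^T\eta<\infty$ on each finite interval. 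Therefore $\|u(t)\|_{H^1}$ remains finite for all $t$ and the solution is global.
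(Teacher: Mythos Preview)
Your local-existence route (radial Strichartz plus Kato--Ponce) differs from the paper's. The paper uses no Strichartz estimates: it combines the energy bound with Kato smoothing (Proposition~\ref{freeev}), obtaining weighted norms $\|[x]_\delta^{-1/q}\nabla u\|_{L^q_TL^2_x}$, and controls the nonlinearity \emph{pointwise} via the radial Strauss inequality $\||x|^{(n-1)/2}u\|_{L^\infty}\le C\|u\|_{H^s}$, $s>\tfrac12$ (Proposition~\ref{BG1l}). Working at $H^1$ is deliberate so that only integer derivatives appear and no fractional Leibniz rule is needed; the authors remark that even $H^{1/2+\epsilon}$ would already require commutator estimates they omit. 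Your alternative could perhaps be made to work, but you would have to specify which radial Strichartz pairs for $e^{-it\sqrt{-\Delta}}$ actually close the contraction at $H^1$; this is not automatic for wave-type dispersion.

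The globalization, however, has a genuine gap. You assert that ``the very same contraction scheme run over $H^{1/2}_{rad}$ gives a local solution on an interval whose length depends only on $\|f\|_{H^{1/2}}$'' and then iterate. But no $H^{1/2}$ local theory is established here: the Strauss bound fails at the endpoint $s=\tfrac12$, and with Strichartz alone it is far from clear one can close at that regularity for the half-wave propagator. The paper does something quite different. It never leaves $H^1$; instead it feeds the a-priori $H^{1/2}$ bound into the Brezis--Gallou\"et inequality~\eqref{brgal},
\[
\||x|^{(n-1)/2}u\|_{L^\infty}\le C\,\|u\|_{H^{1/2}}\sqrt{\ln\Big(2+\frac{\|u\|_{H^1}}{\|u\|_{H^{1/2}}}\Big)},
\]
to obtain $g(T)\le C\|f\|_{H^1}+C\max\{T,T^{\gamma}\}\,g(T)\,\ln^{(p-1)/2}(2+Cg(T))$ for $g(T)=\sup_{[0,T]}\|u\|_{X_t}$. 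Iterating this yields time steps $\bar T_j\gtrsim j^{-(p-1)/(2-(n-1)(p-1))}$, and the restriction $p<1+\tfrac2n$ is precisely the condition making $\sum_j\bar T_j=\infty$. This is not a standard persistence-plus-Gronwall argument: the threshold emerges from the divergence of that series, and your sketch supplies no substitute for this step.
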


We point out that by a cheap argument (based only on Sobolev embedding and energy estimates) one can solve locally in time the
Cauchy problem \eqref{evolution1} for initial data  $f(x)\in H^{n/2+\epsilon}(\R^n)$ (without any radiality assumption). Notice that
we provide a local existence result, in radial symmetry,
with regularity $H^1(\R^n)$ for $n\geq 2$. Indeed
it will be clear to the reader, by looking at the proof of Theorem \ref{t.le0}, that
one can push the local theory at
the level of regularity $H^{1/2+\epsilon}_{rad}(\R^n)$. The main technical difficulty
to go from $H^1_{rad}(\R^n)$ to $H_{rad}^{1/2+\epsilon}(\R^n)$ being the fact that
in the first case we work with straight derivatives, and hence
the weighted chain rules that we need along the proof are straightforward.
In the second case the proof requires more delicate commutator estimates that we prefer to skip along this paper.
We also underline that in the $L^2$ subcritical regime we get a global existence result.
\\
\\

Next we shall analyze the issue of standing waves. We recall that standing waves are special solutions
to \eqref{evolution1} with a special structure, namely
$u(t, x)=e^{i\omega t} v(x)$, where $\omega\in \R$ plays the role of the frequency.
Indeed $u(t,x)$ is a standing wave solution if and only if $v(x)$ satisfies
\begin{equation}\label{solitwaves}A v + \omega v - v|v|^p=0\quad
\hbox{ in } \R^n.\end{equation} It is worth mentioning that, following the pioneering paper \cite{CL},
it is well understood how to build up solitary waves for both sNLS and HW
via a energy constrained minimization argument, in the case of $L^2$-subcritical nonlinearity.
Moreover as a byproduct of this variational approach, the corresponding solitary waves are   orbitally stable.
We recall that sNLS and HW enjoy respectively
the conservation of the following energy:
\begin{equation}\label{eq:varprob}
{\mathcal E}_{s}(u)=\frac{1}{2}\|u\|_{H^{1/2}(\R^n)}^2-\frac{1}{p+1}\| u \|^{p+1}_{L^{p+1}(\R^n)},
\end{equation}
\begin{equation}\label{eq:varprob2}
{\mathcal E}_{hw}(u)=\frac{1}{2}\|u\|_{\dot H^{1/2}(\R^n)}^2+\frac{1}{2}\|u\|_{2}^2-\frac{1}{p+1}\| u  \|^{p+1}_{L^{p+1}(\R^n)},
\end{equation}
as well as the conservation of the mass, namely:
\begin{equation}\label{masscons} \frac d{dt} \|u(t, x)\|_{L^2(\R^n)}^2=0
\end{equation}
for solutions $u(t,x)$ associated with \eqref{evolution1}.
In the nonlocal context in which we are interested in, the minimization problems analogue of the one studied in
\cite{CL} for NLS are the following ones:
\begin{equation}\label{Jsub}{\mathcal J}^s_r=\inf_{u\in S_r} {\mathcal E}_s(u), \quad \quad
{\mathcal J}^{hw}_r=\inf_{u\in S_r}  {\mathcal E}_{hw}(u)\end{equation}
where
\begin{equation}\label{Srsub}S_r=\big \{u\in H^{1/2}(\R^n) \hbox{ s.t. } \|u\|_{L^2(\R^n)}^2=r\big \}.
\end{equation}
Indeed it is not difficult (following the rather classical concentration-compactness argument,
see for instance \cite{BFV} for more details in the non-local setting)
to get a strong compactness property (up to translation)
for minimizing sequences associated with the minimization problems above,
provided that the nonlinearity is $L^2$ subcritical, i.e. $1<p<1+\frac 2n$.
By combining this fact
with the global existence result
stated in Theorem \ref{t.le0}, one can prove a stability result that we state below.
In order to do that first we need to introduce a suitable notion of stability,
that is weaker respect to the usual one.
This is
due mainly to the fact that we are not able to get
any  global existence result for the Cauchy problem associated with sNLS and HW at the level of regularity of the Hamiltonian $H^{1/2}$ and without the radiality assumption.
Hence we need to
assume more regularity and also the radial symmetry on the perturbations allowed
along the definition of stability.
\begin{definition}\label{wos}
Let ${\mathcal N}\subset H^{1}_{rad}(\R^n)$ be bounded in $H^{1/2}(\R^n)$.
We say that $\mathcal N$
is {\em weakly orbitally stable} by the flow associated with sNLS (resp. HW) if for any $\epsilon>0$ there exists $\delta>0$
such that
$$dist_{H^{1/2}}(u(0,.), {\mathcal N})<\delta \hbox{ and } u(0,x)\in H^{1}_{rad}(\R^n)
\Rightarrow
$$$$ \Phi_t(u(0,.)) \hbox{ is globally defined and }
\sup_t dist_{H^{1/2}}(\Phi_t(u(0,.)), {\mathcal N})<\epsilon$$
where $dist_{H^{1/2}}$ denotes the usual distance
with respect to the topology of $H^{1/2}$
and $\Phi_t(u(0,.)$ is the unique global solution associated with the Cauchy problem sNLS
(resp. HW)
and with initial condition $u(0,x)$.
\end{definition}
We can now state the next result, where we use the notations \eqref{Jsub} and \eqref{Srsub}. We state it as a corollary since it is a classical consequence of the concentration-compactness argument in the spirit of \cite{CL} and Theorem \ref{t.le0}, that guarantees a global dynamic for sNLS and HW. Hence we shall not provide the straightforward
proof along the paper. Neverthless we believe that it has its own interest.
\begin{cor}\label{semirstabilsubcrit} Let $1<p<1+\frac 2{n}$ and $n\geq 1$.
Then for every $r>0$ we have:
\begin{itemize}
\item ${\mathcal J}_r^s>-\infty$ (resp. ${\mathcal J}_r^{hw}>-\infty$) and
${\mathcal B}_r^s \neq \emptyset$ (resp. ${\mathcal B}_r^{hw}\neq \emptyset$) where
$${\mathcal B}_r^s:=\{v\in S_r \hbox{ s.t. }{\mathcal E}_{s}(v)={\mathcal J}_r^s\}$$
(resp. ${\mathcal B}_r^{hw}:=\{v\in S_r \hbox{ s.t. }{\mathcal E}_{hw}(v)={\mathcal J}_r^{hw}\}$).
In particular for every $v\in {\mathcal B}_r^s$ (resp.
$v\in {\mathcal B}_r^{hw}$) there exists
$\omega\in \R$ such that $$\sqrt{1-\Delta}  v + \omega v-v|v|^{p-1}=0$$
(resp. $\sqrt{-\Delta} v + \omega v-v|v|^{p-1}=0$);
\item the set ${\mathcal B}_r^s$ (resp. ${\mathcal B}_r^{hw}$)
is {\em weakly orbitally stable} by the flow associated with sNLS (resp. HW).
Moreover in the case $n=1$ the {\em weak orbital stability} property
can be strengthened, in the sense that in the Definition \ref{wos}
we can replace $H^1_{rad}(\R)$ by the larger space $H^1(\R)$.

\end{itemize}
\end{cor}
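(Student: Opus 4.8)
The plan is to run the Cazenave--Lions constrained-minimization scheme of \cite{CL}, adapted to the nonlocal operators, and then to couple the resulting compactness of minimizing sequences with the conservation laws and the global well-posedness of Theorem \ref{t.le0}. I would treat sNLS and HW in parallel, the only structural difference being the quadratic form; since both $\|\cdot\|_{H^{1/2}}^2$ and $\|\cdot\|_{\dot H^{1/2}}^2+\|\cdot\|_2^2$ are equivalent to the full $H^{1/2}$ norm, the analytic inputs are identical. On the constraint $S_r$ the mass term $\frac12\|u\|_2^2=\frac r2$ is frozen, so in both cases the minimization reduces to that of a \emph{reduced} functional built only from the ``$\dot H^{1/2}$-type'' kinetic part minus $\frac1{p+1}\|u\|_{p+1}^{p+1}$.

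First I would prove that the infima are finite and that minimizing sequences are bounded. The tool is the Gagliardo--Nirenberg inequality
$$\|u\|_{L^{p+1}(\R^n)}^{p+1}\le C\,\|u\|_{\dot H^{1/2}(\R^n)}^{\,n(p-1)}\,\|u\|_{L^2(\R^n)}^{\,p+1-n(p-1)},$$
whose exponents are dictated by scaling. The crucial fact is that in the $L^2$-subcritical window $1<p<1+\frac2n$ one has $n(p-1)<2$, so on $S_r$ (where the $L^2$ factor is constant) Young's inequality absorbs the potential term into the kinetic one. This yields ${\mathcal J}_r^s,{\mathcal J}_r^{hw}>-\infty$ and shows minimizing sequences are $H^{1/2}$-bounded. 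Testing the reduced functional with the mass-preserving rescaling $u_\lambda(x)=\lambda^{n/2}u(\lambda x)$ and letting $\lambda\to0^+$, the potential term (of order $\lambda^{n(p-1)/2}$) dominates the kinetic one, so the reduced infimum is strictly negative; this will be used to exclude vanishing. The same amplitude scaling $u\mapsto\sqrt\theta\,u$ gives, for $\theta>1$, the strict inequality for the reduced infimum $g$, namely $g(\theta r)<\theta\,g(r)$ (using $\theta^{(p-1)/2}>1$), whence $r\mapsto g(r)/r$ is strictly decreasing and the strict subadditivity $g(r)<g(\alpha)+g(r-\alpha)$, $0<\alpha<r$, follows.

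With these in hand I would extract a minimizer. For $n\ge2$ the radiality assumption does the work: the embedding $H^{1/2}_{rad}(\R^n)\hookrightarrow L^{p+1}(\R^n)$ is compact throughout the subcritical range, so a bounded radial minimizing sequence converges, along a subsequence, weakly in $H^{1/2}$ and strongly in $L^{p+1}$ to some $u$; weak lower semicontinuity of the kinetic part together with the strict monotonicity of $g$ forces no mass loss (otherwise $g(\|u\|_2^2)>g(r)$ would contradict $g(\|u\|_2^2)\le g(r)$), so $u\in S_r$ is a minimizer and the convergence upgrades to $H^{1/2}$. For $n=1$, where radial compactness fails, I would instead invoke the full concentration--compactness principle: vanishing is ruled out by $g<0$, dichotomy by the strict subadditivity just established, leaving compactness up to translations. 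In either case ${\mathcal B}_r^s,{\mathcal B}_r^{hw}\ne\emptyset$, and since each minimizer is a critical point of a $C^1$ functional under the single constraint $\|u\|_2^2=r$, the Lagrange multiplier theorem supplies $\omega\in\R$ with ${\mathcal E}_s'(v)+\omega v=0$, i.e. $\sqrt{1-\Delta}\,v+\omega v-v|v|^{p-1}=0$ (resp. $\sqrt{-\Delta}\,v+\omega v-v|v|^{p-1}=0$).

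Finally, for weak orbital stability I would argue by contradiction: if ${\mathcal B}_r$ were not weakly orbitally stable there would exist $\epsilon_0>0$, data $u_{0,k}\in H^1_{rad}$ with $dist_{H^{1/2}}(u_{0,k},{\mathcal B}_r)\to0$, and times $t_k$ with $dist_{H^{1/2}}(\Phi_{t_k}(u_{0,k}),{\mathcal B}_r)\ge\epsilon_0$. Since the data are radial and $p$ is $L^2$-subcritical, Theorem \ref{t.le0} makes each orbit global, and conservation of mass and energy gives $\|\Phi_{t_k}(u_{0,k})\|_2^2\to r$ and ${\mathcal E}(\Phi_{t_k}(u_{0,k}))\to{\mathcal J}_r$ (by continuity of ${\mathcal E}$ on $H^{1/2}$ and of $r\mapsto{\mathcal J}_r$). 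Rescaling by $\sqrt{r/\|\Phi_{t_k}(u_{0,k})\|_2^2}\to1$ to restore the exact mass produces a genuine minimizing sequence in $S_r$, so the precompactness of the previous step yields $H^{1/2}$-convergence (modulo translations, which are absorbed since ${\mathcal B}_r$ is translation invariant) to some $v\in{\mathcal B}_r$, forcing $dist_{H^{1/2}}(\Phi_{t_k}(u_{0,k}),{\mathcal B}_r)\to0$ and contradicting $\epsilon_0$. The strengthening to $H^1(\R)$ when $n=1$ requires only that global existence hold for arbitrary (non-radial) $H^1(\R)$ data, which is available in one dimension; the contradiction scheme then goes through verbatim. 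I expect the genuine obstacle to be exactly the precompactness of minimizing sequences --- the radial compact embedding for $n\ge2$, and the strict-subadditivity exclusion of dichotomy together with the no-mass-loss step for $n=1$ --- while boundedness below, the Euler--Lagrange equation, and the insertion of Theorem \ref{t.le0} are routine once compactness is secured.
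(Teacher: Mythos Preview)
Your proposal is correct and matches what the paper itself indicates: the authors explicitly do \emph{not} write out a proof, stating only that the result ``is a classical consequence of the concentration--compactness argument in the spirit of \cite{CL} and Theorem~\ref{t.le0}'' (with reference to \cite{BFV} for the non-local adaptation). Your outline --- Gagliardo--Nirenberg for boundedness, strict subadditivity to rule out dichotomy, negativity to rule out vanishing, Lagrange multiplier for the Euler--Lagrange equation, and the Cazenave--Lions contradiction scheme fed by the global existence of Theorem~\ref{t.le0} --- is exactly that programme.

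One small deviation is worth flagging. For $n\ge 2$ you propose to shortcut concentration--compactness by the compact embedding $H^{1/2}_{rad}\hookrightarrow L^{p+1}$, whereas the paper (and \cite{BFV}) works with general minimizing sequences and obtains compactness \emph{up to translation}. Your route is legitimate, but it needs one extra line you did not write: since $S_r$ carries no radiality assumption, you must first invoke Schwarz symmetrization (which lowers $\|\cdot\|_{\dot H^{1/2}}$ and preserves $\|\cdot\|_{L^2}$, $\|\cdot\|_{L^{p+1}}$) to know that the radial infimum coincides with $\mathcal{J}_r$, so that your radial limit actually lies in $\mathcal{B}_r$. The paper's translation-based approach avoids this step and yields the precompactness statement in the stronger form (all minimizing sequences, not just radial ones), though for the weak-orbital-stability argument your radial version suffices, since the flow preserves radiality.
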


On the contrary, the situation dramatically changes in the $L^2$-supercritical regime
(namely $p>1+\frac 2{n}$) since the aforementioned minimization problems \eqref{Jsub} are meaningless, in the sense
that: \begin{equation}\label{...}{\mathcal J}_r^s={\mathcal J}_r^{hw}=-\infty, \quad \forall r>0, \quad p>1+\frac 2{n}.\end{equation}

Next result is aimed to show a special geometry (local minima) for the constrained energy
associated to sNLS
in the $L^2$-supercritical regime, i.e $1+\frac 2n<p<1+\frac 2{n-1}$.
In order to state our next result let us first introduce a family of localized and
constrained minimization problems:
\begin{equation}\label{Jr}{\mathcal J}_r=\inf_{u\in S_r\cap B_1} {\mathcal E}_s (u),
\end{equation}
where
$$B_\rho=\{u\in H^{1/2}(\R^n) \hbox{ s.t. } \|(1-\Delta)^{\frac 14} u\|_{L^2(\R^n)}\leq \rho\}.$$
We also recall that the notion of {\em weak orbital stability} is given in Definition \ref{wos}.
\begin{thm}\label{semir} Let $1+\frac 2n<p<1+\frac 2{n-1}$ and $n\geq 1$.
There exists $r_0>0$ such that the following conditions occur
for every $r\in (0, r_0)$:
\begin{itemize}
\item ${\mathcal J}_r>-\infty$,
${\mathcal B}_r \neq \emptyset$ and ${\mathcal B}_r\subset B_{1/2}\cap H^1(\R^n)$, where
$${\mathcal B}_r:=\{v\in S_r\cap B_1 \hbox{ s.t. }{\mathcal E}_{s}(v)={\mathcal J}_r\}.$$
In particular for every $v\in {\mathcal B}_r$ there exists
$\omega\in \R$ such that $$\sqrt{1-\Delta} v + \omega v-v|v|^{p-1}=0;$$
\item the elements in ${\mathcal B}_r$ are ground states on $S_r$, namely:
$$\inf_{{\mathcal C}_r} {\mathcal E}_s(w)
={\mathcal J}_r \hbox{ where }
{\mathcal C_r}=\{w\in S_r\hbox{ s.t. } {{\mathcal E}_s'}|_{S_r}(w)=0\}.$$
\end{itemize}
Assume moreover the following assumption:
\begin{equation}\label{**}\sup_{(-T_-(f), T_+(f))} \|u(t, x)\|_{H^{1/2}(\R^n)}<\infty \Rightarrow
T_\pm(f) =\infty\end{equation}
where $(-T_- (f), T_+ (f))$
is the maximal time of existence of $u(t,x)$ which is the nonlinear solution to sNLS
with initial datum $f(x)\in H^1_{rad}(\R^n)$.
Then we get:
\begin{itemize}
\item the set ${\mathcal B}_r\cap H^{1/2}_{rad}(\R^n)$
is {\em weakly orbitally stable} for the flow associated with sNLS.
\end{itemize}
\end{thm}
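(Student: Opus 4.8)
The plan is to run a Cazenave--Lions type contradiction argument, adapted to the fact that here the minimization \eqref{Jr} is \emph{constrained} to the ball $B_1$ and the minimum is only a local one. The three pillars are: the conservation of the mass \eqref{masscons} and of the energy ${\mathcal E}_s$ in \eqref{eq:varprob}; the strong $H^{1/2}$ compactness of minimizing sequences for ${\mathcal J}_r$, which in the radial class holds \emph{without} translations thanks to the Strauss-type compact embedding; and the interior localization ${\mathcal B}_r\subset B_{1/2}$ granted by the first bullet. Two structural facts make all objects stay inside the radial class ${\mathcal N}={\mathcal B}_r\cap H^{1/2}_{rad}(\R^n)$: the energy is continuous on $H^{1/2}(\R^n)$, because $p<1+\tfrac{2}{n-1}$ renders the nonlinearity $H^{1/2}$-subcritical and hence $H^{1/2}\hookrightarrow L^{p+1}$; and the flow preserves radial symmetry, since $\sqrt{1-\Delta}$ is a rotation-invariant Fourier multiplier and the nonlinearity is local.

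First I would extract from the compactness the \emph{energy gap}
\[
\inf_{S_r\cap\partial B_1}{\mathcal E}_s \;>\; {\mathcal J}_r .
\]
Indeed any sequence on $S_r\cap\partial B_1$ with energy tending to ${\mathcal J}_r$ would be a minimizing sequence for ${\mathcal J}_r$, hence, up to a subsequence, converge strongly in $H^{1/2}$ to some $v\in{\mathcal B}_r$; but then $\|v\|_{H^{1/2}}=1$, contradicting ${\mathcal B}_r\subset B_{1/2}$. A routine continuity of $r\mapsto{\mathcal J}_r$ and of the boundary infimum (equivalently, a harmless mass-normalization by scaling) keeps this gap uniform for masses close to $r$.

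Then I would set up the contradiction. If ${\mathcal N}$ were not weakly orbitally stable in the sense of Definition \ref{wos}, there would exist $\epsilon_0>0$ and radial data $f_n\in H^1_{rad}(\R^n)$ with $dist_{H^{1/2}}(f_n,{\mathcal N})\to0$ whose solutions either fail to be global or satisfy $dist_{H^{1/2}}(\Phi_{t_n}(f_n),{\mathcal N})\ge\epsilon_0$ at some time $t_n$. Since ${\mathcal N}\subset B_{1/2}$ and the energy is $H^{1/2}$-continuous, for large $n$ one has $f_n\in B_{3/4}$ and ${\mathcal E}_s(f_n)\to{\mathcal J}_r$, while $\|f_n\|_{L^2}^2=:r_n\to r$. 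By Theorem \ref{t.le0} the $H^1_{rad}$ solution exists on a maximal interval, and along it mass and energy are conserved; letting $T_n^\ast$ be the first exit time from $B_1$, a value in $\partial B_1\cap S_{r_n}$ at $t=T_n^\ast$ would force ${\mathcal E}_s(f_n)\ge\inf_{S_{r_n}\cap\partial B_1}{\mathcal E}_s$, impossible for large $n$ by the gap. Hence the solution never leaves $B_1$, so $\sup_t\|\Phi_t(f_n)\|_{H^{1/2}}\le1$, and assumption \eqref{**} upgrades this a priori bound to global existence, killing the ``non-global'' alternative. With the solutions global and trapped in $B_1$, the escape sequence $v_n:=\Phi_{t_n}(f_n)$ is radial, bounded in $H^{1/2}$, of mass $r_n\to r$ and energy ${\mathcal E}_s(v_n)\to{\mathcal J}_r$, hence a minimizing sequence for ${\mathcal J}_r$. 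Radial compactness then yields a subsequence converging strongly in $H^{1/2}$ to some $v\in{\mathcal B}_r$, which is radial, so $v\in{\mathcal N}$; thus $dist_{H^{1/2}}(v_n,{\mathcal N})\to0$, contradicting $dist_{H^{1/2}}(v_n,{\mathcal N})\ge\epsilon_0$.

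The main obstacle I expect is exactly the interplay between the \emph{local} nature of the well and globalness: one must confine the flow to $B_1$ using only the conserved energy together with the gap, and then feed the resulting uniform $H^{1/2}$ bound into \eqref{**} to obtain global existence, all while the conserved mass drifts to $r$ rather than equalling it, which forces the continuity-in-$r$ (or scaling) bookkeeping. Everything else, namely the $H^{1/2}$-continuity of the energy, the preservation of radiality by the flow, and the concentration-compactness yielding strong $H^{1/2}$ convergence without translations in the radial class, is standard.
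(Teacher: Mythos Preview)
Your proposal addresses only the third bullet (weak orbital stability), taking the conclusions of the first two bullets---in particular ${\mathcal B}_r\subset B_{1/2}$ and the strong $H^{1/2}$ compactness of minimizing sequences---as given. The paper does the opposite: Section 3 is devoted entirely to the first two bullets (the local-minima structure in Proposition \ref{locmin}, avoiding vanishing and dichotomy, compactness of minimizing sequences up to translation, and the ground-state characterization via a Pohozaev-type identity), while stability is dismissed in one sentence as ``the classical argument of Cazenave--Lions once a nice Cauchy theory has been established.'' So your write-up and the paper's proof are complementary rather than competing; as a proof of the full statement the proposal is incomplete, since the existence of the local minimum and the ground-state property are precisely the nontrivial content here.

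For the stability bullet itself your argument is correct and is exactly the Cazenave--Lions contradiction the paper invokes. One remark on the energy-gap step: your extraction of $\inf_{S_r\cap\partial B_1}{\mathcal E}_s>{\mathcal J}_r$ via compactness is valid but indirect. The paper's Proposition \ref{locmin} gives the quantitative bounds ${\mathcal E}_s>\tfrac14$ on $S_r\cap\partial B_1$ and ${\mathcal J}_r<\tfrac r2$, \emph{uniformly for all} $r<r_0$. This makes the mass-drift bookkeeping you flag genuinely trivial: since the gap holds simultaneously on every sphere $S_{r_n}$ with $r_n<r_0$, no continuity-of-infimum or rescaling argument is needed to run the trapping. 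With that in hand, the confinement to $B_1$, the appeal to \eqref{**} for global existence, and the final compactness contradiction go through exactly as you describe.
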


We point out that
the extra assumption \eqref{**}
it is satisfied
for $n=1$ and $p=4$ without any radiality assumption (it follows by a suitable adaptation to sNLS of the argument given in \cite{OV} for HW).
An alternative and simpler argument for the global existence of $1-d$
quartic sNLS is given in the Appendix. Hence the statement above
provides the existence of stable standing waves for the quartic $1-d$ sNLS,
by removing the condition \eqref{**}.

More precisely we can state the following result.

\begin{cor}\label{corwos}
Let $n=1$ and $p=4$. Then
under the same notations as in Theorem \ref{semir}
we have that for $r<r_0$ the corresponding set ${\mathcal B}_r$ is
{\em weakly orbitally stable}. Indeed ${\mathcal B}_r$ satisfies
a straightened version of the property given in Definition \ref{wos}, where we can replace
$H^1_{rad}(\R)$ by $H^1(\R)$.
\end{cor}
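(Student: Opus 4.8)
The plan is to read Corollary \ref{corwos} as the statement that, in the single case $n=1$, $p=4$, the conditional and radial features of Theorem \ref{semir} can both be removed, because the only place where those features intervene is in guaranteeing a global-in-time flow. Observe first that $p=4$ lies in the admissible window $1+\frac2n=3<p<1+\frac2{n-1}=+\infty$, so Theorem \ref{semir} applies and already delivers, for every $r<r_0$, that ${\mathcal J}_r>-\infty$, ${\mathcal B}_r\neq\emptyset$, ${\mathcal B}_r\subset B_{1/2}\cap H^1(\R)$, the Euler--Lagrange equation, and the ground state characterization. Crucially, the entire variational part of that proof (existence of minimizers, the local-minimum geometry placing ${\mathcal B}_r$ strictly inside $B_{1}$, and the compactness up to translations of minimizing sequences for ${\mathcal J}_r$ in $H^{1/2}(\R)$) is carried out without any radiality assumption; radial symmetry was imposed \emph{only} to have a global flow at the $H^1$ level through \eqref{**}. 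Thus the two things to supply are (i) the discharge of hypothesis \eqref{**}, and (ii) the enlargement of admissible perturbations from $H^1_{rad}(\R)$ to $H^1(\R)$.

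First I would record the global Cauchy theory. By the adaptation to sNLS of the argument of \cite{OV} (or, more directly, by the argument of the Appendix), the $1-d$ quartic sNLS is globally well posed in $H^1(\R)$ for arbitrary, not necessarily radial, data. In particular the maximal flow $\Phi_t$ is defined on all of $H^1(\R)$ and assumption \eqref{**} holds automatically. This single input removes at once the conditional character of the last bullet of Theorem \ref{semir} and the need for radiality in the Cauchy theory.

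Next I would transcribe the stability argument of Theorem \ref{semir} in $H^1(\R)$, keeping the mechanism translation covariant. The local-minimum geometry gives that ${\mathcal E}_s$ takes on $S_r\cap\partial B_1$ a value strictly larger than ${\mathcal J}_r$, so a solution issued from a datum close in $H^{1/2}$ to ${\mathcal B}_r\subset B_{1/2}$ stays trapped in $B_1$ by energy conservation and continuity; hence $\|(1-\Delta)^{1/4}\Phi_t\|_{L^2}\le 1$ and, with mass conservation, $\sup_t\|\Phi_t\|_{H^{1/2}}<\infty$. Arguing by contradiction, if stability failed there would be data $f_m\in H^1(\R)$ with $dist_{H^{1/2}}(f_m,{\mathcal B}_r)\to 0$ and first exit times $t_m$ with $dist_{H^{1/2}}(\Phi_{t_m}(f_m),{\mathcal B}_r)=\epsilon_0$; by conservation of mass and energy, $\Phi_{t_m}(f_m)$ is then a minimizing sequence for ${\mathcal J}_r$ inside $S_r\cap B_1$, so by the concentration--compactness of Theorem \ref{semir} it converges in $H^{1/2}(\R)$, up to translations, to some $v\in{\mathcal B}_r$. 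Since ${\mathcal B}_r$ is translation invariant this forces $dist_{H^{1/2}}(\Phi_{t_m}(f_m),{\mathcal B}_r)\to 0$, contradicting $\epsilon_0>0$. This step also explains why the stable set is the full orbit ${\mathcal B}_r$ rather than ${\mathcal B}_r\cap H^{1/2}_{rad}(\R)$: dropping radiality one must reinstate the translation parameter in the compactness, and the corresponding invariant set is the whole translation orbit.

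The hard part, and the only genuinely new ingredient, is the unconditional global well posedness in $H^1(\R)$ of the $1-d$ quartic sNLS without radial symmetry; once this is granted, the remainder is a verbatim transcription of the proof of Theorem \ref{semir} with $H^1_{rad}(\R)$ replaced by $H^1(\R)$ and translations reinstated. I would therefore isolate the global existence statement as a standalone input (pointing to the Appendix) and then devote a single short paragraph to the contradiction argument above.
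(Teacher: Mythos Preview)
Your proposal is correct and follows essentially the same route as the paper: the corollary is obtained by observing that the variational part of Theorem \ref{semir} (local minimum geometry and compactness of minimizing sequences up to translations) is proved without any radiality assumption, and that the only missing ingredient---global existence for the $1$-$d$ quartic sNLS in $H^1(\R)$ without radiality, i.e.\ the discharge of \eqref{**}---is supplied by the adaptation of \cite{OV}; the stability then follows from the Cazenave--Lions scheme. One small imprecision: the Appendix only yields global existence for data in $H^{3/2}(\R)$, not $H^1(\R)$, so for the $H^1$ statement you should point to the \cite{OV} adaptation rather than the Appendix.
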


We point out that the {\em weak orbital stability} stated in Theorem \ref{semir} under the condition \eqref{**},
as well as in Corollary \ref{corwos},
is a byproduct of the general Cazenave-Lions strategy (see \cite{CL}), once
the following compactness property (where we don't assume any radiality assumption)
is established:
$$u_k\in S_r\cap B_1, \quad {\mathcal E}_s (u_k) \rightarrow {\mathcal J}_r
\Rightarrow \exists x_k\in \R^n \hbox{ s.t. }$$$$
u_k(x+x_k)\hbox{ has a strong limit in } H^{1/2}(\R^n).$$
The main difficulty here being the fact that we have to deal with a
local minimization problems (since the global minimization problem is meaningless, see
\eqref{...}) and hence the application of the concentration-compactness
argument is much more delicate.
We also underline that if we look at the same minimization problems as above,
under the extra radiality assumption (namely $u_k(x)=u_k(|x|)$, then the compactness stated above occurs without any selection of the translation parameters $x_k$.

We would like to mention that at the best of our knowledge this is the first result
about translation invariant equations, where stable solitary waves are proved to exist
in the $L^2$-supercritical regime.
\\
In order to state our last result about existence/instability of ground states for HW,
we need to introduce also the following functional:
$${\mathcal P}(u)=\frac 12 \|u\|_{\dot H^{1/2}(\R^n)}^2- \frac{n(p-1)}{2(p+1)}\| u  \|^{p+1}_{L^{p+1}(\R^n)},$$
and the corresponding
set:
\begin{equation}\label{mathcalM}{\mathcal M}=\big \{u\in H^{1/2}(\R^n) \hbox{ s.t. }  {\mathcal P}(u)=0 \big \}.\end{equation}
It is well known (see \cite{secchi})  that we have the following inclusion
$$\{w\in S_r \hbox{ s.t. } {\mathcal E}_{hw}'|_{S_r}=0\}\subset \mathcal M,$$
namely every critical point of the energy ${\mathcal E}_{hw}$
on the constraint $S_r$ belongs to the set $\mathcal M$.
It is worth mentioning that this fact
is reminiscent of
the Pohozaev identity, which is here adapted to the case of HW.
The following minimization problem will be crucial in the sequel:
$${\mathcal I}_r=\inf_{S_r\cap {\mathcal M}} \mathcal E_{hw}(u).$$
\begin{thm}\label{HWar}
Let $n\geq 1$ and $1+\frac 2n<p<1+\frac 2{n-1}$. Then for every $r>0$
we have:
\begin{itemize}
\item ${\mathcal I}_r>-\infty$ and ${\mathcal A}_r \neq \emptyset$, where $${\mathcal A}_r:=\{v\in S_r\cap {\mathcal M}\hbox{ s.t. }  {\mathcal E}_{hw}(v)={\mathcal I}_r\}.$$
Moreover any $v\in {\mathcal A}_r$ satisfies
$$\sqrt{-\Delta} v + \omega v-v|v|^{p-1}=0$$
for a suitable $\omega\in \R$;
\item assume $f(x)\in S_r\cap H^1_{rad}(\R^n)$
satisfies ${\mathcal E}_{hw} (f)< {\mathcal I}_r$ and ${\mathcal P} (f)<0$, $n\geq 2$
and $u(t, x)$ is solution to \eqref{evolution1} (where $A=\sqrt{-\Delta}$),
 then the following alternative holds: either the solution blows-up in finite time or
$\|u(t,x)\|_{\dot H^{1/2}(\R^n)}\geq e^{a t}$
for suitable $a>0$. In particular the set ${\mathcal A}_r$
is not {\em weakly orbitally stable} for the flow associated with HW.
\end{itemize}
\end{thm}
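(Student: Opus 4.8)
My plan is to treat the two bullets separately, dealing first with the (easier) existence of minimizers and then with the dynamical instability, which is the heart of the statement.

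\emph{Existence of $\mathcal A_r$ and the Euler--Lagrange equation.} First I would record that on the constraint $\mathcal M$ the nonlinear term is slaved to the kinetic one: since $\mathcal P(u)=0$ forces $\frac1{p+1}\|u\|_{L^{p+1}}^{p+1}=\frac{1}{n(p-1)}\|u\|_{\dot H^{1/2}}^2$, one gets for $u\in S_r\cap\mathcal M$
\begin{equation*}
\mathcal E_{hw}(u)=\Big(\tfrac12-\tfrac{1}{n(p-1)}\Big)\|u\|_{\dot H^{1/2}(\R^n)}^2+\tfrac r2 ,
\end{equation*}
and the prefactor is strictly positive precisely because $p>1+\frac2n$. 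Hence $\mathcal I_r\geq\frac r2>-\infty$, and minimizing $\mathcal E_{hw}$ on $S_r\cap\mathcal M$ is equivalent to minimizing $\|u\|_{\dot H^{1/2}}$ there. A minimizing sequence is therefore bounded in $H^{1/2}(\R^n)$; moreover the fractional Gagliardo--Nirenberg inequality together with $\mathcal P=0$ gives a lower bound $\|u\|_{\dot H^{1/2}}\geq c_0>0$ on $S_r\cap\mathcal M$, so the sequence cannot vanish. I would then run the standard concentration--compactness dichotomy (ruling out dichotomy by strict subadditivity of $r\mapsto\mathcal I_r$, as in \cite{CL,BFV}) to obtain, up to translations, a strong $H^{1/2}$ limit $v\in S_r\cap\mathcal M$ with $\mathcal E_{hw}(v)=\mathcal I_r$. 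Finally $v$ satisfies $\mathcal E_{hw}'(v)=\omega v+\mu\,\mathcal P'(v)$ for Lagrange multipliers $\omega,\mu$; testing against the generator $\Lambda v=\frac n2 v+x\cdot\nabla v$ of the $L^2$-preserving dilation and using $\langle v,\Lambda v\rangle=0$ together with $\langle \mathcal P'(v),\Lambda v\rangle=\frac{d}{d\lambda}\mathcal P(v_\lambda)|_{\lambda=1}\neq0$ (again by supercriticality) forces $\mu=0$, giving $\sqrt{-\Delta}v+\omega v-v|v|^{p-1}=0$.

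\emph{Invariant set and a uniform Pohozaev gap.} For the instability I would first show that $K=\{u\in S_r:\ \mathcal E_{hw}(u)<\mathcal I_r,\ \mathcal P(u)<0\}$ is invariant under the HW flow. Indeed mass and energy are conserved, so if $\mathcal P(u(t))$ ever vanished then $u(t)\in S_r\cap\mathcal M$ and $\mathcal E_{hw}(u(t))\geq\mathcal I_r$, contradicting $\mathcal E_{hw}(u(t))=\mathcal E_{hw}(f)<\mathcal I_r$; by continuity $\mathcal P(u(t))<0$ persists. Next, using only the conservation laws and the definitions, one gets the affine identity
\begin{equation*}
\mathcal P(u(t))=c_1-c_2\,\|u(t)\|_{\dot H^{1/2}(\R^n)}^2,\qquad c_2=\tfrac{n(p-1)}{4}-\tfrac12>0 ,
\end{equation*}
where $c_1$ depends only on $\mathcal E_{hw}(f)$ and $r$. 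I would then upgrade $\mathcal P<0$ to a \emph{uniform} gap $\mathcal P(u(t))\leq-\delta_0<0$: if $\mathcal P(u(t_k))\to0^-$ along some times, then $\|u(t_k)\|_{\dot H^{1/2}}$ stays bounded, the dilation projection $\lambda^*(u(t_k))$ (the unique $\lambda$ with $\mathcal P((u(t_k))_\lambda)=0$) tends to $1$, hence $\mathcal E_{hw}((u(t_k))_{\lambda^*})\to\mathcal E_{hw}(f)<\mathcal I_r$, contradicting $\mathcal E_{hw}|_{S_r\cap\mathcal M}\geq\mathcal I_r$.

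\emph{Localized virial and the dichotomy.} The dynamical core is a localized virial identity for $\sqrt{-\Delta}$. With $\varphi_R(x)=R^2\varphi(x/R)$, $\varphi$ radial, equal to $|x|^2/2$ near the origin and with bounded derivatives, I would set $\mathcal V_R(t)$ to be the corresponding symmetrized dilation functional, for which $|\mathcal V_R(t)|\lesssim R\,\sqrt r\,\|u(t)\|_{\dot H^{1/2}}$ and
\begin{equation*}
\tfrac{d}{dt}\mathcal V_R(t)\le 2\,\mathcal P(u(t))+\mathrm{Err}(R,t).
\end{equation*}
Here radial symmetry and $n\geq2$ are used to estimate the nonlocal commutator and nonlinear remainders through radial Sobolev embeddings. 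Combining this with $\mathcal P\le-\delta_0$ and the affine identity above gives a closed differential inequality for $\mathcal V_R$ in terms of $\|u(t)\|_{\dot H^{1/2}}^2$. When the remainder is negligible the inequality closes to finite-time blow-up (Glassey-type); the point special to the half-wave (the order $s=\frac12$ is exactly borderline for the virial method) is that $\mathrm{Err}(R,t)$ sits at the \emph{same} scale as the main term, so one cannot in general force finite-time blow-up. Carrying the competition carefully — optimizing the radius $R$ and feeding the growth of $\|u\|_{\dot H^{1/2}}$ back into the lower bound $|\mathcal P|\gtrsim\|u\|_{\dot H^{1/2}}^2$ — one arrives at the stated alternative: either the maximal time is finite, or, for globally defined solutions, a Gronwall argument yields $\|u(t)\|_{\dot H^{1/2}}\geq e^{at}$. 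I expect \textbf{this step} — making the localized virial identity rigorous for $\sqrt{-\Delta}$ and controlling its critical-order error terms in the radial setting so as to extract the exponential rate — to be the main obstacle.

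\emph{Instability of $\mathcal A_r$.} Finally, to deduce that $\mathcal A_r$ is not weakly orbitally stable I would perturb a radial ground state $v\in\mathcal A_r\cap H^1_{rad}(\R^n)$ by the $L^2$-preserving scaling $v_\lambda=\lambda^{n/2}v(\lambda\cdot)$ with $\lambda>1$. Since $v\in\mathcal M$ is the maximizer of $\lambda\mapsto\mathcal E_{hw}(v_\lambda)$, for $\lambda>1$ one has $\mathcal P(v_\lambda)<0$ and $\mathcal E_{hw}(v_\lambda)<\mathcal I_r$, while $v_\lambda\to v$ in $H^{1/2}$ as $\lambda\to1^+$. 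Thus $v_\lambda\in K\cap H^1_{rad}(\R^n)$ is arbitrarily $H^{1/2}$-close to $\mathcal A_r$, and by the dichotomy the corresponding solution either fails to be global or satisfies $\|u(t)\|_{\dot H^{1/2}}\ge e^{at}\to\infty$; since $\mathcal A_r$ is bounded in $H^{1/2}$, this makes $\sup_t\mathrm{dist}_{H^{1/2}}(u(t),\mathcal A_r)$ infinite, contradicting Definition \ref{wos}.
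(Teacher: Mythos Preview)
Your proposal is essentially correct and follows the same overall architecture as the paper (variational existence, invariance of $K$, uniform Pohozaev gap, localized virial, instability via scaling), but the execution differs in two places worth flagging.

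For the existence of $\mathcal A_r$ you propose to run concentration--compactness directly on $S_r\cap\mathcal M$ with strict subadditivity of $r\mapsto\mathcal I_r$. The paper instead minimizes $\mathcal E_{hw}$ on $\mathcal M$ \emph{without} the mass constraint: it passes to radial minimizing sequences by Schwarz symmetrization (checking that the projection back onto $\mathcal M$ is asymptotically trivial), exploits the compact embedding $H^{1/2}_{rad}\hookrightarrow L^{p+1}$ to avoid concentration--compactness altogether, and then reaches arbitrary $r$ by the scaling $v\mapsto\omega^{1/(p-1)}v(\omega\,\cdot)$, which preserves $\mathcal M$ and turns the equation with frequency $1$ into the one with frequency $\omega$. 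Your route is in principle viable, but you would still owe the subadditivity argument; the paper's route trades that for the symmetrization step. Your elimination of the second Lagrange multiplier by testing against $\Lambda v$ is cleaner than the paper's version, which obtains three scalar identities (from multiplying the equation by $v$, from $\mathcal P(v)=0$, and from differentiating the energy along the $\mathcal M$-preserving scaling $\mu^{1/(p-1)}v(\mu\,\cdot)$) and solves for $\lambda=0$.

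Your description of the virial step has one mischaracterization that you should correct. The error in $\frac{d}{dt}M_{\varphi_R}(u)\le 4\mathcal P(u)+\mathrm{Err}$ is \emph{not} at the same scale as the main term. The paper controls the exterior nonlinear piece by Rubin's weighted inequality for radial functions (a sharpened Strauss decay), obtaining
\[
\mathrm{Err}\lesssim R^{-1}+R^{\,\frac{p(-2n+1)+2n+1}{4}}\,\|u\|_{L^2}^{(p+1)/2}\|u\|_{\dot H^{1/2}}^{(p+1)/2},
\]
with a strictly negative $R$-exponent and, crucially, $(p+1)/2<2$ in the subcritical range $p<1+\frac{2}{n-1}$. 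Together with the lower bound $\inf_t\|u(t)\|_{\dot H^{1/2}}>0$ (which you also derive), this lets the error be absorbed into $\alpha\|u\|_{\dot H^{1/2}}^2$ for $R$ large and $\alpha$ small, yielding $\frac{d}{dt}M_{\varphi_R}\le -\alpha\|u\|_{\dot H^{1/2}}^2$. Combined with the quadratic bound $|M_{\varphi_R}(u)|\le C(R)\|u\|_{\dot H^{1/2}}^2$ (rather than the linear bound you wrote), a Gronwall-type integral inequality gives the exponential lower bound directly; the blow-up alternative only enters because the local theory may not be global. So the point is not a delicate ``critical-order'' competition but a genuine subcritical gain coming from the restriction $p<1+\frac{2}{n-1}$.
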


Notice that in the first part of the statement, which is mostly variational, we don't assume the radial symmetry. On the contrary in the statement about the
evolution along the Cauchy problem we assume the radiality. This is mainly due to the fact that
at the best of our knowledge no global Cauchy theory is available
without the radiality assumption.
\\

We also underline that our approach  to prove the second part of Theorem \ref{HWar},
namely the norm inflation, is inspired by the work
of  Ogawa-Tsutsumi \cite{OZ} that was based, in the context of the classical NLS,
on the analysis of
time derivative of the localized virial
$$M_{\varphi_R}(u)=2 \text{ Im} \int \bar u \nabla \varphi_R \cdot \nabla u dx
$$
where $\varphi_R$ is a rescaled cut-off function such that $\nabla \varphi_R(x) \equiv x$ for $|x|\leq R$
and $\nabla \varphi_R(x) \equiv 0$ for $|x|>>R$.
This  approach has been further extended
by  Boulenger-Himmelsbach-Lenzmann \cite{BHL}
in the non-local context with dispersion $(-\Delta)^{s}$ for $\frac 12<s<1$.
In this paper we shall take advantage of similar computations
in the case of HW.
\\
We point out that the discrepancy between the dynamics for
sNLS and HW, revealed by  Theorems \ref{semir} and \ref{HWar}
about the stability/instability of ground states (namely ${\mathcal B}_r$
and ${\mathcal A}_r$) in the $L^2$ supercritical regime,
is reminiscent of the results of \cite{FL, FL2} for the dispersive equation describing a Boson Star:
\begin{equation*}\label{evolutionFL}
\im\partial_{t} u= \sqrt{m^2 -\Delta}u  -\left( \frac{1}{|x|}\star|u|^2\right)u,
\quad
(t,x)\in \R \times \R^{3}.
\end{equation*}
Indeed, in \cite{FL} it has been proved that ground states are unstable by blow up if $m=0$, while in \cite{FL2} it is shown that the ground states are orbitally stable whenever $m>0$.
However our situation is rather different from the one describing a Boson Star,
in fact in our case the constrained energy functional is always unbounded from below for any assigned $L^2$ constraint.
\\
\begin{figure}\label{fig2}
	\begin{center}
	{\includegraphics[width=8cm,height=5cm]{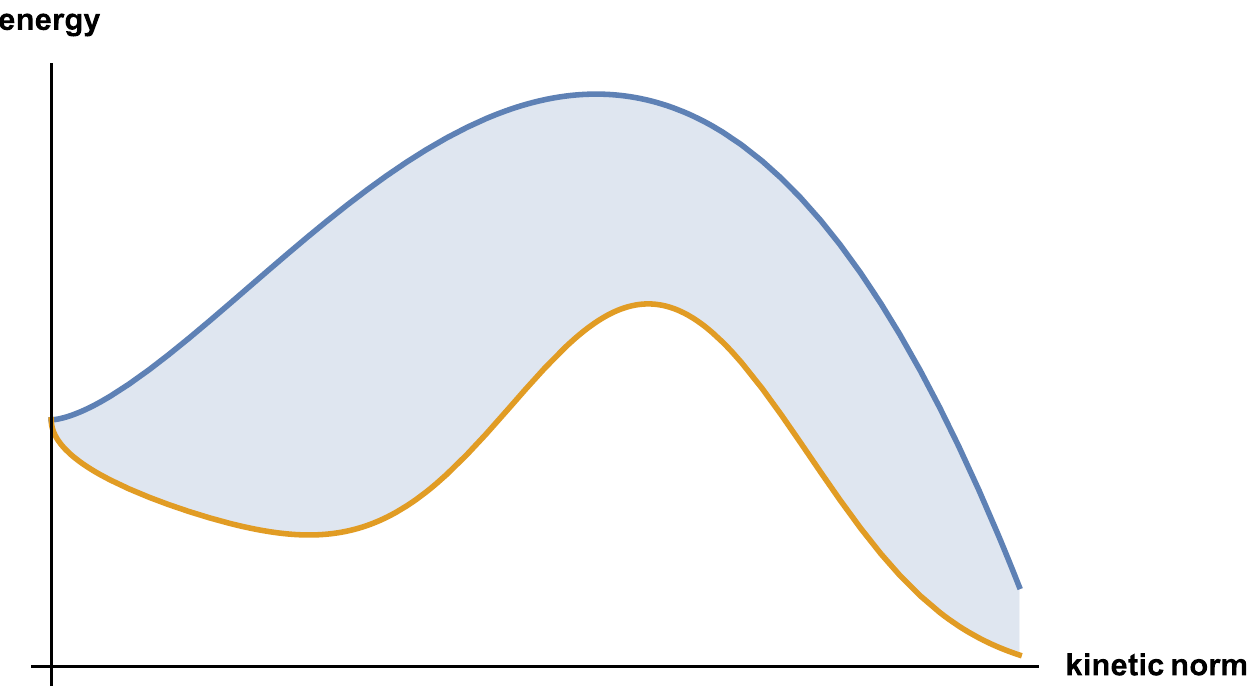}}
	\end{center}
\caption{Qualitative behavior of the constrained energy functionals associated with sNLS and HW.   In this qualitative picture the kinetic energy is given by  $\|u\|_{\dot H^{1/2}(\R^n)}$.}
\end{figure}	
\medskip
We conclude with a picture showing the main difference between
the functionals ${\mathcal E}_{hw}$ and ${\mathcal E}_{s}$ revealed
by Theorems \ref{semir} and \ref{HWar}, in the $L^2$ supercritical regime. In fact in the first case  we have established the stability of the ground states, and in
the second case we have proved on the contrary its instability.
For the HW (upper curve in the figure) the functional ${\mathcal E}_{hw}$ admits a critical point of mountain pass type. For  sNLS (lower curve in the figure) we have the existence of a
local minimizer for ${\mathcal E}_{s}$.


\section{The Cauchy theory for HW and sNLS}

The aim of this section it to prove a local/global existence and uniqueness result
for the Cauchy problem \eqref{evolution1}.
We need several tools that we shall exploit along the proof.
We treat in some details the result for the HW,
and we say at the end how to transfer the results
at the level of sNLS.
As usual we shall look for fixed point of the integral operator
associated with the Cauchy problem for HW:
\begin{equation}\label{eq.WF}
   S_f (u) = e^{-\im t \sqrt{-\Delta}} f +\im  \int_0^t  e^{-\im (t-\tau) \sqrt{-\Delta} }
  u(\tau)|u(\tau)|^{p-1}  d\tau.
\end{equation}
where $f(x)\in H^1_{rad}(\R^n)$.
We perform a fixed point argument in a suitable space
$X_T\subset C([0, T]; H^1_{rad}(\R^n))$
that, as we shall see below, is provided by an interpolation between a Kato-smoothing type estimate and the usual energy estimates.

\subsection{A Brezis-Gallou\"et-Strauss Type Inequality in $H^{1/2+\epsilon}_{rad}(\R^n)$}

In this subsection we introduce two functional inequalities
that will be useful respectively to achieve the local Cauchy theory and the globalization argument,
following in the spirit the paper by Strauss (see \cite{S77}) and
Brezis-Gallou\"et
(see \cite{BrGa}).
\begin{prop} \label{BG1l}
For every $n\geq 2$ and $s>1/2$ there exists a constant $C=C(s,n)>0$ such that:
\begin{equation}\label{trig}\||x|^{\frac{n-1}2} u\|_{L^\infty(\R^n)}\leq C \|u\|_{H^s(\R^n)};\end{equation}
\begin{equation}\label{brgal}
\||x|^{\frac{n-1}2} u\|_{L^\infty(\R^n)}\leq C \|u\|_{H^{1/2}(\R^n)}
\sqrt {\ln \left(2+ \frac{\|u\|_{H^s(\R^n)}}{\|u\|_{H^{1/2}(\R^n)}}\right)},\end{equation}
for every $u\in H^s_{rad}(\R^n)$.
\end{prop}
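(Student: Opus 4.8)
The plan is to reduce the $n$-dimensional radial estimate to a one-dimensional integral against a Bessel kernel, and then to run a Brezis--Gallou\"et type splitting on the Fourier side. Throughout we use that $u$ is radial.

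First I would pass to the radial profile $u_0(|x|)=u(x)$ and exploit that $\hat u$ is radial as well, $\hat u(\xi)=\hat u_0(|\xi|)$, together with the Hankel inversion formula
$$u_0(r)=c_n\, r^{-\nu}\int_0^\infty \hat u_0(\rho)\,J_\nu(r\rho)\,\rho^{\nu+1}\,d\rho,\qquad \nu=\tfrac{n-2}{2},$$
where $J_\nu$ is the Bessel function of the first kind. Setting $g(\rho):=\rho^{(n-1)/2}\hat u_0(\rho)$ and using the two elementary identities $\tfrac{n-1}{2}-\nu=\tfrac12$ and $-\tfrac{n-1}{2}+\nu+1=\tfrac12$, this rearranges to
$$r^{(n-1)/2}u_0(r)=c_n\int_0^\infty \Phi(r\rho)\,g(\rho)\,d\rho,\qquad \Phi(t):=t^{1/2}J_\nu(t).$$
Writing the Sobolev norms of $u$ in polar coordinates on the frequency side gives, for every $\sigma\ge 0$, the identities $\|g\|_{L^2(0,\infty)}=c_n'\|u\|_{L^2(\R^n)}$ and $\|(1+\rho^2)^{\sigma/2}g\|_{L^2(0,\infty)}=c_n'\|u\|_{H^\sigma(\R^n)}$; thus the weighted $L^2$ norms of $g$ encode the $H^{1/2}$ and $H^s$ norms of $u$.

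The key step is the uniform boundedness of the kernel, $M:=\sup_{t\ge 0}|\Phi(t)|<\infty$. Since $n\ge 2$ we have $\nu\ge 0$, so the small-argument behaviour $J_\nu(t)\sim c\,t^\nu$ yields $\Phi(t)\sim c\,t^{\nu+1/2}\to 0$ as $t\to0^+$, while the large-argument asymptotics $J_\nu(t)=\sqrt{2/(\pi t)}\,\cos(t-\nu\pi/2-\pi/4)+O(t^{-3/2})$ give $\Phi(t)=\sqrt{2/\pi}\,\cos(t-\nu\pi/2-\pi/4)+O(t^{-1})$, which remains bounded; continuity of $\Phi$ on the intermediate compact range closes the claim. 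Consequently
$$\big\|\,|x|^{(n-1)/2}u\,\big\|_{L^\infty(\R^n)}=\sup_{r>0}\big|r^{(n-1)/2}u_0(r)\big|\le c_n M\,\|g\|_{L^1(0,\infty)},$$
so both inequalities are reduced to bounding $\|g\|_{L^1(0,\infty)}$ by the weighted $L^2$ norms of $g$. For \eqref{trig}, Cauchy--Schwarz against the weight $(1+\rho^2)^{s/2}$ gives $\|g\|_{L^1}\le \|(1+\rho^2)^{s/2}g\|_{L^2}\big(\int_0^\infty(1+\rho^2)^{-s}d\rho\big)^{1/2}$, and the last integral converges precisely because $s>1/2$, yielding $\||x|^{(n-1)/2}u\|_{L^\infty}\le C\|u\|_{H^s}$.

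For the refined bound \eqref{brgal} I would split $\int_0^\infty|g|=\int_0^K+\int_K^\infty$. On $[0,K]$, Cauchy--Schwarz against the weight $(1+\rho^2)^{1/4}$ gives $\int_0^K|g|\le\|u\|_{H^{1/2}}\big(\int_0^K(1+\rho^2)^{-1/2}d\rho\big)^{1/2}\lesssim \|u\|_{H^{1/2}}\sqrt{\ln(2+K)}$, using $\int_0^K(1+\rho^2)^{-1/2}d\rho=O(\ln(2+K))$. On $[K,\infty)$, Cauchy--Schwarz against $(1+\rho^2)^{s/2}$ gives $\int_K^\infty|g|\lesssim\|u\|_{H^s}\,K^{-(s-1/2)}$. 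Choosing $K=\big(\|u\|_{H^s}/\|u\|_{H^{1/2}}\big)^{1/(s-1/2)}$ balances the tail term against $\|u\|_{H^{1/2}}$ and produces exactly the logarithmic factor $\sqrt{\ln\big(2+\|u\|_{H^s}/\|u\|_{H^{1/2}}\big)}$; by homogeneity one may normalize $\|u\|_{H^{1/2}}=1$ to make this optimization transparent. The main obstacle is the analytic groundwork rather than these final estimates: one must justify the Hankel representation (first for Schwartz radial functions, then by density) and, above all, establish the uniform bound $\sup_{t\ge 0}|t^{1/2}J_\nu(t)|<\infty$ for $\nu=(n-2)/2\ge 0$, which is where the dimensional restriction $n\ge 2$ and the exact power $|x|^{(n-1)/2}$ enter. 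Once the kernel is controlled, the Cauchy--Schwarz bounds and the choice of $K$ are routine.
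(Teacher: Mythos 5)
Your proof is correct, but it takes a genuinely different route from the paper's. The paper argues via the Littlewood--Paley decomposition $u=\sum_{j\geq 0}u_j$ combined with the block estimate $\||x|^{(n-1)/2}u_j\|_{L^\infty(\R^n)}\leq C\|u_j\|_{H^{1/2}(\R^n)}$, which it imports from Sickel--Skrzypczak as an extension of Strauss's radial lemma; then \eqref{trig} follows by Minkowski plus Cauchy--Schwarz over $j$, and \eqref{brgal} by cutting the dyadic sum at an index $M$, bounding the head by $C\sqrt{M}\,\|u\|_{H^{1/2}(\R^n)}$ and the tail by an exponentially decaying multiple of $\|u\|_{H^s(\R^n)}$, and optimizing $M$. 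You instead work with the continuous frequency variable through the Hankel representation of radial functions, reduce both inequalities to an $L^1$ bound on $g(\rho)=\rho^{(n-1)/2}\hat u_0(\rho)$ via the uniform kernel bound $\sup_{t\geq 0}|t^{1/2}J_\nu(t)|<\infty$ for $\nu=(n-2)/2\geq 0$, and then run the same Brezis--Gallou\"et splitting with a continuous cutoff $K$ in place of the dyadic index $M$. What your approach buys is self-containedness: the kernel bound is an elementary consequence of Bessel asymptotics and in effect re-proves the Strauss-type block estimate that the paper cites from the literature; the price is the analytic groundwork you correctly identify (Hankel inversion for Schwartz radial functions plus density, and the Bessel asymptotics). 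The paper's dyadic formulation is shorter given the cited estimate and sits naturally in the Besov-scale framework; your version makes the dependence on $n$ through $\nu$ completely explicit. Your choice $K=\bigl(\|u\|_{H^s}/\|u\|_{H^{1/2}}\bigr)^{1/(s-1/2)}$ produces the stated logarithm after absorbing the exponent $1/(s-1/2)$ into the constant $C(s,n)$, exactly parallel to the paper's choice of $M$.
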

\begin{proof}
In the radial case it is well-known the Strauss estimate
(\cite{S77})
\begin{equation*}\label{eq.BGS}
   |x|^\frac{n-1}2 |u(x)| \leq C \|f\|_{H^1(\R^n)}, \quad \forall u\in H^1_{rad}(\R^n)
\end{equation*}
that has been extended in \cite{SS} to
\begin{equation}\label{eq.BGS1}
    |x|^\frac{n-1}2 |u_j(x)| \leq C \|u_j\|_{H^{1/2}(\R^n)}, \quad \forall u\in H^{1/2}_{rad}(\R^n), \quad \forall j\geq 0.
\end{equation}
Here we use the notation
$$ u_j = \varphi_j \left(\sqrt{-\Delta} \right) u, \quad \forall j \geq 0, $$
and $\varphi_j(s) $ is
the usual Paley-Littlewood decomposition, namely
$\varphi_j(s) \in C^\infty_0((0,\infty))$ are non-negative function supported in $[2^{j-1},2^{j+1}],$ such that:
$$ \sum_{j\geq 0} \varphi_j(s) = 1, \quad \forall s \geq 0.$$
Notice that the first estimate \eqref{trig} follows by decomposing
$u(x)=\sum_{j\geq 0} u_j(x)$ and by noticing that by Minkowski
inequality and \eqref{eq.BGS1}
$$\||x|^\frac{n-1}2 u\|_{L^\infty(\R^n)}\leq \sum_j \||x|^\frac{n-1}2 u_j\|_{L^\infty(\R^n)}
\leq C \sum_j \|u_j\|_{H^{1/2}(\R^n)}\leq C \|u\|_{H^s(\R^n)}$$
where at the last step we have used
the Cauchy-Schwartz inequality and the assumption $s>1/2$.

Concerning the proof of \eqref{brgal} we refine the argument above
as follows:

$$ \||x|^\frac{n-1}2  u\|_{L^\infty(\R^n)}   \leq C \sum_{j=0}^\infty  \|u_j\|_{H^{1/2}(\R^n)}
 = \underbrace{\sum_{j=0}^M  \|u_j\|_{H^{1/2}(\R^n)}}_{S_1(M)} + \underbrace{\sum_{j=M+1}^\infty  \|u_j\|_{H^{1/2}(\R^n)}}_{S_2(M)}$$ with $M$ being sufficiently large integer. We can estimate these two terms by Cauchy-Schwartz
as follows:
$$ S_1(M) \leq C \sqrt { M} \|u\|_{H^{1/2}(\R^n)},$$
$$ S_2(M)\leq C 2^{-M(1/4+s/2))} \|u\|_{H^{s}(\R^n)}$$
so we get
$$ \||x|^\frac{n-1}2 u(x)\|_\infty \leq C \sqrt{M} \|u\|_{H^{1/2}(\R^n)} + C 2^{-M(1/4+s/2)} \|u\|_{H^{s}(\R^n)}.$$ We conclude by choosing:
$$ M = \ln \big (2 + \frac{\|u\|_{H^{s}(\R^n)}}{\|u\|_{H^{1/2}(\R^n)}}\big ).$$

\end{proof}




\subsection{Energy Estimates and Kato Smoothing}

Next proposition is the key estimate for the linear propagator,
that will suggest
the space $X_T$ where to perform a fixed point argument.
In the sequel we shall use the notation
$$[x]_\delta=|x|^{1+\delta}+|x|^{1-\delta}.$$
\begin{prop}\label{freeev} Let $\delta>0$ be fixed.
We have the following bound
$$\|[x]_\delta^{-\frac 1q} e^{\im t \sqrt{-\Delta}} f\|_{L^q(\R; L^2(\R^n))}
\leq C \|f\|_{L^2(\R^n)}$$
for every $q\in [2, \infty]$ and $C>0$ is an universal constant that does not depend
on $q$.
\end{prop}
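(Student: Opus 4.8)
The plan is to reduce the full range $q\in[2,\infty]$ to the two endpoints $q=\infty$ and $q=2$ and then fill in the interior by an elementary Hölder interpolation; all the analytic content sits at $q=2$. For $q=\infty$ the weight $[x]_\delta^{-1/q}$ degenerates to $1$, so the claim is just
$$\sup_t\|e^{\im t\sqrt{-\Delta}}f\|_{L^2(\R^n)}=\|f\|_{L^2(\R^n)},$$
which is the unitarity of the half–wave propagator on $L^2$. Thus it remains to prove the Kato-smoothing endpoint
$$\int_\R\int_{\R^n}[x]_\delta^{-1}\big|e^{\im t\sqrt{-\Delta}}f\big|^2\,dx\,dt\le C_2^2\|f\|_{L^2(\R^n)}^2 .$$

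To establish this I would pass to the Fourier side and integrate in time first. Writing $u(t,x)=e^{\im t\sqrt{-\Delta}}f$, one has $\widehat{u(t,\cdot)}(\xi)=e^{\im t|\xi|}\hat f(\xi)$, and decomposing the inverse transform into spheres gives (up to harmless constants)
$$u(t,x)=\int_0^\infty e^{\im t\rho}\,G_x(\rho)\,d\rho,\qquad G_x(\rho)=\int_{|\xi|=\rho}e^{\im x\cdot\xi}\hat f(\xi)\,d\sigma_\rho(\xi).$$
Since, for fixed $x$, $u(t,x)$ is the inverse Fourier transform in $t$ of $G_x(\rho)$ supported in $\{\rho>0\}$, Plancherel in $t$ followed by Fubini yields
$$\int_\R\int_{\R^n}[x]_\delta^{-1}|u|^2\,dx\,dt= c\int_0^\infty\Big(\int_{\R^n}[x]_\delta^{-1}\big|\widehat{\hat f\,d\sigma_\rho}(x)\big|^2\,dx\Big)d\rho,$$
where $\widehat{\hat f\,d\sigma_\rho}=G_x(\rho)$ is the Fourier extension of the restriction of $\hat f$ to the sphere of radius $\rho$. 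By the coarea formula $\int_0^\infty\!\int_{|\xi|=\rho}|\hat f|^2\,d\sigma_\rho\,d\rho=\|f\|_{L^2}^2$, so the whole estimate follows from the single weighted restriction bound, \emph{uniform in} $\rho>0$,
$$\int_{\R^n}[x]_\delta^{-1}\big|\widehat{g\,d\sigma_\rho}(x)\big|^2\,dx\le C\int_{|\xi|=\rho}|g|^2\,d\sigma_\rho .$$

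This last inequality is where the shape of $[x]_\delta$ is decisive. The Agmon–Hörmander (trace) estimate gives, for the unit sphere, the scale-uniform annular bound $\sup_{R>0}R^{-1}\int_{|x|\le R}|\widehat{g\,d\sigma_1}|^2\,dx\lesssim\|g\|_{L^2(d\sigma_1)}^2$. Summing this dyadically against the weight $[x]_\delta^{-1}$ produces a geometric series converging at both ends — the factor $|x|^{1-\delta}$ tames the origin and $|x|^{1+\delta}$ the spatial infinity — which is exactly why the scale-invariant weight $|x|^{-1}$ (logarithmically divergent) is replaced by $[x]_\delta^{-1}$. Rescaling $\xi=\rho\eta$, $x=y/\rho$ transfers the unit-sphere bound to radius $\rho$, and the two inhomogeneous exponents $1\mp\delta$ are precisely what make the resulting constant independent of $\rho$. (Equivalently, one may run Kato's smoothing criterion, reducing to a uniform limiting-absorption bound for $[x]_\delta^{-1/2}(\sqrt{-\Delta}-\lambda\mp\im 0)^{-1}[x]_\delta^{-1/2}$ via $(\sqrt{-\Delta}-z)^{-1}=(\sqrt{-\Delta}+z)(-\Delta-z^2)^{-1}$.) \textbf{I expect this uniformity in the sphere radius $\rho$ to be the main obstacle}, and it is the sole reason for the two-exponent weight.

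Finally I interpolate to general $q\in(2,\infty)$ with a $q$-independent constant. For each fixed $t$, Hölder's inequality in $x$ with exponents $q/2$ and $q/(q-2)$ and the factorization $[x]_\delta^{-2/q}=([x]_\delta^{-1})^{2/q}\cdot 1^{1-2/q}$ give
$$\int_{\R^n}[x]_\delta^{-2/q}|u(t,x)|^2\,dx\le\Big(\int_{\R^n}[x]_\delta^{-1}|u(t,x)|^2\,dx\Big)^{2/q}\|u(t,\cdot)\|_{L^2}^{2(1-2/q)} .$$
Using mass conservation $\|u(t,\cdot)\|_{L^2}=\|f\|_{L^2}$, raising to the power $q/2$, integrating in $t$, and invoking the $q=2$ bound, one finds
$$\|[x]_\delta^{-1/q}u\|_{L^q(\R;L^2)}^q\le\|f\|_{L^2}^{q-2}\int_\R\int_{\R^n}[x]_\delta^{-1}|u|^2\,dx\,dt\le C_2^2\,\|f\|_{L^2}^q .$$
Hence $\|[x]_\delta^{-1/q}u\|_{L^q(\R;L^2)}\le C_2^{2/q}\|f\|_{L^2}\le\max(1,C_2)\,\|f\|_{L^2}$, and since $q\ge2$ the exponent $2/q\in(0,1]$, so the constant $\max(1,C_2)$ is uniform in $q$, as required.
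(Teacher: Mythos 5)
Your argument is correct, but at the decisive $q=2$ endpoint it follows a genuinely different route from the paper's. The paper invokes Kato's abstract smoothing theory and reduces everything to a uniform weighted resolvent bound for $\sqrt{-\Delta}$, which it proves via the factorization $(\sqrt{-\Delta}-(\lambda+\im\epsilon))^{-1}=(\sqrt{-\Delta}+\lambda+\im\epsilon)\circ(-\Delta-(\lambda+\im\epsilon)^2)^{-1}$ combined with Agmon's limiting-absorption estimates for the Laplacian --- precisely the alternative you relegate to your parenthetical remark. You instead apply Plancherel in time, which converts the smoothing estimate into a weighted $L^2$ bound for the spherical extension operator $g\,d\sigma_\rho\mapsto\widehat{g\,d\sigma_\rho}$, uniform in $\rho$, and you obtain that from the Agmon--H\"ormander annular bound by dyadic summation against $[x]_\delta^{-1}$. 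One small remark on your order of operations: if you sum first on the unit sphere and then rescale, the weight does not return to $[x]_\delta^{-1}$ but becomes $\bigl(\rho^{-\delta}|y|^{1+\delta}+\rho^{\delta}|y|^{1-\delta}\bigr)^{-1}$; the dyadic sums are then a shifted geometric series $\sum_k 2^{-|k-\log_2\rho|\delta}$, still uniformly bounded in $\rho$, so your claim of uniformity survives --- alternatively, rescale the annular bound first (it is scale invariant) and sum once against the fixed weight. What your route buys: it is self-contained modulo the trace lemma, it makes explicit why both exponents $1\pm\delta$ are needed (convergence of the geometric series at the origin and at infinity, failing logarithmically for the pure weight $|x|^{-1}$), and your elementary H\"older interpolation between the endpoints actually proves the assertion that the constant is independent of $q$, a point the paper's one-line ``by interpolation'' leaves implicit. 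What the paper's route buys is brevity, at the cost of citing heavier machinery (Kato smoothing, Agmon's resolvent estimates) as black boxes.
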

\begin{proof}
By interpolation it is sufficient to treat $q=2, \infty$.
The case $q=\infty$ is trivial and follows by the isometry
$\| e^{\im t \sqrt{-\Delta} }f\|_{L^2(\R^n)}=\|f\|_{L^2(\R^n)}$.\\
The case $q=2$ follows by combining the Kato smoothing (see \cite{kato},
\cite{RS})
together with the following lemma that provides uniform weighted estimates
for the resolvent associated with $\sqrt{-\Delta}$.

\begin{lem} Let $\delta>0$ be fixed, then
we have the following uniform bounds:
$$\|[x]_\delta^{-\frac 12} (\sqrt{-\Delta}-(\lambda+\im \epsilon))^{-1} f\|_{L^2(\R^n)}
\leq C \|  [x]_\delta^{\frac 12} f\|_{L^2(\R^n)}$$
where $C>0$ does not depend on
$\lambda, \epsilon>0$.
\end{lem}
\begin{proof}
We have the following identity
$$(\sqrt{-\Delta}-(\lambda+\im \epsilon))^{-1}= (\sqrt{-\Delta}+\lambda+\im \epsilon) \circ (-\Delta-(\lambda+\im \epsilon)^2)^{-1}$$
and hence
the desired estimate follows by the following well-known estimates
available for the resolvent associated with the
Laplacian operator $-\Delta$ (see \cite{agmon}, \cite{RS4}):$$\|[x]_\delta^{-\frac 12} \sqrt{-\Delta} (-\Delta-(\lambda+\im \epsilon)^2)^{-1}
f\|_{L^2(\R^n)}
\leq C \|  [x]_\delta^{\frac 12} f\|_{L^2(\R^n)}$$
and
$$\|[x]_\delta^{-\frac 12} (-\Delta-(\lambda+\im \epsilon)^2)^{-1} f\|_{L^2(\R^n)}
\leq \frac{C}{|\lambda+\im \epsilon|} \|  [x]_\delta^{\frac 12} f\|_{L^2(\R^n)}.$$
\end{proof}
The proof of Proposition \ref{freeev} is complete.

\end{proof}
Next we present a-priori estimates
associated with the Duhamel operator.
\begin{prop}\label{duham} Let $\delta>0$ be fixed.
For every $q_1\in [2, \infty]$ and $q_2\in (2, \infty]$ we get
$$\|[x]_\delta^{-\frac 1{q_1}} \int_0^t e^{\im (t-s)\sqrt{-\Delta}} F(s) ds\|_{L^{q_1}(\R;L^2(\R^n))}
\leq C \|[x]_\delta^{\frac 1{q_2}} F\|_{L^{q_2'}(\R;L^2(\R^n))}.$$
\end{prop}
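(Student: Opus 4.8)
The plan is to reduce this retarded (Duhamel) estimate to the non-retarded propagator estimate already established in Proposition \ref{freeev} by a $TT^*$ argument, and then to reinstate the causal time truncation $\int_0^t$ by means of the Christ--Kiselev lemma. First I would encode Proposition \ref{freeev} as the statement that, for each $q\in[2,\infty]$, the operator
$$T_q:L^2(\R^n)\to L^q(\R;L^2(\R^n)),\qquad (T_qf)(t)=[x]_\delta^{-\frac1q}\,e^{\im t\sqrt{-\Delta}}f,$$
is bounded with a constant independent of $q$. Since $e^{\im t\sqrt{-\Delta}}$ is unitary on $L^2$ with adjoint $e^{-\im t\sqrt{-\Delta}}$, and $[x]_\delta^{-1/q}$ is a real, hence self-adjoint, multiplier, duality gives that
$$T_q^*:L^{q'}(\R;L^2(\R^n))\to L^2(\R^n),\qquad T_q^*G=\int_\R e^{-\im s\sqrt{-\Delta}}\,[x]_\delta^{-\frac1q}\,G(s)\,ds,$$
is bounded with the same constant.

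Next I would simply compose the two. Setting $G=[x]_\delta^{1/q_2}F$, a direct computation shows that
$$(T_{q_1}T_{q_2}^*)G(t)=\int_\R [x]_\delta^{-\frac1{q_1}}\,e^{\im(t-s)\sqrt{-\Delta}}\,[x]_\delta^{-\frac1{q_2}}\,G(s)\,ds,$$
which is exactly the \emph{non-retarded} version of the operator in the statement (with $\int_\R$ in place of $\int_0^t$). Boundedness of $T_{q_1}$ and of $T_{q_2}^*$ therefore yields at once
$$\Big\|[x]_\delta^{-\frac1{q_1}}\int_\R e^{\im(t-s)\sqrt{-\Delta}}F(s)\,ds\Big\|_{L^{q_1}(\R;L^2)}\le C\,\|[x]_\delta^{\frac1{q_2}}F\|_{L^{q_2'}(\R;L^2)},$$
the untruncated analogue of the claim.

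Finally, to recover the causal cut-off I would invoke the Christ--Kiselev lemma in its Banach-space-valued form: if an operator with kernel $K(t,s)$ (here the operator $L^2\to L^2$ given by $[x]_\delta^{-1/q_1}e^{\im(t-s)\sqrt{-\Delta}}[x]_\delta^{-1/q_2}$) is bounded from $L^a(\R;L^2)$ to $L^b(\R;L^2)$ with $a<b$, then its causal truncation $\int_{s<t}K(t,s)\,\cdot\,ds$ is bounded between the same spaces. Here $a=q_2'$ and $b=q_1$, so the required strict inequality $a<b$ reads $q_2'<q_1$; this is precisely the point where the hypothesis $q_2\in(2,\infty]$ is used, since it forces $q_2'<2\le q_1$. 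The truncation in the statement is $\int_0^t$ rather than $\int_{-\infty}^t$, which I would handle by writing $\int_0^t=\int_{-\infty}^t-\int_{-\infty}^0$: the first piece is the Christ--Kiselev causal truncation, while the second equals $T_{q_1}$ applied to the fixed vector $\int_{-\infty}^0 e^{-\im s\sqrt{-\Delta}}[x]_\delta^{-1/q_2}G(s)\,ds$, estimated directly by the already-proven $TT^*$ bound. I expect the genuine content to lie in this last step: one must make sure the kernel is well defined (a routine density argument), and the endpoint cases $q_1=\infty$ and $q_2=\infty$ must be allowed by the version of Christ--Kiselev invoked; the necessity of the strict gap $q_2'<q_1$ is exactly what excludes the forbidden endpoint $q_2=2$.
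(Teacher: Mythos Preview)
Your proposal is correct and follows essentially the same approach as the paper: a $TT^*$ argument built on Proposition~\ref{freeev}, followed by the Christ--Kiselev lemma to pass from the non-retarded integral to the truncated one, with the strict inequality $q_2'<q_1$ (hence $q_2>2$) being exactly the hypothesis needed for Christ--Kiselev. The only cosmetic difference is in how the lower limit $0$ is recovered: the paper first restricts to $F$ supported in $\{s>0\}$ to turn $\int_\R$ into $\int_0^\infty$ and then applies Christ--Kiselev, whereas you apply Christ--Kiselev first to get $\int_{-\infty}^t$ and then split off $\int_{-\infty}^0$; both are equally valid.
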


\begin{proof}
The proof follows by combining Proposition \ref{freeev}
with the $T T^*$ argument (see \cite{GV92})
in conjunction
with the Christ-Kiselev Lemma (see \cite{christ}).

More precisely let
$T$ be the following operator:
$$T:L^2(\R^n)\ni f\rightarrow e^{\im t\sqrt{-\Delta}} f\in X_q$$
where  $$\|G(t,x)\|_{X_q}
=\|[x]_\delta^{-\frac 1q} G(t,x)\|_{L^q(\R; L^2(\R^n))}.
$$
Notice that $T$ is continuous by Proposition \ref{freeev}, and hence
by a duality argument we get the continuity of the operator
$$T^*: Y_{q}\ni F(t,x)\rightarrow \int_\R e^{-\im s\sqrt{-\Delta}} F(s) ds\in L^2(\R^n).$$
(here we have used the dual norm $\|G(t,x)\|_{Y_q}
=\|[x]_\delta^{1/q} G(t,x)\|_{L^{q'}(\R; L^2(\R^n))}.
$)
As a consequence, by choosing respectively $q=q_1$ and $q=q_2$
in the estimates above, we deduce that the following operator is continuous:
$$T\circ T^*: Y_{q_2}\ni F(t,x)\rightarrow \int_\R e^{\im (t-s)\sqrt{-\Delta}} F(s) ds\in X_{q_1}$$
and hence
$$
\|\int_\R e^{\im (t-s)\sqrt{-\Delta}} F(s) ds\|_{X_{q_1}} \leq C \|F\|_{Y_{q_2}}.$$
In fact by a straightforward localization argument (namely choose $F(s,x)$
supported only for $s>0$) we get
$$
\|\int_0^\infty e^{\im (t-s)\sqrt{-\Delta}} F(s) ds\|_{X_{q_1}} \leq C \|F\|_{Y_{q_2}}.$$
Notice that this estimate looks very much like the one that we want to prove, except
that we would like to replace the integral
$\int_0^\infty$ by the truncated integral $\int_0^t$.
This is possible thanks to the general
Christ-Kiselev Lemma mentioned above,
that works provided that $q_2'<q_1$.

\end{proof}

\subsection{Local Cauchy Theory in $H^{1}_{rad}(\R^d)$}
\label{sbsl}
We define the space $X_T$ and we perform in $X_T$ a contraction argument
for the integral operator $S_f$ (see \eqref{eq.WF}).

We introduce $q,\bar q>2$ and $\delta>0$ such that
\begin{equation}\label{impo}
-\frac{(n-1)(p-1)}{2}+\frac{1-\delta}{\bar q}=\frac{-1+\delta}q
\end{equation}
where $u|u|^{p-1}$ is the nonlinearity.

Notice that $q,\bar q, \delta$ as above exist provided that $p\in (1,1+\frac{2}{n-1})$.
Next we introduce
the space $X_T$ whose norm is defined as
\begin{align}\label{functspace}
\|u\|_{X_T}=\|u(t,x)\|_{L^\infty_T H^1(\R^n)}&+\|[x]_\delta^{-\frac 1q} \nabla_x
u(t,x)\|_{L^q_T L^2(\R^n)}\\\nonumber  &+\|[x]_\delta^{-\frac 1q}
u(t,x)\|_{L^q_T L^2(\R^n)}\end{align}
(here and below we use he notation $L^r_T(X)=L^r((0, T);X)$).
Next we introduce
a cut-off function $\psi\in C^\infty_c(\R^n)$ with $\psi(x)=0$ for $|x|>2$ and
$\psi(x)=1$ for $|x|<1$ and we write the forcing term $u|u|^{p-1}=\psi
u|u|^{p-1}+ (1-\psi)u|u|^{p-1}$.
Then we have, by using  Proposition
\ref{freeev} and Proposition \ref{duham} where we choose
$(q_1, q_2)=(\infty, \bar q)$
and $(q_1, q_2)=(q, \bar q)$ and where we apply the operator $\nabla_x$ (that
commutes with the equation):
\begin{align*}\|S_f u\|_{X_T}&\leq C \|f\|_{H^1(\R^n)}
+ C \|[x]_\delta^{\frac 1{\bar q}}\nabla_x (\psi (u|u|^{p-1}))\|_{L^{\bar q'}_T L^2(\R^n)}
\\&+ C \|\nabla_x ((1-\psi) (u|u|^{p-1}))\|_{L^{1}_T L^2(\R^n)}\end{align*}
where $S_f$ is the integral equation defined in \eqref{eq.WF}. Then we get
by using the Leibnitz rule and the properies of the cut-off function $\psi$:
\begin{align}\label{globalBr}\|S_f u\|_{X_T}& \leq C \|f\|_{H^1}
+ C \||x|^{-\frac{(n-1)(p-1)}{2}} |x|^{\frac{1-\delta}{\bar q}} (|x|^\frac{n-1}2 |u|)^{p-1} \nabla_x  u \|_{L^{\bar q'}_T L^2(|x|<2)}
\\\nonumber &+ C \||x|^{\frac{-1+\delta}{q}} (|x|^{\frac{n-1}2}
|u|)^{p-1} \|_{L^{\bar q'}_T L^2(|x|<2)} + C \| |u|^{p-1} \nabla_x u\|_{L^{1}_T L^2(|x|>1)}.
\end{align}
Next notice that by \eqref{impo}, we have
the estimate $\||x|^{\frac{n-1}2}u\|_{L^\infty(\R^d)}\leq C\|u\|_{H^1(\R^d)}$
for every $u\in H^1_{rad} (\R^d)$ and by H\"older in space and time
we get:
$$....\leq C \|f\|_{H^1(\R^d)}+ C T^{1-\frac{1}{\bar q}-\frac 1q} \|u\|_{L^\infty_T H^1(\R^d)}^{p-1}
\|u\|_{X_T}+ C T \|u\|_{L^\infty_T H^1(\R^d)}^{p}.$$
From this estimate one can conclude that
$$S_f: B_{X_T}(0, R)\rightarrow B_{X_T}(0, R)$$
for suitable $T, R>0$ where $$B_{X_T}(0, R)=\{v(t,x)\in X_T \hbox{ s.t. } \|v\|_{X_T}\leq R\}.$$
Next we endow the set $B_{X_T}(0, R)$ with the following distance:
$$d(u_1, u_2)=\|u_1-u_2\|_{L^\infty_T L^2(\R^n)}
+\|[x]_\delta^{-\frac 1q}
u(t,x)\|_{L^q_T L^2(\R^n)}.$$ It is easy to check that the metric space
$(B_{X_T}(0,R), d)$ is complete.
Then we conclude provided that we show that the map $S_f$
is a contraction on this space.
In order to do that we notice that by using the estimates in Proposition \ref{duham}
(but we don't apply in this case the operator $\nabla_x$) then we get:
$$\|S_f u_1(t) - S_fu_2(t)\|_{L^\infty_T L^2(\R^m)}
+\|[x]_\delta^{-\frac 1q}
(S_f u_1 - S_fu_2)\|_{L^q_T L^2(\R^n)}
$$$$
\leq C T^{1-\frac{1}{\bar q}-\frac 1q} (\|u_1\|_{X_T}^{p-1}+\|u_1\|_{X_T}^{p-1})
\|[x]_\delta^{-\frac 1q}(u_1-u_2)\|_{L^q_TL^2(\R^n)}
$$$$+ C T (\|u_1\|_{X_T}^{p-1}+\|u_1\|_{X_T}^{p-1})
 \|u_1-u_2\|_{L^\infty_T L^2(\R^m)} $$
Hence by choosing $T>0$ small enough and by recalling that
$u_1, u_2\in B_{X_T}(0,R)$ then we get:
$$d(S_f u_1, S_fu_2)\leq \frac 12 d(u_1, u_2), \quad \forall u_1, u_2\in B_{X_T}(0,R).$$
We conclude by using the contraction mapping principle.

\subsection{Conditional Global Existence in $H^{1}_{rad}(\R^n)$ for $1<p<1+\frac 2n$}
\label{ssgc1}$$$$
First notice that  for $1<p<1+\frac 2n$, then from Gagliardo-Nirenberg inequality we get
\begin{equation}\label{*}\sup_{(-T_-(f), T_+(f))} \|u(t, x)\|_{H^{1/2}(\R^n)}<\infty
\end{equation}
where $(-T_-(f), T_+(f))$ is the maximal time of existence.

By arguing as in the subsection \ref{sbsl}
(and by using the fact that $u(t,x)$ is a solution)
we get:
\begin{align*}\|u\|_{X_T}&\leq C \|f\|_{H^1(\R^n)}
+C \||x|^{-\frac{(n-1)(p-1)}2} |x|^{\frac{1-\delta}{\bar q}}\nabla_x u (|x|^\frac {n-1}{2} |u|)^{p-1})\|_{L^{\bar q'}_T L^2(|x|<2)}
\\\nonumber &+ C \||u|^{p-1} \nabla_x u \|_{L^{1}_T L^2(|x|>1)}.\end{align*}
By \eqref{impo} and H\"older in time we get:
\begin{align*}
\|u\|_{X_T}&\leq C \|f\|_{H^1(\R^n)}
\\\nonumber  +C \big((\||x|^{\frac{1-\delta}q}\nabla_x u \|_{L^{\bar q'}_T L^2(|x|<2)}
&+ T \|u \|_{L^{\infty}_T H^1(\R^n)}\big)
\||x|^{\frac{n-1}{2}}u\|_{L^\infty_T L^\infty(\R^n)}^{p-1}
\\\nonumber
\leq C \|f\|_{H^1(\R^n)}
+C (T^{1-\frac 1{\bar q} - \frac 1q} \|u \|_{X_T}
&+ T \|u \|_{L^{\infty}_T H^1(\R^n)})
\||x|^\frac{n-1}2u\|_{L^\infty_T L^\infty(\R^n)}^{p-1}.
\end{align*}
Hence if we introduce the function
 $g(T)= \sup_{t\in (0, T)} \|u\|_{X_t}$
 we obtain:
$$g(T)\leq C \|f\|_{H^1(\R^n)} + C \max\{T,
T^{1-\frac{1}{\bar q}-\frac 1q}\} g(T)
\||x|^\frac{n-1}{2}u\|_{L^\infty_T L^\infty(\R^n)}^{p-1}.$$
By using \eqref{brgal} in conjunction with the assumption
$\sup_t \|u(t,x)\|_{H^{1/2}(\R^n)}<\infty$,
we have:
\begin{equation}\label{duh+free}
g(T)\leq C \|f\|_{H^1(\R^n)} + C \max\{T,
T^{1-\frac{1}{\bar q}-\frac 1q}\} g(T) \ln^{\frac{p-1}2} (2+ Cg(T)).\end{equation}
Next we prove, as consequence of the estimate above, the following:
\\
\\
{\bf CLAIM}
{\em Let $\bar T>0$ \hbox{ be s. t. }
$C\max\{\bar T,
\bar T^{1-\frac{1}{\bar q}-\frac 1q}\}  \ln^{\frac{p-1}2} (2+ 2C^2\|f\|_{H^1(\R^n)})=\frac 12$
then $g(\bar T)\leq 2C \|f\|_{H^1(\R^n)}$.}
\\
\\
In order to prove the claim notice that if it is not true then
there exists $\tilde T <\Bar T$ such that $g(\tilde T)= 2C \|f\|_{H^1(\R^n)}$.
Then by going back to the proof of \eqref{duh+free}
and by using  the property $\tilde T <\Bar T$, one can prove:
\begin{align*}
g(\tilde T)\leq & C \|f\|_{H^1(\R^n)} + C \max\{\tilde T,
\tilde T^{1-\frac{1}{\bar q}-\frac 1q}\} g(\tilde T) \ln^{\frac{p-1}2} (2+ Cg(\tilde T))
\\\nonumber & < C \|f\|_{H^1(\R^n)} + C
\max\{\bar T,
\bar T^{1-\frac{1}{\bar q}-\frac 1q}\}  g(\tilde T) \ln^{\frac{p-1}2} (2+ 2C^2 \|f\|_{H^1(\R^n)})
\\\nonumber & = C \|f\|_{H^1(\R^n)} + \frac 12 g(\tilde T)
\end{align*}
and then we get
$g(\tilde T)< 2C \|f\|_{H^1(\R^n)}$, hence contradicting the definition of $\tilde T$.
\\
\\
By an iteration argument (based on the claim above) we can construct a sequence $\bar T_j$ such that
\begin{equation}\label{Tjbar}C\max\{\bar T_j,
\bar T_j^{1-\frac{1}{\bar q}-\frac 1q}\}  \ln^{\frac{p-1}2} (2+ 2C^2\|u(T_{j})\|_{H^1(\R^n)})=\frac 12,\end{equation}
and
\begin{equation}\label{gwp}
g(T_{j+1})\leq 2^{j+1} C^{j+1}\|f\|_{H^1(\R^n)},
\end{equation} where $T_{j+1}=\bar T_1+...+\bar T_j$.
We claim that $T_j\rightarrow \infty$ as $j\rightarrow \infty$, and in this case we conclude.
In fact if this is the case then
the solution can be extended to the interval
$[0, T_j]$ for every $j>0$
and of course it implies global well-posedness since $T_j\rightarrow \infty$.
Of course if there is a subsequence $T_{j_k}\geq 1$ then we conclude,
and hence it is not restrictive to assume $T_j<1$ at least for large $j$.
In particular we get
\begin{equation}\label{1q1brq}
\bar T_j^{1-\frac{1}{\bar q}-\frac 1q}= \max\{\bar T_j,
\bar T_j^{1-\frac{1}{\bar q}-\frac 1q}\}\sim
O( (\ln \|u(T_{j})\|_{H^1(\R^n)})^{-\frac{(p-1)}{2 }} ) \geq C (j^{-\frac{p-1}{2}})\end{equation}
where we used \eqref{Tjbar}
and the fact that \eqref{gwp} implies $\|u(T_{j})\|_{H^1(\R^n)}\leq 2^j C^{j}\|f\|_{H^1(\R^n)}$.
We conclude since by \eqref{impo} we can can choose $(q, \bar q)$ such that
$\frac{1}{\bar q}+\frac 1q=\frac{(n-1)(p-1)}2+\epsilon_0$ with $\epsilon_0>0$ arbitrarily small. In fact it
implies, together with \eqref{1q1brq}, the following estimate:
$$\bar T_{j_0+1}=\sum_{j=1}^{j_0} \bar T_j\geq C \sum_{j=1}^{j_0} j^{-\frac{p-1}{(2-(n-1)(p-1)-2\epsilon_0)}}$$
and the r.h.s. is divergent (for small $\epsilon_0>0$) provided that
$\frac{p-1}{2-(n-1)(p-1)}<1$, namely $p<1+\frac 2n$.

\subsection{Cauchy Theory for sNLS}
The idea is to reduce the Cauchy theory for sNLS to the Cauchy theory
for HW that has been established above.
More precisely let us introduce
the operator
$L=\sqrt{1-\Delta} - \sqrt{-\Delta}$
and hence we can rewrite the Cauchy problem associated with
sNLS as follows:
\begin{equation}\label{modif}
\begin{cases}
\im \partial_t u + \sqrt {-\Delta}=-L u+ u|u|^{p-1}, \quad (t, x)\in \R \times \R^n,\\
u(0,x)=f(x)\in H^{1}_{rad}(\R^n).
\end{cases}
\end{equation}
Notice that the operator $L$ corresponds in Fourier
at the multiplier $\frac{1}{\sqrt{1+|\xi|^2} + |\xi|}$,
and hence we have
$L:H^s(\R^n)\rightarrow H^{s}(\R^n)$.
Thanks to this property
it is easy to check that we can perform a fixed point argument
for \eqref{modif} in the space $X_T$ following the same argument
used to solve above for HW.
The minor change concerns the fact that
the extra term $Lu$ is absorbed in the nonlinear perturbation.
Also the globalization argument given in the subsection
\ref{ssgc1} can be easily adapted to  sNLS.


\section{Existence and stability of solitary waves for sNLS}

The main point along the proof of Theorem \ref{semir} is the proof of the compactness
(up to translation) of the minimizing sequences associated with $\mathcal J_r$, as well as the proof
of the fact that the minimizers belong to $B_{1/2}\cap S_r$,
provided that $r$ is small enough. This is sufficient in order to deduce that the minimizers
are far away from the boundary  and hence are constrained critical points.
In particular they satisfy the Euler -Lagrange equation up to
the Lagrange multiplier $\omega$.
Another delicate issue is to show
that the local minimizers (namely the elements in ${\mathcal B}_r$ according with the notation in Theorem \ref{semir})
have indeed minimal energy between all the critical points
of ${\mathcal E}_s$ constrained on the whole sphere $S_r$, provided that $r>0$ is small.

Next we shall focus on the points above, and we split the proofs in several steps.
We also mention that the statement about the orbital stability
it follows easily by the classical argument of Cazenave-Lions
(see \cite{CL}) once a nice Cauchy theory has been established.

\subsection{Local Minima Structure}

We start with the following lemma that shows a local minima structure for the functional
${\mathcal E}_s$ on the constraint $S_r$, for $r$ small enough.
\begin{prop}\label{locmin}
There exists $r_0>0$ such that:
\begin{equation}\label{lispetr}\inf_{\{u\in S_{r} | \|u\|_{H^{1/2}(\R^n)}=1\}}
{\mathcal E}_s(u)>\frac 14, \quad \forall r<r_0;\end{equation}
\begin{equation}\label{petrpet}\inf_{\{u\in S_{r}|\|u\|_{H^{1/2}(\R^n)}\leq 2 \sqrt r\}}
{\mathcal E}_s(u)
<\frac r2,
\quad \forall r<r_0.\end{equation}
\end{prop}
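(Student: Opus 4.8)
The plan is to read both inequalities off the fractional Gagliardo--Nirenberg inequality, exploiting that in the regime $1+\frac 2n<p<1+\frac 2{n-1}$ the nonlinearity is $L^2$-supercritical but $H^{1/2}$-subcritical. Concretely, in the admissible range $2<p+1<\frac{2n}{n-1}$ one has
$$\|u\|_{L^{p+1}(\R^n)}^{p+1}\leq C_{GN}\,\|u\|_{\dot H^{1/2}(\R^n)}^{n(p-1)}\,\|u\|_{L^2(\R^n)}^{(p+1)-n(p-1)},$$
where the hypotheses on $p$ give precisely $n(p-1)>2$ (supercriticality) and $(p+1)-n(p-1)>0$ (subcriticality). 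Using $\|u\|_{\dot H^{1/2}}\leq\|u\|_{H^{1/2}}$ and $\|u\|_{L^2}^2=r$ on $S_r$, this becomes
$$\|u\|_{L^{p+1}(\R^n)}^{p+1}\leq C_{GN}\,\|u\|_{H^{1/2}(\R^n)}^{n(p-1)}\,r^{\frac{(p+1)-n(p-1)}{2}},$$
and since the exponent of $r$ is strictly positive, the nonlinear part of the energy is uniformly small for small mass. This single estimate drives both \eqref{lispetr} and \eqref{petrpet}.

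To prove \eqref{lispetr} I would restrict to the sphere $\|u\|_{H^{1/2}}=1$, where the kinetic part of ${\mathcal E}_s$ is exactly $\frac12$. The displayed bound then gives
$${\mathcal E}_s(u)=\frac12-\frac{1}{p+1}\|u\|_{L^{p+1}(\R^n)}^{p+1}\geq \frac12-\frac{C_{GN}}{p+1}\,r^{\frac{(p+1)-n(p-1)}{2}},$$
and it suffices to choose $r_0>0$ so small that the subtracted term is below $\frac14$ for all $r<r_0$; this yields ${\mathcal E}_s(u)>\frac14$ on the whole sphere, uniformly in $u$, which is \eqref{lispetr}.

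For \eqref{petrpet} I would instead exhibit an explicit competitor deep inside the small ball. Fix a profile $w$ with $\|w\|_{L^2}=1$ and use the $L^2$-preserving rescaling $u_\lambda=\lambda^{n/2}\sqrt r\,w(\lambda\,\cdot)\in S_r$, for which $\|u_\lambda\|_{\dot H^{1/2}}^2=\lambda r\|w\|_{\dot H^{1/2}}^2$ and $\|u_\lambda\|_{L^{p+1}}^{p+1}=\lambda^{n(p-1)/2}r^{(p+1)/2}\|w\|_{L^{p+1}}^{p+1}$. Choosing $\lambda$ so that $\|u_\lambda\|_{H^{1/2}}^2=r+\lambda r\|w\|_{\dot H^{1/2}}^2\leq 4r$ keeps $u_\lambda$ in the admissible set $\{\,u\in S_r:\|u\|_{H^{1/2}}\leq 2\sqrt r\,\}$, and then
$${\mathcal E}_s(u_\lambda)=\frac r2+\frac{\lambda r}{2}\|w\|_{\dot H^{1/2}}^2-\frac{\lambda^{n(p-1)/2}}{p+1}\,r^{\frac{p+1}{2}}\|w\|_{L^{p+1}}^{p+1}.$$
The aim is to pick $\lambda$ and $w$ so that the (negative) nonlinear term dominates the homogeneous kinetic cost, pushing ${\mathcal E}_s(u_\lambda)$ below $\frac r2$. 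Together with \eqref{lispetr} this places the constrained infimum ${\mathcal J}_r$ strictly below the value $\frac14$ forced on the sphere $\|u\|_{H^{1/2}}=1$, which is exactly what the later attainment and ``interior minimiser'' arguments require.

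The main obstacle is the balance in \eqref{petrpet}. Within a ball of $H^{1/2}$-radius of order $\sqrt r$, both competing quantities are small --- the homogeneous kinetic energy is $O(r)$ while the nonlinear energy is $O(r^{(p+1)/2})$ for the natural profile --- and the competition is governed by the supercritical exponent $\frac{n(p-1)}2>1$. The delicate point is to extract a strictly negative net contribution near the threshold value $\frac r2$ while respecting the size constraint $\|u\|_{H^{1/2}}\leq 2\sqrt r$; this is precisely where the $L^2$-supercritical structure enters, and where the constants (the choice of $\lambda$, the profile $w$, and the final threshold $r_0$) must be tracked with care.
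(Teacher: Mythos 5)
Your treatment of \eqref{lispetr} is correct and is essentially the paper's own argument: by Gagliardo--Nirenberg, on $S_r\cap\{\|u\|_{H^{1/2}}=1\}$ the nonlinear term is $O\bigl(r^{\gamma_0}\bigr)$ with $\gamma_0=\frac{(p+1)-n(p-1)}{2}>0$, so the energy exceeds $\tfrac14$ for $r$ small.

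The plan you sketch for \eqref{petrpet}, however, has a fatal gap, and it is not a matter of tracking constants: the route is self-defeating. You account for the kinetic cost of your competitor through the homogeneous norm, i.e.\ you bound
\[
{\mathcal E}_s(u_\lambda)\le \frac r2+\frac12\|u_\lambda\|_{\dot H^{1/2}(\R^n)}^2-\frac1{p+1}\|u_\lambda\|_{L^{p+1}(\R^n)}^{p+1},
\]
so you need a function in the ball $\|u\|_{H^{1/2}}\le 2\sqrt r$ with $\frac12\|u\|_{\dot H^{1/2}}^2<\frac1{p+1}\|u\|_{L^{p+1}}^{p+1}$. The Gagliardo--Nirenberg inequality you yourself invoke forbids this for small $r$. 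Indeed, writing $t=\|u\|_{\dot H^{1/2}}^2$ and $\|u\|_{L^2}^2=r$, the required inequality
\[
\frac t2<\frac{C_{GN}}{p+1}\,t^{\frac{n(p-1)}2}\,r^{\frac{(p+1)-n(p-1)}2}
\]
is equivalent (since $n(p-1)>2$) to $t\gtrsim r^{\theta}$ with $\theta=\frac{n(p-1)-(p+1)}{n(p-1)-2}$. Because $p<1+\frac2{n-1}$ the numerator $n(p-1)-(p+1)$ is negative while the denominator is positive, so $\theta<0$ and the requirement reads $t\ge c\,r^{-|\theta|}\to\infty$ as $r\to 0$, which is incompatible with $t\le\|u\|_{H^{1/2}}^2\le 4r$. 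In other words, once the kinetic excess is measured by $\|\cdot\|_{\dot H^{1/2}}^2$, the very positivity that proves \eqref{lispetr} (and Lemma \ref{prelim} of the paper) shows that your upper bound can never drop below $\tfrac r2$; no choice of profile $w$ and scale $\lambda$ can rescue it. (Your scaling direction is also the wrong one: concentration only inflates $\|u\|_{\dot H^{1/2}}$ and pushes you out of the ball.)

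What your proposal misses is the structural point on which the paper's proof of \eqref{petrpet} rests: on $S_r$ the \emph{true} kinetic excess is not $\frac12\|u\|_{\dot H^{1/2}}^2$ but
\[
\frac12\|u\|_{H^{1/2}(\R^n)}^2-\frac r2=\frac12\int_{\R^n}\frac{|\xi|^2}{1+\sqrt{1+|\xi|^2}}\,|\hat u(\xi)|^2\,d\xi,
\]
a quantity that degenerates quadratically at $\xi=0$, i.e.\ behaves like an $\dot H^1$ norm on low frequencies. The paper therefore fixes $\hat\varphi$ supported in $\{|\xi|<1\}$ with $\|\varphi\|_{L^2}^2=r$ and uses the \emph{spreading} family $\hat\varphi_\lambda(\xi)=\lambda^{n/2}\hat\varphi(\lambda\xi)$, $\lambda\to\infty$: the kinetic excess is then $\le\frac12\|\varphi_\lambda\|_{\dot H^1}^2=O(\lambda^{-2})$, while $\|\varphi_\lambda\|_{L^{p+1}}^{p+1}=\lambda^{-\frac{n(p-1)}2}\|\varphi\|_{L^{p+1}}^{p+1}$ decays strictly slower because $n(p-1)<4$ in the admissible range; moreover $\|\varphi_\lambda\|_{H^{1/2}}^2\to r$, so the constraint $\|\varphi_\lambda\|_{H^{1/2}}\le 2\sqrt r$ is satisfied for large $\lambda$. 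This low-frequency mechanism---the degeneracy of $\sqrt{1-\Delta}-1$ at the origin in Fourier space, which distinguishes sNLS from HW---is exactly what makes \eqref{petrpet} true, and it is absent from your argument.
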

\begin{proof} By the Gagliardo-Nirenberg inequality we get for some $\epsilon_0>0$:
\begin{align}\label{importGN}
\frac 12 &\|u\|_{H^{1/2}(\R^n)}^2 -\frac 1{p+1}\|u\|_{L^{p+1}(\R^n)}^{p+1}
\geq \frac 12 \|u\|_{H^{1/2}(\R^n)}^2 - C_0 r^{\gamma_0} \|u\|_{H^{1/2}(\R^n)}^{2+\epsilon_0}
\\\nonumber &
= \frac 12 \|u\|_{H^{1/2}(\R^n)}^2 ( 1-C_0 r^{\gamma_0}
\|u\|_{H^{1/2}(\R^n)}^{\epsilon_0}),\quad \forall
u\in S_r
\end{align}
and hence
$$\frac 12 \|u\|_{H^{1/2}(\R^n)}^2 -\frac 1{p+1}\|u\|_{L^{p+1}(\R^n)}^{p+1}
> \frac 14,  \quad \forall
u\in S_r,\quad  \|u\|_{H^{1/2}(\R^n)}=1.
$$
Concerning the bound \eqref{petrpet} notice that:
\begin{align*}&\frac 12 \|u\|_{H^{1/2}(\R^n)}^2 -\frac 1{p+1}\|u\|_{L^{p+1}(\R^n)}^{p+1}
\\\nonumber  =\frac 12 \|u\|_{H^{1/2}(\R^n)}^2 & - \frac 12 \|u\|_{L^2(\R^n)}^2
+ \frac 12 \|u\|_{L^2(\R^n)}^2 -\frac 1{p+1}\|u\|_{L^{p+1}(\R^n)}^{p+1}
\end{align*}
and by using Plancharel
\begin{align*}...=\frac 12 \|u\|_{H^{1/2}(\R^n)}^2 - \frac 12 \|u\|_{L^2(\R^n)}^2
+ \frac r2  - \frac 1{p+1}\|u\|_{L^{p+1}(\R^n)}^{p+1}\\\nonumber
=
\frac 12 \|u\|_H^2  + \frac r2  -\frac 1{p+1}\|u\|_{L^{p+1}(\R^n)}^{p+1}, \quad \forall u\in H^{1/2}(\R^n)\cap S_r
\end{align*}
where
$$\|u\|_H^2=\int_{\R^n} \frac{|\xi|^2}{1+\sqrt {1+ |\xi|^2}} |\hat u(\xi)|^2
d\xi.$$
In particular we get
\begin{equation}\label{trunk}\|u\|_H^2\leq \frac 12 \|u\|_{\dot H^1(\R^n)}^2, \quad \forall u\in \dot H^1(\R^n)
\hbox{ s.t. } \hat u(\xi)=0, \quad \forall |\xi|>1.\end{equation}
Next we fix $\varphi$ smooth,
such that: $\hat \varphi(\xi)=0, \forall |\xi|\geq 1$ and $\|\varphi\|_{L^2(\R^n)}^2=r$.
We also
introduce
$\varphi_\lambda(x)$ where
$\hat \varphi_\lambda (\xi)= \lambda^{n/2} \hat \varphi( \lambda \xi),
$
then we get by the inequalities above (next we restrict to $\lambda>1$
in order to guarantee
$\hat \varphi_\lambda (\xi)=0, \forall |\xi|>1$ and hence we can apply \eqref{trunk}):
\begin{align*}
\frac 12 &\|\varphi_\lambda\|_{H^{1/2}(\R^n)}^2 -\frac 1{p+1}\|\varphi_\lambda\|_{L^{p+1}(\R^n)}^{p+1}
\\\nonumber
&\leq \frac 12 \|\varphi_\lambda\|_{\dot H^1(\R^n)}^2  + \frac r2  -\frac 1{p+1}\|\varphi_\lambda\|_{L^{p+1}(\R^n)}^{p+1}.
\end{align*}
Notice that $\varphi_\lambda\in S_r$. Moreover by a rescaling argument we get
$$\frac 12 \|\varphi_\lambda\|_{\dot H^1(\R^n)}^2  -\frac 1{p+1}\|\varphi_\lambda\|_{L^{p+1}(\R^n)}^{p+1}
<0$$
for any $\lambda$ large enough. We conclude since
$$\|\varphi_\lambda\|_{H^{1/2}(\R^n)}^2=\int_{\R^n} |\hat \varphi|^2 \sqrt{1+\frac{|\xi|^2}
{\lambda^2}} d\xi\rightarrow \|\varphi\|_{L^2(\R^n)}^2=r, \hbox{ as } \lambda \rightarrow \infty$$
and hence for $\lambda$ large enough $\|\varphi_\lambda\|_{H^{1/2}(\R^n)}<2\sqrt r$.

\end{proof}

\subsection{Avoiding Vanishing}

Next result will be crucial to exclude vanishing for the minimizing sequences.
In the sequel $r_0>0$ is the number that appears in Proposition
\ref{locmin}.\begin{prop}\label{vanis}
Assume $r<r_0$ and $u_k\in S_r\cap B_1$
be such that ${\mathcal E}_s(u_k)\rightarrow {\mathcal J}_r$
then $\liminf_{k\rightarrow \infty} \|u_k\|_{L^{p+1}(\R^n)}>0$.
\end{prop}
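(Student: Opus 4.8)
The plan is to argue by contradiction: a vanishing $L^{p+1}$ norm along the minimizing sequence would force the energy up to the level $\tfrac r2$, which we first show strictly exceeds $\mathcal{J}_r$. So the whole argument hinges on establishing the strict gap $\mathcal{J}_r<\tfrac r2$ and then on the elementary coercivity $\|\cdot\|_{H^{1/2}}\ge\|\cdot\|_{L^2}$.

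First I would record the inequality $\mathcal{J}_r<\tfrac r2$. Choosing $r_0$ small enough that $2\sqrt r\le 1$ for all $r<r_0$, the sublevel set $\{u\in S_r : \|u\|_{H^{1/2}(\R^n)}\le 2\sqrt r\}$ is contained in $S_r\cap B_1$, hence its elements are admissible competitors for the localized problem \eqref{Jr}. Taking the infimum of $\mathcal{E}_s$ over this smaller set can only raise the value, so by \eqref{petrpet} of Proposition \ref{locmin},
$$\mathcal{J}_r=\inf_{S_r\cap B_1}\mathcal{E}_s\leq \inf_{\{u\in S_r:\|u\|_{H^{1/2}(\R^n)}\le 2\sqrt r\}}\mathcal{E}_s<\frac r2.$$

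Next, suppose toward a contradiction that $\liminf_{k}\|u_k\|_{L^{p+1}(\R^n)}=0$, and pass to a subsequence (not relabeled) along which $\|u_k\|_{L^{p+1}(\R^n)}\to 0$. The key observation is the lower bound $\|u_k\|_{H^{1/2}(\R^n)}^2\geq \|u_k\|_{L^2(\R^n)}^2=r$, immediate from Plancherel since $\sqrt{1+|\xi|^2}\geq 1$. Inserting this into the definition of the energy gives
$$\mathcal{E}_s(u_k)=\frac12\|u_k\|_{H^{1/2}(\R^n)}^2-\frac1{p+1}\|u_k\|_{L^{p+1}(\R^n)}^{p+1}\geq \frac r2-\frac1{p+1}\|u_k\|_{L^{p+1}(\R^n)}^{p+1}.$$
Letting $k\to\infty$, the last term vanishes, so $\liminf_k \mathcal{E}_s(u_k)\geq \tfrac r2$. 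Since $\mathcal{E}_s(u_k)\to \mathcal{J}_r<\tfrac r2$ by hypothesis, this is a contradiction, which proves $\liminf_k\|u_k\|_{L^{p+1}(\R^n)}>0$.

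The only point demanding care — rather than a genuine obstacle — is the first step: one must fix $r_0$ small enough that the test functions furnished by \eqref{petrpet} genuinely lie in the ball $B_1$, so that they remain admissible for the \emph{localized} infimum \eqref{Jr} and the strict inequality $\mathcal{J}_r<\tfrac r2$ is legitimate. Once this gap between $\mathcal{J}_r$ and the vanishing-energy threshold $\tfrac r2$ is secured, the remainder is the one-line consequence of $\|u\|_{H^{1/2}(\R^n)}^2\ge\|u\|_{L^2(\R^n)}^2=r$ together with the assumed convergence $\mathcal{E}_s(u_k)\to\mathcal{J}_r$.
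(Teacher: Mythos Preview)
Your proof is correct and follows essentially the same argument as the paper: both use Proposition~\ref{locmin} to secure the strict inequality $\mathcal{J}_r<\tfrac r2$, then obtain a contradiction from $\|u_k\|_{H^{1/2}(\R^n)}^2\geq \|u_k\|_{L^2(\R^n)}^2=r$ when the $L^{p+1}$ norm is assumed to vanish. Your version is in fact more explicit than the paper's about why the competitors from \eqref{petrpet} lie in $B_1$ (namely $2\sqrt r\leq 1$ for $r$ small), a point the paper leaves implicit.
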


\begin{proof}
Assume by the absurd that it is false.
Then we get by Proposition  \ref{locmin}
$$\frac r2> {\mathcal J}_r=\lim_{k\rightarrow \infty} {\mathcal E}_s(u_k)
=\lim_{k\rightarrow \infty} \frac 12 \|u_k\|_{H^{1/2}(\R^n)}^2.$$
This is a contradiction since $u_k\in S_r$ and hence
$\|u_k\|_{H^{1/2}(\R^n)}^2\geq r$.

\end{proof}

\subsection{Avoiding Dichotomy}

Next result will be crucial to avoid dichotomy.

\begin{prop}\label{rich}
There exists $r_1>0$ such that for any $0<r<l<r_1$ we have
\begin{equation*}r {\mathcal J}_l<l {\mathcal J}_r.\end{equation*}
\end{prop}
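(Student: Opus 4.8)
The plan is to prove the equivalent statement that $r \mapsto \mathcal{J}_r/r$ is strictly decreasing on $(0,r_1)$, since $\mathcal{J}_l/l < \mathcal{J}_r/r$ for $r<l$ is exactly the claimed inequality after clearing denominators. The mechanism is the mass-raising scaling $u \mapsto \sqrt{\theta}\,u$ with $\theta = l/r > 1$: it sends mass $r$ to mass $l$, and because the nonlinearity is superquadratic it lowers the energy strictly and quantitatively. Throughout I fix $r_1 \le r_0$ small enough that, for $r<r_1$, the Gagliardo--Nirenberg remainder satisfies $C_0 r^{\gamma_0} \le 1/2$ and that $r_1 < 1/4$; the latter will be what keeps the rescaled competitors inside the ball $B_1$.

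First I would record an a priori smallness bound for minimizing sequences. On $S_r \cap B_1$ one has $\|u\|_{H^{1/2}(\R^n)} \le 1$, so \eqref{importGN} gives $\mathcal{E}_s(u) \ge \tfrac12\|u\|_{H^{1/2}(\R^n)}^2\bigl(1 - C_0 r^{\gamma_0}\bigr) \ge \tfrac14\|u\|_{H^{1/2}(\R^n)}^2$ for $r<r_1$; in particular $\mathcal{J}_r \ge 0 > -\infty$. On the other hand \eqref{petrpet} yields a test configuration in $S_r \cap B_1$ with energy below $r/2$, hence $\mathcal{J}_r < r/2$. Combining the two, any minimizing sequence $(u_k)$ for $\mathcal{J}_r$ satisfies $\tfrac14\|u_k\|_{H^{1/2}(\R^n)}^2 \le \mathcal{E}_s(u_k) \to \mathcal{J}_r < r/2$, so $\|u_k\|_{H^{1/2}(\R^n)}^2 < 2r$ for all large $k$.

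Next I set $\theta = l/r > 1$ and $v_k = \sqrt{\theta}\,u_k$. Then $\|v_k\|_{L^2(\R^n)}^2 = \theta\,r = l$, so $v_k \in S_l$, and by the a priori bound $\|v_k\|_{H^{1/2}(\R^n)}^2 = \theta\,\|u_k\|_{H^{1/2}(\R^n)}^2 < 2\theta r = 2l < 1$ (using $r_1 < 1/4 < 1/2$), so $v_k \in S_l \cap B_1$ is admissible for $\mathcal{J}_l$. A direct computation of the energy under this scaling gives
$$\mathcal{E}_s(v_k) = \theta\,\mathcal{E}_s(u_k) - \frac{\theta^{(p+1)/2} - \theta}{p+1}\,\|u_k\|_{L^{p+1}(\R^n)}^{p+1},$$
and since $(p+1)/2 > 1$ and $\theta > 1$ the subtracted coefficient is strictly positive.

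Finally I invoke Proposition \ref{vanis} to get $c := \liminf_k \|u_k\|_{L^{p+1}(\R^n)} > 0$. Passing to a subsequence along which $\|u_k\|_{L^{p+1}(\R^n)} \to c$ and letting $k \to \infty$ in the admissibility bound $\mathcal{J}_l \le \mathcal{E}_s(v_k)$, I obtain $\mathcal{J}_l \le \theta\,\mathcal{J}_r - \tfrac{\theta^{(p+1)/2}-\theta}{p+1}\,c^{p+1} < \theta\,\mathcal{J}_r = \tfrac{l}{r}\,\mathcal{J}_r$, and multiplying by $r>0$ gives $r\,\mathcal{J}_l < l\,\mathcal{J}_r$. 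I expect the only genuine obstacle to be the admissibility $v_k \in B_1$: unlike a global minimization problem, the local constraint $B_1$ is not scaling-invariant and rescaling can eject a function from the ball, so the argument really does require the a priori estimate $\|u_k\|_{H^{1/2}(\R^n)}^2 < 2r$ coupled with the smallness $r_1 < 1/4$; the strictness of the final inequality relies entirely on the non-vanishing of Proposition \ref{vanis}.
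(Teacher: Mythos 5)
Your proof is correct and follows essentially the same route as the paper: rescale a minimizing sequence by $\sqrt{l/r}$, keep the rescaled competitors inside $B_1$ thanks to an a priori $H^{1/2}$-smallness bound on the minimizing sequence, and obtain the strict inequality from the non-vanishing result of Proposition \ref{vanis}. The only cosmetic difference is that where the paper invokes Lemma \ref{prelim} to ensure $\|v_k\|_{H^{1/2}(\R^n)} < 2\sqrt{r}$, you re-derive the equivalent bound $\|u_k\|_{H^{1/2}(\R^n)}^2 < 2r$ directly from \eqref{importGN} and \eqref{petrpet}, which are precisely the ingredients used to prove that lemma.
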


We shall need the following result that allows us to get a bound on
the size of the minimizing sequences.
\begin{lem}\label{prelim}
There exists $r_2>0$ such that
$$\inf_{\{u\in S_{r}|\|u\|_{H^{1/2}(\R^n)}\leq 2\sqrt r\}}
{\mathcal E}_s (u)<
\inf_{\{u\in S_{r}|\sqrt r\leq \|u\|_{H^{1/2}(\R^n)}<1\}} {\mathcal E}_s (u),
\quad
\forall r<r_2.$$
\end{lem}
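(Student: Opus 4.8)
The plan is to combine the upper bound already available on the inner region with a matching lower bound on the outer region, the two being separated by a quantitative gap coming from the Gagliardo--Nirenberg inequality. The inner bound is free: by \eqref{petrpet} in Proposition \ref{locmin} we already know that
$$\inf_{\{u\in S_r \,|\, \|u\|_{H^{1/2}(\R^n)}\leq 2\sqrt r\}} {\mathcal E}_s(u) < \frac r2, \qquad \forall\, r<r_0 .$$
So it suffices to show that on the complementary region the energy stays above $r/2$. I read the second infimum over the annulus $\{2\sqrt r\leq \|u\|_{H^{1/2}(\R^n)}<1\}$: every $u\in S_r$ satisfies $\|u\|_{H^{1/2}(\R^n)}\geq\sqrt r$, so for the comparison to be meaningful the two competing regions must be taken disjoint, and the factor $2$ in the cutoff is exactly what produces the gap below.

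The key estimate I would use is the same Gagliardo--Nirenberg bound exploited in the proof of Proposition \ref{locmin}, namely
$$\frac{1}{p+1}\|u\|_{L^{p+1}(\R^n)}^{p+1}\leq C_0\, r^{\gamma_0}\,\|u\|_{H^{1/2}(\R^n)}^{2+\epsilon_0}, \qquad \forall\, u\in S_r,$$
with $\epsilon_0>0$ (this is where $p>1+\frac2n$ enters) and $\gamma_0>0$ (which uses $p<1+\frac2{n-1}$); see \eqref{importGN}. For $u\in S_r$ with $\|u\|_{H^{1/2}(\R^n)}<1$ one has $\|u\|_{H^{1/2}(\R^n)}^{2+\epsilon_0}\leq\|u\|_{H^{1/2}(\R^n)}^2$, whence
$${\mathcal E}_s(u)\geq\Big(\frac12-C_0 r^{\gamma_0}\Big)\|u\|_{H^{1/2}(\R^n)}^2 .$$
If in addition $\|u\|_{H^{1/2}(\R^n)}\geq 2\sqrt r$, then $\|u\|_{H^{1/2}(\R^n)}^2\geq 4r$ and therefore
$${\mathcal E}_s(u)\geq 4r\Big(\frac12-C_0 r^{\gamma_0}\Big)=2r-4C_0 r^{1+\gamma_0}.$$

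To conclude I would choose $r_2\in(0,r_0]$ so small that $4C_0 r^{\gamma_0}\leq\frac32$ for all $r<r_2$; this is possible precisely because $\gamma_0>0$. For such $r$ the last display gives ${\mathcal E}_s(u)\geq 2r-\frac32 r=\frac r2$ uniformly over the annular region, hence
$$\inf_{\{u\in S_r\,|\, 2\sqrt r\leq\|u\|_{H^{1/2}(\R^n)}<1\}} {\mathcal E}_s(u)\geq\frac r2 > \inf_{\{u\in S_r\,|\,\|u\|_{H^{1/2}(\R^n)}\leq 2\sqrt r\}} {\mathcal E}_s(u),$$
the strict inequality being supplied by \eqref{petrpet}. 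This proves the claim. The only genuinely delicate point is the bookkeeping of the two competing powers of $\|u\|_{H^{1/2}(\R^n)}$: the gain in the outer region rests entirely on having a strictly larger power ($2+\epsilon_0$ against $2$) damped by the small factor $r^{\gamma_0}$, together with the factor-$4$ improvement $\|u\|_{H^{1/2}(\R^n)}^2\geq 4r$ coming from the cutoff $2\sqrt r$. Everything else is a routine selection of $r_2$.
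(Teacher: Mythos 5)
Your proof is correct and follows essentially the same route as the paper's: the inner bound is taken from \eqref{petrpet}, and the outer annulus $\{2\sqrt r\leq \|u\|_{H^{1/2}(\R^n)}<1\}$ is handled via the Gagliardo--Nirenberg bound \eqref{importGN}, dropping the extra power $\epsilon_0$ since $\|u\|_{H^{1/2}(\R^n)}<1$ and using $\|u\|_{H^{1/2}(\R^n)}^2\geq 4r$ to get a lower bound of the form $2r\bigl(1-O(r^{\gamma_0})\bigr)>\frac r2$ for $r$ small. Your reading of the second infimum as being over $2\sqrt r\leq\|u\|_{H^{1/2}(\R^n)}<1$ (correcting an evident typo in the statement) is exactly what the paper's own proof does.
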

\begin{proof}
In view of Proposition  \ref{locmin}
it is sufficient to prove that
$$\inf_{\{u\in S_{r}|2 \sqrt r\leq \|u\|_{H^{1/2}(\R^n)}<1\}}
\big(\frac 12 \|u\|_{H^{1/2}(\R^n)}^2 -\frac 1{p+1}\|u\|_{L^{p+1}(\R^n)}^{p+1}\big)>\frac r2.$$
In order to prove it, we go back to
\eqref{importGN}
and we get
\begin{align*}
&\frac 12 \|u\|_{H^{1/2}(\R^n)}^2 -\frac 1{p+1}\|u\|_{L^{p+1}(\R^n)}^{p+1}
\\\nonumber
\geq \frac 12 \|u\|_{H^{1/2}(\R^n)}^2 &( 1-C_0 r^{\gamma_0} \|u\|_{H^{1/2}(\R^n)}^{\epsilon_0}),\quad \forall
u\in S_r
\end{align*}
and hence
$$...\geq  2 r ( 1-C_0 r^{\gamma_0}) , \quad \forall
u\in S_r, \quad 2\sqrt r \leq \|u\|_{H^{1/2}(\R^n)}<1.
$$
We conclude provided that $r$ is small enough.

\end{proof}

We can now conclude the proof of Proposition \ref{rich}.
Fix $v_k\in S_r, \|v_k\|_{H^{1/2}(\R^n)}\leq 1$  such that $\lim_{k\rightarrow \infty}
\mathcal{E}_s(v_k)={\mathcal J}_r$.

Notice that by Lemma \ref{prelim}
we can assume $\|v_k\|_{H^{1/2}(\R^n)}< 2 \sqrt r$.
In particular we have
$$\sqrt{\frac lr} v_k\in S_l \hbox{ and } \sqrt{\frac lr}
 \|v_k\|_{H^{1/2}(\R^n)}<2 \sqrt l,$$
and hence
$${\mathcal J}_l\leq \liminf_k
{\mathcal E}_s(\sqrt{\frac lr} v_k)$$$$= \frac 12 \frac{l}{r} \|v_k\|_{H^{1/2}(\R^n)}^2
- \frac 1{p+1} \frac{l^\frac{p+1}2}{r^\frac{p+1}2} \|v_k\|_{L^{p+1}(\R^n)}^{{p+1}}.$$
Recall that by Proposition \ref{vanis} we can assume
$$\|v_k\|_{L^{p+1}(\R^n)}>\delta_0>0$$
and hence we can continue the estimate above as follows
$$...=
 \frac{l}{r} \big( \frac 12
 \|v_k\|_{H^{1/2}(\R^n)}^2 - \frac 1{p+1} \|v_k\|_{L^{p+1}(\R^n)}^{p+1} \big)
+\frac 1{p+1}
\big (\frac{l}{r} - \frac{l^\frac{p+1}2}{r^\frac{p+1}2}\big) \|v_k\|_{p+1}^{p+1} $$
$$=\frac{l}{r}{\mathcal E}_s(v_k) + \big (\frac{l}{r} - \frac{l^\frac{p+1}2}{r^\frac{p+1}2}
\big ) \frac{\delta_0}{p+1}
\leq \frac{l}{r} {\mathcal J}_r  +\big (\frac{l}{r}- \frac{l^\frac{p+1}2}{r^\frac{p+1}2}\big ) \frac{\delta_0}{p+1}
<{\mathcal J}_r \frac{l}{r}.
$$
\subsection{Conclusions.}
Notice that by Proposition
\ref{locmin} we deduce that the minimizers (if exist)
have to belong necessarily to $B_{1/2}$. Next we prove
the compactness, up to
translations, of the minimizing sequences. Since now on we shall fix $r$
small enough
according with the Propositions above.

Let $u_k\in S_r$ be such that $\|u_k\|_{H^{1/2}(\R^n)}\leq 1$
and
${\mathcal E}_s(u_k)\rightarrow {\mathcal J}_r$, then
by combining Proposition \ref{vanis}
and with the Lieb translation Lemma in $H^{1/2}(\R^n)$ (see \cite{BFV}),  we have
that up to translation the weak limit of $u_k$ is $\bar u\neq 0.$
Our aim is to prove that $\bar u$ is  a strong limit in $L^2(\R^n)$. Hence
if we denote $\|\bar u\|_{L^2(\R^n)}^2=\bar r$ then it is sufficient to prove $\bar r=r$.
Notice that we have
by weak convergence
$$\|u_k - \bar u\|_{L^2(\R^n)}^2+\|\bar u\|_{L^2(\R^n)}^2=r+o(1)$$
and if we assume (by subsequence)
$\|u_k - \bar u\|_{L^2(\R^n)}^2\rightarrow t$ then we
have
$t+\bar r=r$. We shall prove that necessarily $t=0$ and hence $r=\bar r$.
Next by classical arguments, namely Brezis-Lieb Lemma
(see \cite{brlieb})
and the Hilbert structure of $H^{1/2}(\R^n)$, we get
$${\mathcal E}_s(u_k)={\mathcal E}_s(u_k-\bar u)+{\mathcal E}_s(\bar u)+o(1)\geq
{\mathcal J}_{\|u_k-\bar u\|_{L^2(\R^n)}^2} + {\mathcal J}_{\|\bar u\|_{L^2(\R^n)}^2}+o(1).
$$
Passing to the limit as $k\rightarrow \infty$, and by recalling $t+\bar r=r$ we get
\begin{equation}\label{addit}{\mathcal J}_{t+\bar r}={\mathcal J}_r\geq
{\mathcal J}_{\bar r} + {\mathcal J}_t.\end{equation}
On the other hand by
Proposition \ref{rich} we get
$$(t+\bar r){\mathcal J}_{\bar r}> \bar r {\mathcal J}_{t+\bar r} \hbox{ and } (t+\bar r)
{\mathcal J}_t> t {\mathcal J}_{t+\bar r}$$
that imply ${\mathcal J}_{\bar r}+ {\mathcal J}_{t}>{\mathcal J}_{t+\bar r}$ and it is in contradiction with
\eqref{addit}.
\\

As a last step we have to prove
that the local minima (namely the elements in ${\mathcal B}_r$)
minimize the energy ${\mathcal E}_s$ among all the
critical points of ${\mathcal E}_s$ constrained to $S_r$, provided that
$r>0$ is small enough.

In order to prove this fact we shall prove the following property:
$$\exists r_0>0 \hbox{ s.t. } \forall r<r_0 \hbox{ the following occurs }$$$$
\|w\|_{H^{1/2}(\R^n)} <1/2, \quad  \forall w\in S_r \hbox{ s.t. }  {\mathcal E}_s'|_{S_r}=0,
{\mathcal E}_s(w)<{\mathcal J}_r.$$

Once this fact is established then we can conclude easily since
it implies that if $w\in S_r$ is a constraint critical points with energy below ${\mathcal J}_r$
and $r<r_0$, then $w$  can be used as test functions to estimate ${\mathcal J}_r$
from above and we get
${\mathcal J}_r\leq  {\mathcal E}_s(w)$, hence we have a contradiction.

In order to prove the property stated above recall that if $w\in S_r$ is a critical point of
${\mathcal E}_s$ restricted to $S_r$, then notice that  it has to satisfy the
following Pohozaev type identity (this follows by an adaptation of the argument
in \cite{secchi} to sNLS):
\begin{equation*}
{\mathcal Q}(w)=0 \hbox{ where }
{\mathcal Q}(w)
=\frac 12 \int_{\R^n} \frac{|\xi|^2}{\sqrt{1+|\xi|^2}}|\hat w|^2d \xi-\frac{n(p-1)}{2(p+1)}
\| w\|^{p+1}_{L^{p+1}(\R^n)}
\end{equation*}
and hence
\begin{align*}\label{eq:esuV}{\mathcal E}_s(w)&=
{\mathcal E}_s(w)-\frac{2}{n(p-1)}{\mathcal Q}(w)\\\nonumber
& =\frac{1}{2}\|w\|_{H^{1/2}(\R^n)}^2- \frac{1}{n(p-1)}\int_{\R^n} \frac{|\xi|^2}{\sqrt{1+|\xi|^2}}|\hat w|^2d \xi\geq \frac{np-n-2}{2n(p-1)}\|w\|_{ H^{1/2}(\R^n)}^2.
\end{align*}
Notice that $np-n-2>0$ if $p>1+\frac2n$.
From the estimate above we get
$$\frac{np-n-2}{2n(p-1)}\|w\|_{\dot H^{1/2}(\R^n)}^2\leq {\mathcal E}_s(w)<{\mathcal J}_r<\frac{r}{2}$$
where we used Propostion \ref{locmin} at the last step.
It is now easy to conclude.

\section{Existence/instability of solitary waves for HW}

This section is devoted to the proof of Theorem \ref{HWar}.

\subsection{Existence of Minimizer}\label{minhw}

Even if Theorem \ref{HWar} is stated for the energy ${\mathcal E}_{hw}(u)$
on $S_r$ we shall work at the beginning on the unconstrained functional.
At the end we shall come back  to the constraint minimization
problem as stated in Theorem \ref{HWar}.

The first result concerns the fact that the constraint ${\mathcal M}$
(see \eqref{mathcalM})
is a natural constraint.

\begin{lem}\label{lagr}
Let $v\in H^{1/2}(\R^n)$ be such that
\begin{equation*}{\mathcal P}(v)=0,\quad {\mathcal E}_{hw}(v)
=\inf_{u\in {\mathcal M}} {\mathcal E}_{hw}(u)
\end{equation*}
then
$$\sqrt{-\Delta} v + v - v^p=0$$
\end{lem}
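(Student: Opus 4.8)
The plan is to argue by Lagrange multipliers. Since $v$ minimizes ${\mathcal E}_{hw}$ over the constraint ${\mathcal M}=\{{\mathcal P}=0\}$, once one checks that the constraint is nondegenerate at $v$, i.e. that ${\mathcal P}'(v)\neq 0$ in $H^{-1/2}(\R^n)$, the multiplier rule yields $\mu\in\R$ with ${\mathcal E}_{hw}'(v)=\mu\,{\mathcal P}'(v)$. Writing out the two differentials, ${\mathcal E}_{hw}'(v)=\sqrt{-\Delta}v+v-v|v|^{p-1}$ and ${\mathcal P}'(v)=\sqrt{-\Delta}v-\frac{n(p-1)}{2}v|v|^{p-1}$, the whole statement reduces to showing $\mu=0$. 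The nondegeneracy ${\mathcal P}'(v)\neq0$ is itself easy: if it vanished, pairing with $v$ and combining with ${\mathcal P}(v)=0$ would force $\tfrac12=\tfrac1{p+1}$, i.e. $p=1$, a contradiction. This step is also the natural place to record that the minimizer is nontrivial, $v\neq0$ (guaranteed because on ${\mathcal M}\setminus\{0\}$ the quantity $\|v\|_{\dot H^{1/2}}$ is bounded away from $0$ via Gagliardo--Nirenberg).

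To kill the multiplier I would exploit the dilation structure behind ${\mathcal P}$. With the $L^2$-preserving scaling $v_\lambda(x)=\lambda^{n/2}v(\lambda x)$ one has, on the Fourier side, the explicit laws $\|v_\lambda\|_{\dot H^{1/2}}^2=\lambda\|v\|_{\dot H^{1/2}}^2$, $\|v_\lambda\|_{2}^2=\|v\|_2^2$ and $\|v_\lambda\|_{p+1}^{p+1}=\lambda^{n(p-1)/2}\|v\|_{p+1}^{p+1}$, so that $\lambda\mapsto{\mathcal E}_{hw}(v_\lambda)$ is an explicit smooth function whose derivative at $\lambda=1$ is precisely ${\mathcal P}(v)$. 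Applying this same dilation to the Euler--Lagrange equation satisfied by $v$ produces a Pohozaev identity for the multiplier equation; this is where I would invoke the adaptation of the Pohozaev identity of \cite{secchi} to HW, which also bypasses the technical issue that the generator $x\cdot\nabla v$ need not lie in $H^{1/2}(\R^n)$.

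Concretely, rewrite ${\mathcal E}_{hw}'(v)=\mu\,{\mathcal P}'(v)$ as $a\sqrt{-\Delta}v+v-b\,v|v|^{p-1}=0$ with $a=1-\mu$ and $b=1-\mu\frac{n(p-1)}{2}$. The associated Pohozaev identity reads $\frac{a}{2}\|v\|_{\dot H^{1/2}}^2=b\,\frac{n(p-1)}{2(p+1)}\|v\|_{p+1}^{p+1}$, while the constraint ${\mathcal P}(v)=0$ gives $\frac12\|v\|_{\dot H^{1/2}}^2=\frac{n(p-1)}{2(p+1)}\|v\|_{p+1}^{p+1}$. Dividing the two identities, legitimate since $\|v\|_{\dot H^{1/2}}^2>0$ as $v\neq0$, yields $a=b$, i.e. $\mu\bigl(1-\frac{n(p-1)}{2}\bigr)=0$. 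Since we are in the $L^2$-supercritical regime $p>1+\frac2n$, we have $\frac{n(p-1)}{2}>1$, hence $\mu=0$ and ${\mathcal E}_{hw}'(v)=0$, which is exactly the claimed equation $\sqrt{-\Delta}v+v-v|v|^{p-1}=0$. The main obstacle I anticipate is the rigorous derivation of the Pohozaev identity at the mere $H^{1/2}$ level of regularity (equivalently, the justification of the dilation computation), together with ensuring $v\neq0$; both are handled by the cited \cite{secchi} adaptation and the nondegeneracy step, respectively, after which the supercriticality condition $\frac{n(p-1)}{2}\neq1$ does the decisive work.
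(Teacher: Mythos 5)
Your skeleton is the same as the paper's: apply the Lagrange multiplier rule on ${\mathcal M}$ (your explicit check that ${\mathcal P}'(v)\neq 0$ is a point where you are actually more careful than the paper), pair the resulting equation with $v$, use the constraint ${\mathcal P}(v)=0$, and then kill the multiplier with one additional dilation-type identity, exploiting $\frac{n(p-1)}{2}>1$. Your algebra is also correct: granting your identity $\frac a2\|v\|_{\dot H^{1/2}(\R^n)}^2 = \frac{bn(p-1)}{2(p+1)}\|v\|_{L^{p+1}(\R^n)}^{p+1}$, dividing by the constraint gives $a=b$ and hence $\mu=0$. (The worry about nontriviality can be dispatched more simply than by your Gagliardo--Nirenberg remark: for $v=0$ the conclusion holds trivially, so one may assume $v\neq 0$ throughout.)

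The gap is in how you obtain that third identity. You derive it as a Pohozaev identity for the multiplier equation $a\sqrt{-\Delta}v+v-b\,v|v|^{p-1}=0$, with $a=1-\mu$, $b=1-\mu\frac{n(p-1)}{2}$, and you propose to justify it by the adaptation of \cite{secchi}. But such justifications (regularity bootstrap, decay, truncation of the dilation generator $x\cdot\nabla v$) rely on the equation having the form $\sqrt{-\Delta}v+\omega v=f(v)$ with a positive coefficient in front of $\sqrt{-\Delta}$ and positive mass, i.e. on $a>0$ after normalization. The multiplier $\mu$ is unknown at this stage, and nothing in your argument excludes $\mu\geq 1$: if $a=0$ the equation degenerates to the algebraic relation $v=b\,v|v|^{p-1}$, and if $a<0$ the operator $a\sqrt{-\Delta}+1$ has spectrum crossing zero (a Helmholtz-type problem), solutions need not decay, the bootstrap fails, and no Pohozaev identity is provided by the cited source. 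So the citation does not cover all possible values of the multiplier. The paper's proof is engineered precisely to avoid this: it never extracts a Pohozaev identity from the Euler--Lagrange equation. Instead it observes that the scaling $v_\mu=\mu^{\frac{1}{p-1}}v(\mu x)$ maps ${\mathcal M}$ into itself (both terms of ${\mathcal P}$ scale like $\mu^{\frac{p+1}{p-1}-n}$), so minimality of $v$ on ${\mathcal M}$ yields the purely variational identity
$$\frac{d}{d\mu}\,{\mathcal E}_{hw}\big(\mu^{\frac{1}{p-1}}v(\mu x)\big)\Big|_{\mu=1}=0,$$
an explicit relation among $\|v\|_{\dot H^{1/2}(\R^n)}^2$, $\|v\|_{L^2(\R^n)}^2$, $\|v\|_{L^{p+1}(\R^n)}^{p+1}$ that requires no regularity of $v$ beyond $H^{1/2}$; combined with your pairing identity and the constraint, it forces $\mu=0$ by linear algebra alone. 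Replacing your Pohozaev step by this constraint-preserving-scaling step closes the gap and makes the whole proof purely variational.
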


\begin{proof}

We notice that since $v$ is minimizer then
$$\sqrt{-\Delta} v + v - v^p=\lambda(\sqrt{-\Delta} v - \frac{n(p-1)}{2}v^p)$$
We claim that $\lambda=0$.
Notice that by the equation above we get
\begin{equation}\label{multiplication}
\|v\|_{\dot H^{1/2}(\R^n)}^2+\|v\|_2^2 - \|v\|_{L^{p+1}(\R^n)}^{p+1}=
\lambda \|v\|_{\dot H^{1/2}(\R^n)}^2 - \frac{d\lambda (p-1)}{2}
\|v\|_{L^{p+1}(\R^n)}^{p+1}
\end{equation}
Moreover since ${\mathcal P}(v)=0$ we get
\begin{equation}\label{pohhozaev}
\|v\|_{\dot H^{1/2}(\R^n)}^2 = \frac{n(p-1)}{(p+1)} \|v\|_{L^{p+1}(\R^n)}^{p+1}.
\end{equation}
Next notice that we have the following rescaling invariance
$${\mathcal P}(u)=0 \Rightarrow {\mathcal P}(\mu^{\frac 1{p-1}} u(\mu x)=0$$
and hence since $v$ is a minimizer we get
$\frac d{d\mu} {\mathcal E}_{hw}(\mu^{\frac 1{p-1}} v(\mu x))_{|\mu=1}=0$
that by elementary computations gives
\begin{align}\label{invariance}
\frac 12(\frac{p+1}{p-1}-n)&\|v\|_{\dot H^{1/2}(\R^n)}^2+\frac 12(\frac{2}{p-1}-n)\|v\|_{L^2(\R^n)}^2\\\nonumber&-(\frac{p+1}{p-1}-n)\frac{1}{p+1} \|v\|_{L^{p+1}(\R^n)}^{p+1}=0.\end{align}
By combining \eqref{multiplication},
\eqref{pohhozaev}, \eqref{invariance} we get easily
$\lambda=0$.

\end{proof}

The next result concerns the proof of the existence of a minimizer
for the unconstrained problem.

\begin{lem}\label{uncos} Let $1+\frac 2n<p<1+\frac 2{n-1}$.
There exists $w\in H^{1/2}_{rad}(\R^n)$ such that
$${\mathcal P}(w)=0, \quad {\mathcal E}_{hw}(w)=\inf_{u\in \mathcal M} {\mathcal E}_{hw}(u).$$
\end{lem}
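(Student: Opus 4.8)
The plan is to show that $m:=\inf_{u\in\mathcal M}\mathcal E_{hw}(u)$ is attained, by first rewriting the energy on the constraint $\mathcal M$, then reducing to radially symmetric competitors, and finally running the direct method with the compact Strauss embedding. The first step is to exploit $\mathcal P(u)=0$, which reads $\|u\|_{L^{p+1}}^{p+1}=\frac{p+1}{n(p-1)}\|u\|_{\dot H^{1/2}}^2$, to eliminate the nonlinear term: on $\mathcal M$ one has
\[\mathcal E_{hw}(u)=\Big(\tfrac12-\tfrac1{n(p-1)}\Big)\|u\|_{\dot H^{1/2}}^2+\tfrac12\|u\|_2^2.\]
Since $p>1+\frac2n$ the coefficient $c_1:=\frac12-\frac1{n(p-1)}$ is strictly positive, so on $\mathcal M$ the energy is a positive-definite quadratic form comparable with $\|u\|_{H^{1/2}}^2$; hence $m\ge0$ is finite and every minimizing sequence is bounded in $H^{1/2}$. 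Feeding the Gagliardo--Nirenberg inequality $\|u\|_{L^{p+1}}^{p+1}\le C\|u\|_{\dot H^{1/2}}^{n(p-1)}\|u\|_2^{\,p+1-n(p-1)}$, whose two exponents are positive precisely when $1+\frac2n<p<1+\frac2{n-1}$, into the constraint identity gives for $u\in\mathcal M\setminus\{0\}$ a lower bound
\[\|u\|_{\dot H^{1/2}}^{\,n(p-1)-2}\,\|u\|_2^{\,p+1-n(p-1)}\ge c_0>0,\]
so the two norms cannot be small simultaneously; combined with the quadratic representation this forces $m>0$.

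Next I would reduce to the radial class by symmetrization. For $u\in\mathcal M$ let $u^*$ be its symmetric decreasing rearrangement: then $\|u^*\|_2=\|u\|_2$, $\|u^*\|_{L^{p+1}}=\|u\|_{L^{p+1}}$ and, by the fractional P\'olya--Szeg\H{o} inequality, $\|u^*\|_{\dot H^{1/2}}\le\|u\|_{\dot H^{1/2}}$, whence $\mathcal P(u^*)\le\mathcal P(u)=0$. Because $p+1>2$, the scalar map $\tau\mapsto\mathcal P(\tau u^*)$ is positive for small $\tau$ and $\le0$ at $\tau=1$, so a unique $\tau_0\in(0,1]$ yields $\tau_0u^*\in\mathcal M$; using the quadratic representation together with $\tau_0\le1$ and the rearrangement inequalities one gets $\mathcal E_{hw}(\tau_0u^*)\le\mathcal E_{hw}(u)$. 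Therefore $\inf_{\mathcal M\cap H^{1/2}_{rad}}\mathcal E_{hw}=m$, and it suffices to produce a radial minimizer. I would carry this out for $n\ge2$, where radial symmetry is what generates compactness (this is the case relevant to the instability statement of Theorem \ref{HWar}).

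Finally I would run the direct method in $H^{1/2}_{rad}$. Take a radial minimizing sequence $u_k\in\mathcal M$, bounded by the first step, with $u_k\rightharpoonup w$ in $H^{1/2}$. Since $2<p+1<\frac{2n}{n-1}$, the compact embedding $H^{1/2}_{rad}(\R^n)\hookrightarrow L^{p+1}(\R^n)$ gives $u_k\to w$ strongly in $L^{p+1}$; the constraint then shows $\frac12\|u_k\|_{\dot H^{1/2}}^2=\frac{n(p-1)}{2(p+1)}\|u_k\|_{L^{p+1}}^{p+1}\to\frac{n(p-1)}{2(p+1)}\|w\|_{L^{p+1}}^{p+1}$, and this limit is positive, since otherwise $\|u_k\|_{\dot H^{1/2}}\to0$ would contradict the lower bound of the first step together with the $H^{1/2}$-boundedness; hence $w\ne0$. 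Weak lower semicontinuity of $\|\cdot\|_{\dot H^{1/2}}^2$ and strong $L^{p+1}$ convergence give $\mathcal P(w)\le0$. If this were strict I would pick $\tau_0\in(0,1)$ with $\tau_0w\in\mathcal M$ and use the quadratic representation plus lower semicontinuity of both terms to obtain $\mathcal E_{hw}(\tau_0w)=\tau_0^2\big(c_1\|w\|_{\dot H^{1/2}}^2+\tfrac12\|w\|_2^2\big)\le\tau_0^2m<m$, contradicting $\tau_0w\in\mathcal M$; hence $\mathcal P(w)=0$, i.e. $w\in\mathcal M\cap H^{1/2}_{rad}$, and lower semicontinuity yields $\mathcal E_{hw}(w)\le m\le\mathcal E_{hw}(w)$, so $w$ is the desired radial minimizer.

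The main obstacle is the loss of compactness: the problem is translation invariant and $L^2$-supercritical, so a naive weak-limit argument may lose mass to vanishing or translation. This is circumvented by the reduction to radial functions (via rearrangement, which needs the fractional P\'olya--Szeg\H{o} inequality and the scalar rescaling to return to $\mathcal M$) and by the compact Strauss embedding, valid exactly because $H^{1/2}$-subcriticality gives $p+1<\frac{2n}{n-1}$. The \emph{rescaling back onto the manifold} trick appears twice and is the conceptual heart of both the symmetrization and the upgrade from $\mathcal P(w)\le0$ to $\mathcal P(w)=0$.
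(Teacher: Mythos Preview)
Your argument is correct and follows the same strategy as the paper: rewrite $\mathcal{E}_{hw}$ on $\mathcal{M}$ as a coercive quadratic form, reduce to radial competitors by Schwarz symmetrization plus a scalar rescaling back onto $\mathcal{M}$, and then run the direct method using the compact embedding $H^{1/2}_{rad}\hookrightarrow L^{p+1}$ together with the same rescaling trick to upgrade $\mathcal{P}(w)\le0$ to $\mathcal{P}(w)=0$. Your presentation is marginally cleaner in that the symmetrization step yields a pointwise inequality $\mathcal{E}_{hw}(\tau_0 u^*)\le\mathcal{E}_{hw}(u)$ (via the $\dot H^{1/2}$--form of the energy on $\mathcal M$) rather than the paper's asymptotic argument $\lambda_k\to1$, and your explicit restriction to $n\ge2$ matches the paper's own reliance on the Strauss compact embedding.
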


\begin{proof}
For simplicity we denote $\inf_{u\in \mathcal M} {\mathcal E}_{hw}(u):={\mathcal E}_0$.

Notice that we have
\begin{align}\label{equivebergy}
{\mathcal E}_{hw}(u)&= (\frac{n(p-1)}{2(p+1)} - \frac 1{p+1})\|u\|_{L^{p+1}(\R^n)}^{p+1}
+\frac 12\|u\|_{L^2(\R^n)}^2, \\\nonumber
&\forall u\in H^{1/2}(\R^n), \quad {\mathcal P}(u)=0\end{align}
and hence, since $(\frac{n(p-1)}{2(p+1)} - \frac 1{p+1})>0$ for $p>1+\frac 2n$ we get
${\mathcal E}_0\geq 0.$
Let $w_k\in \mathcal M$ be a minimizing sequence.

We shall prove first the compactness of minimizing sequences
by assuming radial symmetry, namely $w_k\in H^{1/2}_{rad}(\R^n)$ such that
\begin{equation}\label{minim}
{\mathcal E}_{hw}(w_k)\rightarrow {\mathcal E}_0, \quad w_k\in H^{1/2}_{rad}(\R^n), \quad {\mathcal P}(w_k)=0.
\end{equation}
In a second step we shall prove that it is not restrictive to assume
that $w_n$ can be assumed radially symmetric.

First of all notice that we get $\sup_n \|w_k\|_{H^{1/2}(\R^n)}<\infty$.
In fact by using the constraint ${\mathcal P}(w_k)=0$ it is sufficient to check that
\begin{equation}\label{sopdel}
\sup_k \|w_k\|_{L^2(\R^n)} + \|w_k\|_{L^{p+1}(\R^n)}<\infty\end{equation}
and it follows by the expression \eqref{equivebergy} of the energy on the constraint ${\mathcal P}(u)=0$.\\
The next step is to show that $\inf_k \|w_k\|_{L^{p+1}(\R^n)}^{p+1}>0$.
It follows by the following chain of inequalities
\begin{align*}\|w_k\|_{H^{1/2}(\R^n)}^2&=\frac{n(p-1)}{p+1}\|w_k\|_{L^{p+1}(\R^n)}^{p+1}
\\\nonumber & \leq C \|w_k\|_{L^2(\R^n)}^{\gamma_0} \|w_k\|_{L^{p+1}(\R^n)}^{2+\epsilon_0}
\leq C \|w_k\|_{H^{1/2}(\R^n)}^{2+\epsilon_0} \end{align*}
where we used the Gagliardo-Nirenberg inequality, the boundedness of $\|w_k\|_{L^2(\R^n)}$
(see \eqref{sopdel}) and the Sobolev embedding
$H^{1/2}(\R^n)\subset L^{p+1}(\R^n)$.
As a consequence we get
$\inf_k \|w_k\|_{H^{1/2}(\R^n)}^2>0$ and since ${\mathcal P}(w_k)=0$ the same lower bound occurs
for $\inf_k \|w_k\|_{L^{p+1}(\R^n)}^{p+1}>0$.

Next we introduce $\bar w\in H^{1/2}(\R^n)$ as the weak limit of $w_k$.
We are done if we show that the convergence is strong.
First of all notice that by the compactness of the Sobolev embedding
$H^{1/2}_{rad}(\R^n)\rightarrow L^{p+1}(\R^n)$,
we deduce
$w_k\rightarrow \bar w$ in $L^{p+1}(\R^n)$ and hence
$\bar w\neq 0$ since $\inf_n \|w_k\|_{L^{p+1}(\R^n)}^{p+1}>0$.
Next notice that since ${\mathcal P}(w_k)=0$
then
$$\frac 12 \|\bar w\|_{\dot H^{1/2}(\R^n)}^2 - \frac{n(p-1)}{2(p+1)} \|\bar w\|_{L^{p+1}(\R^n)}^{p+1}\leq 0.$$
It implies by a continuity argument
$$\exists \bar \lambda\in (0, 1] \hbox{ s.t. } {\mathcal P}(\bar \lambda \bar w)=0,$$
and in turn
\begin{align*}&{\mathcal E}_0\leq {\mathcal E}_{hw}(\bar \lambda \bar w)\\
\nonumber&= (\frac{n(p-1)}{2(p+1)} - \frac 1{p+1})
\bar \lambda^{p+1} \|\bar w\|_{L^{p+1}(\R^n)}^{p+1}
+ \frac 12 \bar \lambda^2 \|\bar w\|_{L^2(\R^n)}^2\\\nonumber
&\leq \bar \lambda^2 ((\frac{n(p-1)}{2(p+1)} - \frac 1{p+1})
 \|\bar w\|_{L^{p+1}(\R^n)}^{p+1}
+\frac 12 \|\bar w\|_{L^2(\R^n)}^2)\\\nonumber & \leq (\frac{n(p-1)}{2(p+1)} - \frac 1{p+1})
\|\bar w\|_{L^{p+1}(\R^n)}^{p+1}
+\frac 12 \|\bar w\|_{L^2(\R^n)}^2 .\end{align*}
Notice that in the last inequality we get equality only in the case $\bar \lambda=1$.
Moreover by \eqref{equivebergy} and \eqref{minim} we have
$$\big (\frac{n(p-1)}{2(p+1)} - \frac 1{p+1}\big ) \|\bar w\|_{L^{p+1}(\R^n)}^{p+1}
+\frac 12 \|\bar w\|_{L^2(\R^n)}^{2}\leq {\mathcal E}_0.$$
As a conclusion we deduce that above we have equality everywhere
and hence the unique possibility is that $\bar \lambda=1$ and we conclude.

Next we show via a Schwartz symmetrization argument
that it is not restrictive to assume the minimizing sequence to be radially symmetric.
Hence given $w_k\in H^{1/2}(\R^n)$ that satisfies \eqref{minim} (but not necessarily radially symmetric), then we can construct another
radially symmetric sequence
$u_k\in H^{1/2}_{rad}(\R^n)$ that satsifies \eqref{minim}.
We introduce $w_k^*$ as the Schwartz symmetrization of $w_k$.
Notice that we have by standard facts about Schwartz symmetrization
that
\begin{equation}\label{limin}\liminf_{k\rightarrow \infty} {\mathcal E}_{hw} (w_k^*)\leq {\mathcal E}_0 \hbox{ and }
 \|w_k^*\|_{H^{1/2}(\R^n)}^2\leq \frac{d(p-1)}{p+1}\|w_k^*\|_{L^{p+1}(\R^n)}^{p+1}.
 \end{equation}
Notice that in principle ${\mathcal P}(w_k^*)\leq 0$.
On the other hands
$$\exists \lambda_k\in (0, 1] \hbox{ s.t. } {\mathcal P}(\lambda_k w_k^*)=0.$$
We conclude if we show that $\lambda_k\rightarrow 1$.
In fact in this case it is easy to check that
$\lambda_k w_k^*\in H^{1/2}_{rad}(\R^n)$, $\lambda_k w_k^*\in \mathcal M$
and ${\mathcal E}_{hw} (\lambda_k w_k^*)-{\mathcal E}_{hw}(w_k^*)
\rightarrow 0$ and hence
we conclude by \eqref{limin}
that ${\mathcal E}_{hw} (\lambda_k w_k^*)\rightarrow {\mathcal E}_0$.

In order to prove $\lambda_k\rightarrow 1$ we notice that by
\eqref{equivebergy} we have
\begin{align*}
{\mathcal E}_0\leq {\mathcal E}_{hw}(\lambda_k w_k^*)&=
\big (\frac{n(p-1)}{2(p+1)} - \frac 1{p+1}\big ) \lambda_k^{p+1}
\|
w_k^*\|_{p+1}^{p+1}
+\frac 12\lambda_k^2 \|w_k^*\|_2^2
\\\nonumber & \leq \big (\frac{n(p-1)}{2(p+1)} - \frac 1{p+1}\big ) \lambda_k^{2}
\|
w_k^*\|_{p+1}^{p+1}
+\frac 12\lambda_k^2 \|w_k\|_{L^2(\R^n)}^2
\\\nonumber&=  \big (\frac{n(p-1)}{2(p+1)} - \frac 1{p+1}\big ) \lambda_k^{2}
\|
w_k\|_{p+1}^{p+1}
+\frac 12\lambda_k^2 \|w_k\|_{L^2(\R^n)}^2
=\lambda_k^2 {\mathcal E}_{hw} (w_k)\end{align*}
where we used
\eqref{equivebergy} at the last step. We deduce that $\lambda_k\rightarrow 1$
since ${\mathcal E}_{hw} (w_k)\rightarrow {\mathcal E}_0$.

\end{proof}

We can now deduce for every $r>0$
the existence of solitary waves belonging to $S_r$ that moreover
are minimizers of ${\mathcal E}_{hw}$ constraint to $S_r \cap{\mathcal M}$.
In fact let $w$ be as in Lemma \ref{uncos}. Notice that by Lemma \ref{lagr}
we get
$$\sqrt{-\Delta} w + w-w^p=0.
$$
Moreover it is clear that $w\in {\mathcal A}_{r_0}$
where $r_0=\|w\|_{L^2(\R^n)}^2$.
Hence the first part of Theorem \ref{HWar}
is proved for $r=r_0$.
The case of a generic $r$ can be achieved by a straightforward rescaling argument.


\subsection{Inflation of $H^{1/2}$-norm for ${\mathcal P} (f)<0$
and ${\mathcal E}_{hw}(f)<{\mathcal I}_r$}

In this section we follow the approach of \cite{BHL}. In the sequel the
radially symmetric function $\varphi:\R^n \rightarrow \R$ is defined as
\begin{equation}\varphi ( r )=\left\{\begin{matrix} \label{AS}
\frac{r^2}{2} \ \text{ for } r \leq 1;   \\
const  \ \text{ for } r \geq 10.  \\
\end{matrix}\right.
\end{equation}
with $\varphi''(t)\leq 2$ for $r>0$, and we introduce the rescaled function $\varphi_R:\R^n\rightarrow \R$ as $\varphi_R(x):=R^2\varphi(\frac{x}{R})$.\\We define the localized virial in the spirit of Ogawa-Tsutsumi \cite{OZ}
\begin{equation}\label{def:locvirial}
M_{\varphi}(u)= 2 \text{ Im} \int_{\R^n} \bar u \nabla \varphi \cdot \nabla u dx
\end{equation}
In Lemma A.1 of \cite{BHL} it is shown that $M_{\varphi}(u)$ can be bounded,
as follows:
\begin{equation}\label{lem:boundloc}
|M_{\varphi}(u)|\leq C \left( \|u\|_{\dot H^{1/2}(\R^n)}^2+\|u\|_{L^2(\R^n)}\|u\|_{\dot H^{1/2}(\R^n)} \right)
\end{equation}
where the constant $C$ depends only on $\|\nabla \varphi\|_{W^{1, \infty}(\R^n)}$ and on the space dimension.
The following Lemma is crucial for our result.
\begin{lem}[Lemma 2.1, \cite{BHL}]\label{eq:idvir}Let $n\geq 2$, for any $f \in H^{1}_{rad}(\R^n)$ we have:
\begin{align}\label{eq:BHL}
&\frac{d}{dt}M_{\varphi}(u)=\\\nonumber\int_0^{\infty} m^\frac{1}{2}\left(\int_{\R^n} 4 \bar{\partial_k u_m}(\partial_{lk}^2\varphi)\partial_l u_m-(\Delta^2 \varphi)|u_m|^2dx\right)&dm-\frac{2(p-1)}{p+1} \int_{\R^n}(\Delta \varphi)|u|^{p+1} dx
\end{align}
where $u_m(t,x):=\sqrt{\frac{1}{\pi}}\mathcal{F}^{-1}\left(\frac{\hat u(t, \xi)}{\xi^2+m^2}\right)$
and $u(t,x)$ is the unique solution to HW with initial condition $u(0,x)=f(x)$.
\end{lem}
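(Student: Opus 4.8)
The plan is to read $M_\varphi(u)=\langle u,\mathcal{A}_\varphi u\rangle$ as the expectation of the self-adjoint operator $\mathcal{A}_\varphi=-\im(2\nabla\varphi\cdot\nabla+\Delta\varphi)$ (one checks $\langle u,\mathcal{A}_\varphi u\rangle=2\,\text{Im}\int\bar u\,\nabla\varphi\cdot\nabla u$ by one integration by parts), and then to turn the time derivative into a commutator. Writing HW as $\partial_t u=-\im(\sqrt{-\Delta}\,u-u|u|^{p-1})$ and using the self-adjointness of $\sqrt{-\Delta}$ and of multiplication by $|u|^{p-1}$, a formal differentiation gives
$$\frac{d}{dt}M_\varphi(u)=\langle u,\im[\sqrt{-\Delta},\mathcal{A}_\varphi]u\rangle-\langle u,\im[\,|u|^{p-1},\mathcal{A}_\varphi]u\rangle.$$
The nonlinear commutator is the easy one: since $\Delta\varphi$ commutes with $|u|^{p-1}$, only $2\nabla\varphi\cdot\nabla$ contributes, $\im[\,|u|^{p-1},\mathcal{A}_\varphi]=-2(\nabla\varphi\cdot\nabla|u|^{p-1})$ as a multiplication operator, and using $|u|^2\nabla|u|^{p-1}=\tfrac{p-1}{p+1}\nabla|u|^{p+1}$ followed by one integration by parts one obtains exactly the term $-\frac{2(p-1)}{p+1}\int_{\R^n}(\Delta\varphi)|u|^{p+1}\,dx$. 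All the remaining work is in the linear commutator.

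For the linear term I would use the Balakrishnan resolvent representation of the half-Laplacian, $(-\Delta)^{1/2}=\frac1\pi\int_0^\infty m^{-1/2}\,\frac{-\Delta}{-\Delta+m}\,dm$, together with the algebraic identity $[\frac{-\Delta}{-\Delta+m},\mathcal{A}_\varphi]=m\,(-\Delta+m)^{-1}[-\Delta,\mathcal{A}_\varphi](-\Delta+m)^{-1}$. The factor $m^{-1/2}\cdot m=m^{1/2}$ is precisely what produces the weight in \eqref{eq:BHL}, and the two flanking resolvents are what identify the regularized function $u_m$ (the resolvent applied to $u$, up to the normalizing constant $\sqrt{1/\pi}$). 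Moving the resolvents onto both slots of the pairing yields
$$\langle u,\im[\sqrt{-\Delta},\mathcal{A}_\varphi]u\rangle=\int_0^\infty m^{1/2}\,\langle u_m,\im[-\Delta,\mathcal{A}_\varphi]u_m\rangle\,dm.$$
The inner integrand is now the commutator of $\mathcal{A}_\varphi$ with the \emph{local} operator $-\Delta$, i.e. the classical Schr\"odinger localized-virial quantity for $u_m$; two integrations by parts give $\langle u_m,\im[-\Delta,\mathcal{A}_\varphi]u_m\rangle=\int_{\R^n}4\,\text{Re}\big(\overline{\partial_k u_m}\,(\partial^2_{lk}\varphi)\,\partial_l u_m\big)-(\Delta^2\varphi)|u_m|^2\,dx$, which is exactly the bracketed expression of \eqref{eq:BHL}.

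The hard part will be the rigorous justification at the low regularity $f\in H^1_{rad}(\R^n)$ supplied by Theorem \ref{t.le0}, where $\sqrt{-\Delta}\,u$ and these commutators barely make sense. I would first prove the identity for a smooth, well-decaying approximating flow, where differentiating under the integral, the integrations by parts, and the interchange of $\int_0^\infty dm$ with the spatial pairing are all legitimate, and then pass to the limit. For the $m$-integral one must check absolute convergence at both endpoints: near $m=0$ the weight $m^{1/2}$ is harmless, while near $m=\infty$ one exploits the decay in $m$ of $\|\nabla u_m\|_{L^2}$ and $\|u_m\|_{L^2}$ (using $u\in H^{1/2}$, the compact support of $\nabla\varphi$, and the boundedness of $D^2\varphi$ and $\Delta^2\varphi$), together with the a priori control \eqref{lem:boundloc} on $M_\varphi(u)$ itself. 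The radial symmetry of $f$ and the $H^1$-bound from Theorem \ref{t.le0} are exactly what keep $u(t)$ in a class where the approximation and these manipulations are valid; this is the content of Lemma 2.1 of \cite{BHL}, whose computation I would follow.
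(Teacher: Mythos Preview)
The paper does not supply its own proof of this lemma; as the label indicates, it is quoted from \cite{BHL} and used as a black box (the next line after the statement already moves on to the Stein--Weiss inequality). Your sketch reproduces precisely the argument of \cite{BHL}: the commutator reformulation via the self-adjoint $\mathcal{A}_\varphi=-\im(2\nabla\varphi\cdot\nabla+\Delta\varphi)$, the Balakrishnan representation of $(-\Delta)^{1/2}$ combined with the resolvent--commutator identity to convert the nonlocal bracket $\im[\sqrt{-\Delta},\mathcal{A}_\varphi]$ into an $m$-integral of the classical local virial bracket $\im[-\Delta,\mathcal{A}_\varphi]$ evaluated on the regularizations $u_m$, the pointwise identity $|u|^2\nabla|u|^{p-1}=\tfrac{p-1}{p+1}\nabla|u|^{p+1}$ for the nonlinear piece, and the approximation-by-smooth-solutions strategy for the rigorous justification at $H^1_{rad}$ regularity. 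So there is nothing in the present paper to compare against, and your proposal is correct and coincides with the source the paper defers to.
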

In the sequel we use the following Stein--Weiss inequality for radially symmetric functions due to Rubin \cite{R} in general space dimension $n$.
\begin{thm}[Rubin  \cite{R}]\label{thm:Rubin}
Let $n\ge 2$ and $0<s<n$. Then for all $u \in \dot H_{rad}^{s}(\R^n)$ we have:
\begin{equation}\label{DeNapoli}
\left(\int_{\R^n} |u(x)|^r |x|^{-\beta r} dx\right)^{1/r}\le C(n,s,r,\beta)\|u \|_{\dot{H}^{s}(\R^n)},
\end{equation}
where $r\ge 2$ and
\begin{equation}\label{1}
-(n-1)\big(\frac{1}{2}-\frac{1}{r}\big)\le\beta<\frac{n}{r},
\end{equation}
\begin{equation}\label{2}
\frac{1}{r}=\frac{1}{2}+\frac{\beta-s}{n}.
\end{equation}
\end{thm}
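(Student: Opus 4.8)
The plan is to read \eqref{DeNapoli} as a weighted $L^2\to L^r$ bound for the Riesz potential restricted to radial functions, and then to exploit radiality to collapse the $n$-dimensional estimate to a one-dimensional weighted inequality closable by a Schur test. First I would set $g=(-\Delta)^{s/2}u$, so that $\|g\|_{L^2(\R^n)}=\|u\|_{\dot H^s(\R^n)}$ and, up to a dimensional constant, $u=I_sg$, where
\begin{equation*}
I_sg(x)=\int_{\R^n}|x-y|^{-(n-s)}g(y)\,dy .
\end{equation*}
Since $u$ is radial and $I_s$ commutes with rotations, one may take $g$ radial too, so \eqref{DeNapoli} is equivalent to $\big\||x|^{-\beta}I_sg\big\|_{L^r(\R^n)}\le C\|g\|_{L^2(\R^n)}$ for radial $g$. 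Two of the three hypotheses are then structural: testing the inequality on dilates $g(\lambda\,\cdot)$ and matching powers of $\lambda$ forces precisely the scaling relation \eqref{2}, so \eqref{2} is necessary; and the upper bound $\beta<n/r$ in \eqref{1} is exactly local integrability of the weight $|x|^{-\beta r}$ at the origin, needed for finiteness on test data.

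Next I would perform the radial reduction. Writing $x=\rho\,\omega$, $y=\sigma\,\theta$ with $\rho,\sigma>0$ and $\omega,\theta\in S^{n-1}$ and averaging the kernel over the sphere gives
\begin{equation*}
I_sg(\rho)=\int_0^\infty K(\rho,\sigma)\,g(\sigma)\,\sigma^{n-1}\,d\sigma,\qquad
K(\rho,\sigma)=\int_{S^{n-1}}\big|\rho\,\omega-\sigma\,\theta\big|^{-(n-s)}\,d\theta .
\end{equation*}
The kernel is homogeneous of degree $-(n-s)$, i.e. $K(\lambda\rho,\lambda\sigma)=\lambda^{-(n-s)}K(\rho,\sigma)$, so after passing to radial densities the whole problem becomes the $L^2\to L^r$ boundedness of a one-dimensional integral operator with a homogeneous kernel against the power weights dictated by \eqref{1}--\eqref{2}. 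Homogeneity lets me normalise $\sigma=1$ and reduce everything to controlling the single profile $\tau\mapsto K(\tau,1)$, whose behaviour I would track in the three regimes $\tau\to0$, $\tau\to\infty$, and $\tau\to1$, the last being the only place the near-diagonal singularity of $|x-y|^{-(n-s)}$ enters and being integrable in the stated range $0<s<n$.

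The main obstacle is this final one-dimensional weighted estimate, and specifically pinning down the \emph{lower} endpoint $\beta\ge-(n-1)(\tfrac12-\tfrac1r)$ in \eqref{1}, which is the genuinely radial improvement over the classical Stein--Weiss range. Note that this gain is active only for $r>2$: at $r=2$, condition \eqref{2} forces $\beta=s\ge0$ and the statement reduces to the fractional Hardy inequality $\||x|^{-s}u\|_{L^2}\le C\|u\|_{\dot H^s}$, with no radial advantage. I would dispose of $r=2$ this way, and for $r>2$ run a Schur test on the homogeneous operator with power test functions $\sigma^{a}$; by homogeneity the required bound comes down to finiteness of $\int_0^\infty K(1,\sigma)\,\sigma^{a}\,d\sigma$ for the exponent $a$ matched to \eqref{2}, which holds precisely on the stated window, and whose borderline divergence as $\beta\downarrow-(n-1)(\tfrac12-\tfrac1r)$ reproduces the sharp lower endpoint.

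The reason negative $\beta$ is affordable is that $|x|^{-\beta r}=|x|^{|\beta|r}$ is then a weight growing at infinity, and radial $\dot H^s$ functions decay away from the origin at the Strauss rate $|x|^{-(n-1)/2}$; this is exactly why the threshold carries the factor $(n-1)/2$. Thus the crux is to quantify that radial decay sharply enough in the Schur test to reach the endpoint without loss. For the passage from the $L^2$ input norm to the $L^r$ output norm I would either adapt the Schur test to the mixed $L^2\to L^r$ setting (using Minkowski's integral inequality in $\sigma$ together with the spherical average defining $K$) or interpolate between the $r=2$ estimate and an $L^2\to L^\infty$ radial decay bound of Strauss type; in either route condition \eqref{2} guarantees the interpolation or test-exponents are consistent, and scaling then closes the argument.
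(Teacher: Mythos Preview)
The paper does not supply a proof of this statement: Theorem~\ref{thm:Rubin} is quoted as an external result of Rubin~\cite{R} and is immediately specialised to the particular choice $r=p+1$, $s=\tfrac14$, $\beta=\tfrac{p(-2n+1)+2n+1}{4(p+1)}$ to obtain \eqref{inegcruc} and then the decay estimate \eqref{eq:crucdec}. There is therefore nothing in the paper to compare your argument against.

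That said, your outline is essentially the standard route to this inequality and is close in spirit to Rubin's own approach: write $u=I_s g$ with $g=(-\Delta)^{s/2}u$, average the Riesz kernel over spheres to obtain a one-dimensional integral operator with a homogeneous kernel $K(\rho,\sigma)=\rho^{-(n-s)}\Phi(\sigma/\rho)$, and then prove the weighted $L^2((0,\infty);\sigma^{n-1}d\sigma)\to L^r((0,\infty);\rho^{n-1}d\rho)$ bound by a Hardy--Littlewood--P\'olya type criterion for homogeneous kernels. Your identification of the scaling condition \eqref{2} as forced by dilation invariance, of $\beta<n/r$ as local integrability of the weight, and of the lower endpoint $\beta\ge -(n-1)(\tfrac12-\tfrac1r)$ as encoding the Strauss-type radial decay is all correct. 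The one place your sketch is genuinely incomplete is the $L^2\to L^r$ step for $r>2$: the classical Schur test gives $L^p\to L^p$, and your suggested fixes (Minkowski plus the spherical average, or interpolation with an $L^\infty$ endpoint) need to be made precise. The cleanest way to close this is to use the explicit criterion for homogeneous kernels on the half-line, namely that an operator with kernel $\rho^{-1}\Psi(\sigma/\rho)$ is bounded $L^p(d\sigma)\to L^q(d\rho)$ under the appropriate scaling relation provided $\int_0^\infty |\Psi(t)|\,t^{-1/p'}\,dt<\infty$; after absorbing the weights and the Jacobians into $\Psi$, checking this integral converges exactly on the window \eqref{1} is a direct computation using the asymptotics $K(1,\sigma)\sim\sigma^{s-1}$ as $\sigma\to0$, $K(1,\sigma)\sim\sigma^{-(n-s)}$ as $\sigma\to\infty$, and the integrable singularity at $\sigma=1$.
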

As a special case  of Rubin theorem when the dimension is $n\geq 2$,  $r=p+1$, $s=\frac 14$, $\beta=\frac{p(-2n+1)+2n+1}{4(p+1)}$ we have the following inequality
\begin{equation}\label{inegcruc}
\left(\int_{\R^n} |u (x)|^{p+1} |x|^{-\frac{p(-2n+1)+2n+1}{4}} dx\right)^\frac{1}{p+1}\le C \|u\|_{\dot H^{1/4}(\R^n)}
\end{equation}
that holds if $1 <p \leq 3$, which is satisfied since
we are assuming $1+\frac 2n <p<1+\frac 2{n-1}$.  As a byproduct of \eqref{inegcruc}
and by noticing that $\frac{p(-2n+1)+2n+1}{4}<0$  (this follows by the fact $p>1+\frac 2n$)
we have the following crucial decay:
\begin{align}\label{eq:crucdec}
\left(\int_{|x|\geq R} |u (x)|^{p+1}  dx\right) &\leq CR^{\frac{p(-2n+1)+2n+1}{4}}\|u\|_{\dot H^{1/4}(\R^n)}^{p+1}\\\nonumber &\leq CR^{\frac{p(-2n+1)+2n+1}{4}}\|u\|_{L^2(\R^n)}^{\frac{p+1}{2}}\|u\|_{\dot H^{1/2}(\R^n)}^{\frac{p+1}{2}}.
\end{align}

We shall also need the following result.

\begin{lem}\label{poho}
For every $\delta, r>0$ we have
$$\sup_{\{u\in S_r| {\mathcal P}(u)<0,
{\mathcal E}_{hw}(u)\leq {\mathcal I}_r-\delta\}}
{\mathcal P}(u)<0.$$
\end{lem}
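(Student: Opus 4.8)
The plan is to exploit the $L^2$-preserving scaling $u_\lambda(x):=\lambda^{n/2}u(\lambda x)$, which keeps $u_\lambda$ on $S_r$ for every $\lambda>0$, and to reduce the statement to a one-dimensional concavity estimate along this fiber. A direct computation gives $\|u_\lambda\|_{\dot H^{1/2}(\R^n)}^2=\lambda\|u\|_{\dot H^{1/2}(\R^n)}^2$ and $\|u_\lambda\|_{L^{p+1}(\R^n)}^{p+1}=\lambda^{\gamma}\|u\|_{L^{p+1}(\R^n)}^{p+1}$ with $\gamma=\frac{n(p-1)}{2}$, while the $L^2$ norm is frozen. Setting $f(\lambda):={\mathcal E}_{hw}(u_\lambda)$ one finds $f(\lambda)=\frac12\lambda\|u\|_{\dot H^{1/2}(\R^n)}^2+\frac r2-\frac{1}{p+1}\lambda^{\gamma}\|u\|_{L^{p+1}(\R^n)}^{p+1}$ and, crucially, the identity ${\mathcal P}(u_\lambda)=\lambda f'(\lambda)$; in particular ${\mathcal P}(u)=f'(1)$.

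The key structural fact is that $f$ is strictly concave on $(0,\infty)$: since we are in the $L^2$-supercritical regime $p>1+\frac2n$ we have $\gamma>1$, hence $f''(\lambda)=-\frac{\gamma(\gamma-1)}{p+1}\lambda^{\gamma-2}\|u\|_{L^{p+1}(\R^n)}^{p+1}<0$ (note that ${\mathcal P}(u)<0$ forces $\|u\|_{L^{p+1}(\R^n)}>0$, and $\|u\|_{\dot H^{1/2}(\R^n)}>0$ because $u\in S_r$ with $r>0$). As $f'(0^+)=\tfrac12\|u\|_{\dot H^{1/2}(\R^n)}^2>0$ and $f'(\lambda)\to-\infty$, the function $f'$ is strictly decreasing with a unique zero $\lambda^*$, which is the global maximum of $f$, characterized by $f'(\lambda^*)=0$, i.e. ${\mathcal P}(u_{\lambda^*})=0$, so that $u_{\lambda^*}\in S_r\cap{\mathcal M}$. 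Since ${\mathcal P}(u)=f'(1)<0$ and $f'$ is decreasing, one gets $\lambda^*\in(0,1)$. The admissibility $u_{\lambda^*}\in S_r\cap{\mathcal M}$ forces $f(\lambda^*)\geq{\mathcal I}_r$, while by hypothesis $f(1)={\mathcal E}_{hw}(u)\leq{\mathcal I}_r-\delta$; subtracting yields $f(\lambda^*)-f(1)\geq\delta$.

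To conclude I would close the loop quantitatively using concavity at $\lambda=1$: the tangent line estimate gives $f(\lambda^*)\leq f(1)+f'(1)(\lambda^*-1)$, that is $f(\lambda^*)-f(1)\leq |f'(1)|(1-\lambda^*)$. Since $0<1-\lambda^*<1$, this implies $|{\mathcal P}(u)|=|f'(1)|\geq f(\lambda^*)-f(1)\geq\delta$, and as ${\mathcal P}(u)<0$ we obtain the uniform bound ${\mathcal P}(u)\leq-\delta$ valid for every admissible $u$; passing to the supremum gives the claim. The argument is essentially self-contained, and the only point demanding care is the interior character of the maximizer $\lambda^*$ together with the sign $f''<0$: these are precisely where the hypothesis $p>1+\frac2n$ (equivalently $\gamma>1$) enters, since the $L^2$-subcritical or critical cases would destroy the concavity of $f$ and hence the whole mechanism.
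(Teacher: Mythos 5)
Your proof is correct, and it takes a genuinely different route from the paper's. The paper argues by contradiction: it takes a sequence $u_k\in S_{r_0}$ with ${\mathcal E}_{hw}(u_k)\leq {\mathcal I}_{r_0}-\delta_0$ and ${\mathcal P}(u_k)\to 0^-$, renormalizes it onto ${\mathcal M}$ by \emph{amplitude} scaling $u_k\mapsto \lambda_k u_k$ (which leaves the sphere $S_{r_0}$, since $\|\lambda_k u_k\|_{L^2}^2=\lambda_k^2 r_0$), proves $\lambda_k\to 1$ after establishing uniform upper and lower bounds on $\|u_k\|_{\dot H^{1/2}}$ via Gagliardo--Nirenberg, and then needs the continuity of $r\mapsto{\mathcal I}_r$ (asserted there "by elementary considerations") to compare ${\mathcal E}_{hw}(\lambda_k u_k)\geq {\mathcal I}_{r_k}\to{\mathcal I}_{r_0}$ with the energy gap and reach a contradiction. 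You instead work with each admissible $u$ individually, using the mass-preserving dilation $u_\lambda=\lambda^{n/2}u(\lambda x)$ --- incidentally the same scaling the paper itself uses later, in the instability subsection, through the functions $h$ and $g$ --- and exploit the strict concavity of the fiber energy $f(\lambda)={\mathcal E}_{hw}(u_\lambda)$ together with the identity ${\mathcal P}(u_\lambda)=\lambda f'(\lambda)$: the projection $u_{\lambda^*}\in S_r\cap{\mathcal M}$ with $\lambda^*\in(0,1)$ gives $f(\lambda^*)\geq {\mathcal I}_r$ without ever leaving the sphere, and the tangent-line inequality converts the energy gap $\delta$ into the quantitative bound ${\mathcal P}(u)\leq-\delta$. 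Your argument buys three things: it is not a compactness/contradiction argument but yields an explicit uniform bound; it avoids the Gagliardo--Nirenberg a priori bounds \eqref{sup}--\eqref{inf}; and it completely bypasses the continuity of ${\mathcal I}_r$ in $r$, which is the one step the paper leaves unproved. The only points demanding care --- strict concavity from $\gamma=\frac{n(p-1)}{2}>1$, positivity of $\|u\|_{L^{p+1}}$ from ${\mathcal P}(u)<0$, and $\lambda^*<1$ from $f'(1)<0$ --- are all handled correctly in your write-up.
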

\begin{proof}
Assume by the absurd that it is false, then for some $\delta_0, r_0>0$,
we get
$$\exists u_k\in S_{r_0} \hbox{ s.t. }  {\mathcal E}_{hw}(u_k)\leq {\mathcal I}_{r_0}-\delta_0,
{\mathcal P}(u_k)<0,
{\mathcal P}(u_k)\rightarrow 0.$$
As a first remark we get
\begin{equation}\label{sup}\sup_k \|u_k\|_{\dot H^{1/2}(\R^n)}<\infty.
\end{equation}
In fact it follows by
$$\frac{np-n-2}{2n(p-1)}\|u_k\|_{\dot H^{1/2}(\R^n)}^2={\mathcal E}_{hw}(u_k) - \frac 2{n(p-1)}
{\mathcal P}(u_k)$$
and we conclude since $\limsup$ of the r.h.s. is below ${\mathcal I}_{r_0}-\delta_0$.
Moreover we have
\begin{equation}\label{inf}\inf_k \|u_k\|_{\dot H^{1/2}(\R^n)}>0.\end{equation}
In fact notice that by assumption
\begin{align}\label{II}\|u_k\|_{\dot H^{1/2}(\R^n)}^2&= \frac{n(p-1)}{p+1}
\|u_k\|_{L^{p+1}(\R^n)}^{p+1}+{\mathcal P}(u_k)
\\\nonumber &\hbox{ where } {\mathcal P}(u_k)\rightarrow 0, {\mathcal P}(u_k)<0.
\end{align}
By combining this fact with the Gagliardo-Nirenberg inequality
and by recalling that $\|u_k\|_{L^2(\R^n)}^2=r_0>0$
we get for suitable universal constants $C_0, \epsilon_0>0$ (that depend from $p,r_0$)
$$\|u_k\|_{\dot H^{1/2}(\R^n)}^2\leq C_0 \|u_k\|_{\dot H^{1/2}(\R^n) }^{2+\epsilon_0},$$
and it
implies \eqref{inf}.
Notice also that by a simple continuity argument
\begin{equation}\label{rsical}
\exists \lambda_k \in (0, 1) \hbox{ s.t. } {\mathcal P}(\lambda_k u_k)=0.\end{equation}
We claim that $\lambda_k\rightarrow 1$. In fact we get by definition of ${\mathcal P}$
we get:
\begin{equation}\label{I}\lambda_k^2\|u_k\|_{\dot H^{1/2}(\R^n)}^2= \frac{n(p-1)}{p+1}
\lambda_k^{p+1} \|u_k\|_{L^{p+1}(\R^n)}^{p+1}.\end{equation}
By combining the identities \eqref{I} and \eqref{II} above we get
$$\frac{n(p-1)}{p+1}
(\lambda_k^{p-1} -1) \|u_k\|_{L^{p+1}(\R^n)}^{p+1}={\mathcal P}(u_k)\rightarrow 0.$$
We conclude that $\lambda_k\rightarrow 1$
provided that we show $\inf_k \|u_k\|_{L^{p+1}(\R^n)}>0$. Of course it is true
otherwise we get
\begin{equation}\label{limit}0=\lim_{k\rightarrow \infty}
{\mathcal P}(u_k)=\lim_{k\rightarrow \infty} \frac 12
\|u_k\|_{\dot H^{1/2}(\R^n)}^2\end{equation}
which is in contradiction with \eqref{inf}.
As a consequence of the fact $\lambda_k\rightarrow 1$ we deduce
$\|\lambda_k u_k\|_{2}^2=r_k\rightarrow r_0$
and hence by \eqref{rsical} we get
${\mathcal E}_{hw}(\lambda_k u_k) \geq {\mathcal I}_{r_k}\rightarrow
{\mathcal I}_{r_0}$
(the last limit follows by elementary considerations). In particular we get
${\mathcal E}_{hw}(\lambda_k u_k) \geq {\mathcal I}_{r_0}-\frac{\delta_0}2$.
Moreover
\begin{align*}&{\mathcal E}_{hw}(u_k) - {\mathcal E}_{hw}(\lambda_k u_k)\\\nonumber
=\frac 12 (1-\lambda_k^2)\|u_k\|_{\dot
H^{1/2}(\R^n)}^2 &- \frac{1}{p+1}(1- \lambda_k^{p+1}) \|u_k\|_{L^{p+1}(\R^n)}^{p+1}\rightarrow 0\end{align*}
where we used \eqref{sup} with $\lambda_k\rightarrow 1$.
We get a
contradiction since
 ${\mathcal E}_{hw}(\lambda_k u_k) \geq {\mathcal I}_{r_0}-\frac{\delta_0}2$ and ${\mathcal E}_{hw}(u_k) \leq {\mathcal I}_{r_0}-\delta_0.$

\end{proof}

\begin{lem}[Localized virial identity for HW]\label{lem:ass}
There exists a constant $C>0$ such that
\begin{equation}\label{bridg}\frac{d}{dt}M_{\varphi_R}(u)\leq 4 {\mathcal P}(u) +C(R^{-1}+R^{\frac{p(-2n+1)+2n+1}{4}}\|u\|_{L^2(\R^n)}^{\frac{p+1}{2}}\|u\|_{\dot H^{1/2}(\R^n)}^{\frac{p+1}{2}})\end{equation}
for any $u(t,x)$ radially symmetric solution to HW.
\end{lem}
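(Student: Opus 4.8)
The plan is to start from the exact identity of Lemma \ref{eq:idvir} applied with the rescaled weight $\varphi_R$, and then to split every term into a ``bulk'' contribution (where $\varphi_R$ agrees with the pure quadratic $|x|^2/2$) plus an error localized in $\{|x|\ge R\}$. First I would record the relevant properties of $\varphi_R(x)=R^2\varphi(x/R)$: on $\{|x|\le R\}$ one has $\nabla\varphi_R(x)=x$, $\partial^2_{lk}\varphi_R=\delta_{lk}$, $\Delta\varphi_R=n$ and $\Delta^2\varphi_R=0$; globally $\partial^2_{lk}\varphi_R$ is bounded (thanks to $\varphi''\le 2$ and the radial structure), $\Delta\varphi_R$ is bounded, while $\Delta^2\varphi_R$ is supported in $\{R\le|x|\le 10R\}$ with $|\Delta^2\varphi_R|\le CR^{-2}$. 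Crucially, both the Hessian deviation $\partial^2_{lk}\varphi_R-\delta_{lk}$ and the defect $n-\Delta\varphi_R$ vanish on $\{|x|\le R\}$.

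For the nonlinear term I would write
\[
-\frac{2(p-1)}{p+1}\int_{\R^n}(\Delta\varphi_R)|u|^{p+1}\,dx = -\frac{2n(p-1)}{p+1}\|u\|_{L^{p+1}(\R^n)}^{p+1}+\frac{2(p-1)}{p+1}\int_{\R^n}(n-\Delta\varphi_R)|u|^{p+1}\,dx,
\]
so that the first summand is exactly the potential part of $4{\mathcal P}(u)$, and the remainder is supported in $\{|x|\ge R\}$ and bounded by $C\int_{|x|\ge R}|u|^{p+1}\,dx$. Here the Rubin / Stein--Weiss decay \eqref{eq:crucdec} applies directly and produces the term $R^{\frac{p(-2n+1)+2n+1}{4}}\|u\|_{L^2(\R^n)}^{\frac{p+1}{2}}\|u\|_{\dot H^{1/2}(\R^n)}^{\frac{p+1}{2}}$ in the statement.

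For the kinetic term I would likewise decompose $\partial^2_{lk}\varphi_R=\delta_{lk}+(\partial^2_{lk}\varphi_R-\delta_{lk})$. The piece with $\delta_{lk}$ yields $\int_0^\infty m^{1/2}\,4\|\nabla u_m\|_{L^2(\R^n)}^2\,dm$, which by the normalization of $u_m$ fixed in Lemma \ref{eq:idvir} equals $2\|u\|_{\dot H^{1/2}(\R^n)}^2$; this is the kinetic part of $4{\mathcal P}(u)$, and is nothing but the global virial identity for HW. It then remains to control the two kinetic errors,
\[
\int_0^\infty m^{1/2}\!\int_{|x|\ge R}4\,\overline{\partial_k u_m}\,(\partial^2_{lk}\varphi_R-\delta_{lk})\,\partial_l u_m\,dx\,dm,\qquad \int_0^\infty m^{1/2}\!\int_{\R^n}(\Delta^2\varphi_R)|u_m|^2\,dx\,dm,
\]
and to show they are $\le CR^{-1}\|u\|_{L^2(\R^n)}^2$; since the mass is conserved this is exactly the $CR^{-1}$ term of the statement.

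I expect this last step to be the main obstacle. Because $\sqrt{-\Delta}$ is nonlocal the auxiliary functions $u_m$ are not compactly supported even when $u$ is, so one cannot simply discard their mass outside $\{|x|\ge R\}$; instead one must exploit the quantitative decay of $u_m$ and of the resolvent kernels, precisely as in the estimates behind \eqref{lem:boundloc} (Lemma A.1 of \cite{BHL}) together with the radiality assumption. Using that $\partial^2_{lk}\varphi_R-\delta_{lk}$ is bounded and supported in $\{|x|\ge R\}$, and that $|\Delta^2\varphi_R|\le CR^{-2}$ is supported in $\{R\le|x|\le 10R\}$, these tail contributions can be bounded by a negative power of $R$ times the conserved mass, giving the claimed $R^{-1}$ gain. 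Collecting the bulk identity $2\|u\|_{\dot H^{1/2}(\R^n)}^2-\frac{2n(p-1)}{p+1}\|u\|_{L^{p+1}(\R^n)}^{p+1}=4{\mathcal P}(u)$ with the two error bounds then yields \eqref{bridg}.
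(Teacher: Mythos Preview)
Your treatment of the nonlinear term is exactly the paper's, and for the bilaplacian error the paper likewise simply quotes the bound $\big|\int_0^\infty m^{1/2}\int(\Delta^2\varphi_R)|u_m|^2\,dx\,dm\big|\le CR^{-1}$ from \cite{BHL}. The gap is in the first kinetic error. You assert that
\[
E:=\int_0^\infty m^{1/2}\!\int_{|x|\ge R}4\,\overline{\partial_k u_m}\,(\partial^2_{lk}\varphi_R-\delta_{lk})\,\partial_l u_m\,dx\,dm
\]
is $O(R^{-1})$ in terms of the mass. This is not correct and is not rescued by radiality: the integrand is a gradient quadratic form in $u_m$, and the $m$-integral of $m^{1/2}\|\nabla u_m\|_{L^2}^2$ reconstructs (a constant times) $\|u\|_{\dot H^{1/2}}^2$. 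There is no mechanism forcing the portion over $\{|x|\ge R\}$ to decay like a negative power of $R$ uniformly in $u$ --- radial pointwise decay helps for $|u|^{p+1}$ (this is exactly where \eqref{eq:crucdec} enters), but it gives nothing for the localization of the $\dot H^{1/2}$-energy carried by the extension $u_m$.

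The paper, following \cite{BHL}, never treats $E$ as a small error. It uses instead the \emph{sign} built into the choice of $\varphi$: the construction guarantees that the Hessian of $\varphi_R$ is dominated by the identity as a quadratic form, so that $\overline{\partial_k u_m}(\partial^2_{lk}\varphi_R)\partial_l u_m\le |\nabla u_m|^2$ pointwise. This yields directly the inequality
\[
4\int_0^\infty m^{1/2}\!\int_{\R^n}\overline{\partial_k u_m}\,(\partial^2_{lk}\varphi_R)\,\partial_l u_m\,dx\,dm \;\le\; 2\|u\|_{\dot H^{1/2}(\R^n)}^2,
\]
with no $R$-remainder whatsoever. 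Thus the only $R$-dependent errors in \eqref{bridg} come from the bilaplacian term and from the nonlinear tail; the Hessian contribution enters only through this sign, which is precisely why \eqref{bridg} is stated as an inequality rather than an identity. Your decomposition is fine, but the correct conclusion for $E$ is ``$E\le 0$, drop it'' rather than ``$E=O(R^{-1})$''.
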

\begin{proof}
In \cite{BHL} it is shown (by choosing $s=\frac 12$) the following estimates:
$$4 \int_0^{\infty} m^{1/2} \int_{\R^n} {\partial_k \bar u_m}(\partial_{lk}\varphi_R)\partial_l u_m dx dm\leq 2 \|u(t)\|_{\dot H^{1/2}(\R^n)}^2$$
and
$$\left|  \int_0^{\infty} m^{1/2} \int_{\R^n}(\Delta^2 \varphi_R)|u_m|^2dx dm\right|\leq C R^{-1}.$$
Concerning the last term in \eqref{eq:BHL}, namely  $-\frac{2(p-1)}{p+1} \int_{\R^n}(\Delta \varphi_R)|u|^{p+1} dx$, we have
\begin{align*}-\frac{2(p-1)}{p+1} \int_{\R^n}(\Delta \varphi_R)|u|^{p+1} dx&\\\nonumber =-\frac{2n(p-1)}{p+1} \int_{\R^n}
|u|^{p+1} dx& -\frac{2(p-1)}{p+1} \int_{\R^n }(\Delta \varphi_R-n )|u|^{p+1} dx.\end{align*}
Notice that $\Delta \varphi_R=n $ on $\left\{|x|\leq R\right\}$, hence by recalling \eqref{eq:crucdec} and summarizing the estimates above we get:
\begin{align*}\frac{d}{dt}M_{\varphi_R}(u)&\leq 2\|u\|_{\dot H^{1/2}(\R^n)}^2-\frac{2n(p-1)}{p+1} \int_{\R^n }|u|^{p+1} dx\\\nonumber& +C(R^{-1}+R^{\frac{p(-2n+1)+2n+1}{4}}\|u\|_{L^2(\R^n)}^{\frac{p+1}{2}}\|u\|_{\dot H^{1/2}(\R^n)}^{\frac{p+1}{2}})\end{align*}
which is equivalent to
$$\frac{d}{dt}M_{\varphi_R}(u)\leq 4 {\mathcal P}(u) +C(R^{-1}+R^{\frac{p(-2n+1)+2n+1}{4}}\|u\|_{L^2(\R^n)}^{\frac{p+1}{2}}\|u\|_{\dot H^{1/2}(\R^n)}^{\frac{p+1}{2}}).$$
\end{proof}

We can now conclude the proof on the inflation of the norms, in the case that the solution
exists globally in time.
First notice that by Lemma \ref{poho} we get
 ${\mathcal P}(u(t,x))<-\delta<0$, and we claim that it implies
 \begin{equation}\label{eq:stimasympt2}
\inf_t \|u(t,x)\|_{\dot H^{1/2}(\R^n)}>0
 \end{equation}
and for $R$ sufficiently large
  \begin{equation}\label{eq:stimasympt}
 \frac{d}{dt}M_{\varphi_R}(u)\leq 2 {\mathcal P}(u) -\alpha\|u\|_{\dot H^{1/2}(\R^n)}^2< -\alpha \|u\|_{\dot H^{1/2}(\R^n)}^2.
 \end{equation}
for some constant $\alpha>0$.
Let us first prove \eqref{eq:stimasympt2} and assume by contradiction the existence of a sequence of times $t_n$  such that $\lim_{n \rightarrow \infty} \|u(t_n,x)\|_{\dot H^{1/2}}^2=0$. This fact implies by Sobolev embedding and conservation of the mass, that ${\mathcal P}(u(t_n,x))\rightarrow 0$ which contradicts ${\mathcal P}(u(t,x))<-\delta<0$.\\
Now  let us  prove \eqref{eq:stimasympt}. By using \eqref{bridg}
it is sufficient to prove
that there exists $\alpha$ sufficiently small such that
$$ 4 {\mathcal P}(u) + C(R^{-1}+ R^{\frac{p(-2n+1)+2n+1}{4}}\|u\|_{L^2(\R^n)}^{\frac{p+1}{2}}\|u\|_{\dot H^{1/2}(\R^n)}^{\frac{p+1}{2}}) \leq 2 {\mathcal P}(u) -\alpha \|u\|_{\dot H^{1/2}(\R^n)}^2$$
 Notice  that $\frac{p+1}{2}<2$ and thanks to  \eqref{eq:stimasympt2} and conservation of the mass we have the inequality
 $$4 {\mathcal P}(u) + C(R^{-1}+R^{\frac{p(-2n+1)+2n+1}{4}}\|u\|_{L^2(\R^n)}^{\frac{p+1}{2}}\|u\|_{\dot H^{1/2}(\R^n)}^{\frac{p+1}{2}})$$$$\leq 4 {\mathcal P}(u) + C(R^{-1}+R^{\frac{p(-2n+1)+2n+1}{4}}\|u\|_{\dot H^{1/2}(\R^n)}^{2})$$
and hence it suffices to show that
 \begin{equation}\label{eq:kmot}
  2 {\mathcal P}(u) + C(R^{-1}+R^{\frac{p(-2n+1)+2n+1}{4}}\|u\|_{\dot H^{1/2}(\R^n)}^{2})+\alpha \|u\|_{\dot H^{1/2}(\R^n)}^2<0
  \end{equation}
 to get \eqref{eq:stimasympt}.\\
 From the identity
 $${\mathcal E}_{hw}(u)-\frac{2}{n(p-1)}{\mathcal P}(u)=\frac{r}{2}+\frac{n(p-1)-2}{2n(p-1)}\|u\|_{\dot H^{1/2}(\R^n)}^2$$
 we get
 $$\|u\|_{\dot H^{1/2}(\R^n)}^2\leq \frac{2n(p-1)}{n(p-1)-2}{\mathcal E}_{hw}(u)-\frac{4}{n(p-1)-2}
 {\mathcal P}(u).$$
 As a consequence we get
 \begin{align*}&2 {\mathcal P}(u) + C(R^{-1}+R^{\frac{p(-2n+1)+2n+1}{4}}\|u\|_{\dot H^{1/2}(\R^n)}^{2})+\alpha \|u\|_{\dot H^{1/2}(\R^n)}^2\\\nonumber &<
\big  (2-\frac{4C}{n(p-1)-2}R^{\frac{p(-2n+1)+2n+1}{4}}-\frac{4\alpha}{n(p-1)-2}\big )
 {\mathcal P}(u)
\\\nonumber& +\frac{2nC(p-1)}{n(p-1)-2} R^{\frac{p(-2n+1)+2n+1}{4}}{\mathcal E}_{hw}(f)
 + \frac{2n\alpha (p-1)}{n(p-1)-2}{\mathcal E}_{hw}(f) + CR^{-1}
  \end{align*}
 Notice that we can conclude \eqref{eq:kmot} since ${\mathcal P}(u(t,x))<-\delta$
and hence it is sufficient
to select $\alpha$ very small and $R$ very large.
 By combining
 \eqref{eq:stimasympt2} with
 \eqref{lem:boundloc}
we get
 \begin{equation}\label{lem:boundloc2}
|M_{\varphi_R}(u)|\leq C( R ) \|u\|_{\dot H^{1/2}(\R^n)}^2.
\end{equation}
From \eqref{eq:stimasympt} we deduce that we can select $t_1\in \R$ such that $M_{\varphi_R}(u(t))\leq 0$ for $t\geq t_1$ and $M_{\varphi_R}(u(t_1))=0$.
Hence integrating \eqref{eq:stimasympt} we obtain
$$M_{\varphi_R}(u(t))\leq -\alpha  \int_{t_1}^{t}\|u(s)\|_{\dot H^{1/2}(\R^n)}^2 ds.$$
Now by using \eqref{lem:boundloc2}
we get the  integral inequality
$$\left| M_{\varphi_R}(u(t))\right| \geq C(R, \alpha ) \int_{t_1}^{t} \left| M_{\varphi_R}(u(s))\right|  ds,$$
which yields an exponential lower bound.

\subsection{Instability of ${\mathcal A}_r$}

Given $v(x)\in {\mathcal A}_{r}$  we shall show that there exists
a sequence $\lambda_k\rightarrow 1$, $\lambda_k>1$
such that
$${\mathcal P}(\lambda_k^{d/2} v(\lambda_k x))<0, \quad {\mathcal E}_{hw}(\lambda_k^{d/2} v(\lambda_k x))<{\mathcal E}_{hw}(v)={\mathcal I}_r.$$
Then by denoting with $v_k(t,x)$ the unique solution
to HW such that $v_k(0,x)=\lambda_k^{d/2} v(\lambda_k x)$ we get, by the inflation of norm proved in the previous subsection,
that $v_k(t,x)$ are unbounded
in $H^{1/2}$ for large time, despite to the fact that they are arbitrary close to $v(x)$ at the initial time $t=0$. Of course it implies the instability of ${\mathcal A}_r$.\\
In order to prove the existence of $\lambda_k$ as above, we introduce the functions
$$h:(0, \infty)\ni \lambda \rightarrow {\mathcal P} (\lambda^{d/2} v(\lambda x))=\frac 12 \lambda \|v\|_{\dot H^{1/2}(\R^n)}^2- \frac{n(p-1)}{2(p+1)} \lambda^{n (p-1)/2} \| v \|^{p+1}_{L^{p+1}(\R^n)},$$
\begin{align*}g:(0, \infty)\ni \lambda &\rightarrow {\mathcal E}_{hw} (\lambda^{n/2} v(\lambda x)) \\\nonumber &=\frac{1}{2} \lambda \|v\|_{\dot H^{1/2}(\R^n)}^2 +\|v\|^2_{L^2(\R^n)}
-\frac{1}{p+1} \lambda^{n(p-1)/2}\| v \|^{p+1}_{L^{p+1}(\R^n)}.\end{align*}
Notice that since $v(x)\in \mathcal M$ we get $h(1)=0$ and hence
by elementary analysis of the function $h$ we deduce that $h(\lambda)<0$ for every $ \lambda>1$. Moreover again by the fact that $v(x)\in {\mathcal M}$
we get $g'(1)=0$ and since $g(1)= {\mathcal E}_{hw}(v)={\mathcal I}_r$ we deduce that
$g(\lambda)<{\mathcal I}_r$ for every $\lambda>0$.
It is now easy to conclude the existence of $\lambda_k$ with the desired property.

\section{Appendix}

In this appendix we prove a global existence result for 1-d quartic
sNLS with initial condition in $H^{3/2}(\R)$.
It is worth mentioning  that the same argument used
in \cite{OV}, where it is treated 1-d quartic HW, can be adapted to sNLS.
Hence
the result stated below can be improved by assuming $f(x)\in H^1(\R)$.
However we want to give the argument below, since we believe that it is more transparent and  slightly simpler. In particular it does not involve the use of fractional Leibnitz rules as in \cite{OV}. Moreover, in our opinion, it makes more clear the argument that stands behind
the modified energy technique, which is a basic tool in \cite{OV} and that hopefully will be  a basic tool to deal with other situations (see for instance \cite{PTV}).

\begin{thm} Let us fix $n=1$ and $p=4$.
Assume that $u(t,x)$ solves sNLS with initial datum
$f(x)\in H^{3/2}(\R)$ and assume moreover that $$\sup_{(-T_-(f), T_+(f))} \|u(t, x)\|_{H^{1/2}(\R)}<\infty,$$ where $(-T_-(f), T_+(f))$ is the maximal interval of existence.
Then necessarily $T_\pm (f)=\infty$, namely the solution is global.
\end{thm}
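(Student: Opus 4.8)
The plan is to derive global existence from an a priori bound on the $H^{3/2}(\R)$ norm, using the conserved mass together with the hypothesized $H^{1/2}(\R)$ bound. First I would recall that, since $3/2 > 1/2 = n/2$ in dimension $n=1$, the cheap local theory mentioned after Theorem \ref{t.le0} (Sobolev algebra plus energy estimates, with no radial assumption) produces a local $H^{3/2}$ solution together with the blow-up alternative: if $T_+(f)<\infty$ then $\limsup_{t\to T_+}\|u(t)\|_{H^{3/2}(\R)}=\infty$. Because the mass is conserved and $\sup_t\|u(t)\|_{H^{1/2}(\R)}<\infty$ by hypothesis, it therefore suffices to bound $\|(-\Delta)^{3/4}u(t)\|_{L^2(\R)}$ on every finite subinterval of $(-T_-(f),T_+(f))$; the negative-time case is identical.

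Next I would explain why a naive top-order energy estimate fails, which is what motivates the modified energy. Differentiating $\tfrac12\|u\|_{\dot H^{3/2}(\R)}^2$ and using $\im\partial_t u = \sqrt{1-\Delta}\,u - u|u|^3$, the linear part drops out because $\sqrt{1-\Delta}$ commutes with $(-\Delta)^{3/4}$ and is self-adjoint, leaving $-\mathrm{Im}\int \overline{(-\Delta)^{3/4}u}\,(-\Delta)^{3/4}(u|u|^3)\,dx$. The fully resonant piece $|u|^3(-\Delta)^{3/4}u$ contributes $-\mathrm{Im}\int |u|^3\,|(-\Delta)^{3/4}u|^2\,dx=0$ by reality, but the remaining terms, handled by a crude Leibniz rule, cost a factor $\|u\|_{L^\infty(\R)}^3$, and in one dimension $H^{1/2}(\R)$ barely fails to embed into $L^\infty(\R)$. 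Thus the a priori $H^{1/2}$ bound cannot close the estimate directly.

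Then I would introduce the \emph{modified energy} $\mathcal E_{\mathrm{mod}}(u)=\|u\|_{\dot H^{3/2}(\R)}^2+\mathcal R(u)$, where $\mathcal R$ is an explicit quartic correction carrying strictly fewer than three derivatives, designed by a normal-form maneuver: in the surviving nonlinear term one trades each factor $\sqrt{1-\Delta}\,u$ (produced after distributing the derivatives) for $\im\partial_t u + u|u|^3$ via the equation, and then integrates by parts in time. The $\im\partial_t u$-contribution is a total time derivative, which is exactly what defines $-\mathcal R$, while the $u|u|^3$-contribution becomes a higher-degree term in which the $3/2$ derivatives are spread over several factors. With $\mathcal R$ so chosen, one checks $|\mathcal R(u)|\le \tfrac12\|u\|_{\dot H^{3/2}(\R)}^2 + C(\|u\|_{H^{1/2}(\R)},\|u\|_{L^2(\R)})$ by one-dimensional fractional Gagliardo--Nirenberg interpolation, so that $\mathcal E_{\mathrm{mod}}(u)$ is comparable to $\|u\|_{H^{3/2}(\R)}^2$ along the flow; this is the transparent substitute for the fractional Leibniz rules of \cite{OV}.

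Finally I would close by a differential inequality. After the cancellation, every remainder term in $\tfrac{d}{dt}\mathcal E_{\mathrm{mod}}(u)$ distributes the top $3/2$ derivatives among at least two factors, so that $\|u\|_{\dot H^{3/2}(\R)}$ enters at most linearly, the other factors being controlled, via interpolation between the conserved $L^2$ norm and $\dot H^{3/2}$ through $H^{1/2}$, by powers of $\|u\|_{H^{1/2}(\R)}$ only. This yields $\tfrac{d}{dt}\mathcal E_{\mathrm{mod}}(u)\le C(\|u\|_{H^{1/2}(\R)},\|u\|_{L^2(\R)})\,(1+\mathcal E_{\mathrm{mod}}(u))$, whose prefactor is finite on all of $(-T_-(f),T_+(f))$ by hypothesis; Gronwall then bounds $\mathcal E_{\mathrm{mod}}(u(t))$, hence $\|u(t)\|_{H^{3/2}(\R)}$, on every finite interval, contradicting the blow-up alternative. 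The main obstacle is the construction of $\mathcal R$ and the verification that, after this normal-form cancellation, all leftover multilinear terms genuinely redistribute derivatives so that no $L^\infty$-type (critical) factor survives and $\dot H^{3/2}$ appears subquadratically; this is exactly where the special algebraic structure of the quartic nonlinearity in dimension one is used.
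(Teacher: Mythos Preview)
Your overall strategy---control $\|u\|_{H^{3/2}}$ via a modified energy so as to contradict the blow-up alternative---matches the paper's. But the heart of your argument, the claimed differential inequality
$\tfrac{d}{dt}\mathcal E_{\mathrm{mod}}(u)\le C(\|u\|_{H^{1/2}},\|u\|_{L^2})(1+\mathcal E_{\mathrm{mod}}(u))$
with a prefactor depending \emph{only} on the $H^{1/2}$ and $L^2$ norms, does not hold, and this is exactly the step you flag as ``the main obstacle''. After any reasonable normal-form correction, the remainder necessarily contains quintilinear terms of the type $\int |\partial_x u|^2\,|\sqrt{1-\Delta}\,u|\,|u|^2\,dx$ (this is what you get when, in $\int |\partial_x u|^2\,\partial_t(|u|^3)\,dx$, you substitute the equation for $\partial_t u$). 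Three full derivatives sit on three of the five factors; interpolating each of these between $H^{1/2}$ and $H^{3/2}$ forces at least a power $\|u\|_{H^{3/2}}^{2}$, and the undifferentiated factors $|u|^2$ land in $L^\infty$, which $H^{1/2}(\R)$ does \emph{not} control. Your assertion that ``no $L^\infty$-type (critical) factor survives and $\dot H^{3/2}$ appears subquadratically'' is therefore not achievable by mere redistribution of derivatives.

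What the paper actually does is accept this $L^\infty$ loss and close it with the Brezis--Gallou\"et inequality $\|u\|_{L^\infty(\R)}\le C\|u\|_{H^{1/2}}\sqrt{\ln(2+\|u\|_{H^{3/2}}/\|u\|_{H^{1/2}})}$. Concretely, the paper works with integer derivatives (it differentiates the equation in $x$ and pairs with $\partial_t\partial_x\bar u$, avoiding fractional Leibniz altogether), obtains the explicit modified energy
\[
\mathcal E(u)=\tfrac12\|(1-\Delta)^{1/4}\partial_x u\|_{L^2}^2-\tfrac54\int|\partial_x u|^2|u|^3\,dx-\tfrac34\,\Re\int(\partial_x\bar u)^2 u^2|u|\,dx,
\]
and shows $\tfrac{d}{dt}\mathcal E(u)\lesssim \|u\|_{H^{3/2}}^2\ln(2+\|u\|_{H^{3/2}})+\|u\|_{H^{3/2}}\ln^3(2+\|u\|_{H^{3/2}})$ under the standing $H^{1/2}$ bound. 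This is a \emph{log}-Gronwall inequality (yielding double-exponential growth), not the linear Gronwall you invoke. So the missing ingredients in your sketch are precisely the Brezis--Gallou\"et estimate to tame the surviving $L^\infty$ factors, and the logarithmic Gronwall lemma to close; without them your inequality for $\tfrac{d}{dt}\mathcal E_{\mathrm{mod}}$ cannot be justified.
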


\begin{proof}
We have to show that the norm $\|u(t,x)\|_{H^{3/2}}$ cannot blow-up in finite time.
In order to do that we notice that
$$\im \partial_t (\partial_x u)=(\sqrt {1-\Delta}) \partial_x u- \partial_x (u|u|^3)$$
and hence if we
multiply this equation by $\partial_t (\partial_x \bar u)$, we integrate by parts
and we get the real part, then we obtain:
$$0=\Re \int_\R (\sqrt {1-\Delta}) \partial_x u \partial_t (\partial_x \bar u) dx -
\Re \int_\R \partial_t (\partial_x \bar u) \partial_x (u|u|^3) dx
$$
$$= \Re \int_\R (1-\Delta)^{1/4} \partial_x u \partial_t (\partial_x (1-\Delta)^{1/4} \bar u) dx
- \Re \int_\R \partial_t (\partial_x \bar u) \partial_x u|u|^3 dx- \Re \int_\R \partial_t (\partial_x \bar u) u \partial_x |u|^3 dx$$
and hence
$$0= \frac 12 \frac d{dt} \|(1-\Delta)^{1/4} \partial_x u\|_{L^2(\R)}^2
-\frac 12 \int_\R \partial_t (|\partial_x u|^2)|u|^3 dx - \frac 32 \Re \int_\R \partial_t (\partial_x \bar u) u \partial_x |u|^2 |u| dx $$
$$= \frac 12 \frac d{dt} \|(1-\Delta)^{1/4} \partial_x u\|_{L^2(\R)}^2
-\frac 12 \frac d{dt} \int_\R |\partial_x u|^2|u|^3 dx +\frac 12 \int_\R |\partial_x u|^2
\partial_t (|u|^3) dx $$$$
- \frac 32 \Re \int_\R \partial_t (\partial_x \bar u) u \partial_x \bar u u |u| dx-
\frac 32 \Re \int_\R \partial_t (\partial_x \bar u) \partial_x u |u|^3 dx.$$
We can continue as follows
$$0= \frac 12 \frac d{dt} \|(1-\Delta)^{1/4} \partial_x u\|_{L^2(\R)}^2
-\frac 12 \frac d{dt} \int_\R |\partial_x u|^2|u|^3 dx +\frac 12 \int_\R |\partial_x u|^2
\partial_t (|u|^3) dx $$$$
- \frac 34 \Re \int_\R \partial_t [(\partial_x \bar u)]^2 u^2 |u| dx-
\frac 34 \Re \int_\R \partial_t (|\partial_x  u|^2) |u|^3 dx$$
$$= \frac 12 \frac d{dt} \|(1-\Delta)^{1/4} \partial_x u\|_{L^2(\R)}^2
-\frac 12 \frac d{dt} \int_\R |\partial_x u|^2|u|^3 dx +\frac 12 \int_\R |\partial_x u|^2
\partial_t (|u|^3) dx $$$$
- \frac 34 \frac d{dt} \Re \int_\R (\partial_x \bar u)^2 u^2 |u| dx
+ \frac 34 \Re \int_\R (\partial_x \bar u)^2 \partial_t (u^2 |u|) dx
$$$$-
\frac 34 \frac d{dt} \Re \int_\R |\partial_x  u|^2 |u|^3 dx
+\frac 34 \Re \int_\R |\partial_x  u|^2 \partial_t (|u|^3) dx.
$$
Summarizing we get
\begin{equation}\label{ident}\frac d{dt} {\mathcal E} (u)=
-\frac 54 \int_\R |\partial_x u|^2
\partial_t (|u|^3) dx
- \frac 34 \Re \int_\R (\partial_x \bar u)^2 \partial_t (u^2 |u|) dx
.
\end{equation}
where
$${\mathcal E} (u)=
\frac 12 \|(1-\Delta)^{1/4} \partial_x u\|_{L^2(\R)}^2
-\frac 54  \int_\R |\partial_x u|^2|u|^3 dx
- \frac 34 \Re \int_\R (\partial_x \bar u)^2 u^2 |u| dx
.
$$
Notice that since $u(t,x)$ solves sNLS we deduce that the r.h.s. in \eqref{ident}
can be estimated by the following quantity (up to a constant):
\begin{align*}\int_{\R} |\partial_x u|^2 &|\sqrt{1-\Delta} u| |u|^2 dx +
\int_{\R} |\partial_x u|^2 |u|^6 dx
\\\nonumber & \leq \|u\|_{W^{1,4}(\R)}^2 \|u\|_{H^1(\R)}\|u\|_{L^\infty(\R)}^2 + \|u\|_{H^1(\R)}^2
\|u\|_{L^\infty(\R)}^6.
\end{align*}
Next by using the Gagliardo-Nirenberg inequality
$\|u\|_{W^{1,4}(\R)}^2\leq C \|u\|_{H^{3/2}(\R)} \|u\|_{H^{1}(\R)}$,
and the Brezis-Gallou\"et inequality
(see \cite{BrGa}),
together with the assumption on the boundedness of $H^{1/2}$-norm of $u(t,x)$,
we can continue the estimate above as follows:
\begin{align}& \label{rhs} \hbox { r.h.s. \eqref{ident} }\leq C \|u\|_{H^{3/2}(\R)} \|u\|_{H^{1}(\R)}^2 \ln (2+ \|u\|_{H^{3/2}(\R)})
\\\nonumber&+ \|u\|_{H^{3/2}(\R)} \ln^3 (2+ \|u\|_{H^{3/2}(\R)})
\\\nonumber&\leq C \|u\|_{H^{3/2}(\R)}^2 \ln (2+ \|u\|_{H^{3/2}(\R)})
+ \|u\|_{H^{3/2}(\R)} \ln^3 (2+ \|u\|_{H^{3/2}(\R)}).
\end{align}
Notice also that the second and third term involved in the energy ${\mathcal E}(u)$
can be estimated as follows:
$$\|u\|_{H^1(\R)}^2 \|u\|_{L^\infty(\R)}^3\leq C \|u\|_{H^{3/2}(\R)}
\ln^{3/2} (2+ \|u\|_{H^{3/2}(\R)}).$$
By combining the estimate above with \eqref{rhs} and \eqref{ident}, and by recalling that $\frac d{dt} \|u\|_{L^2(\R)}^2=0$ we deduce
\begin{align*}&\|u(t)\|_{H^{3/2}(\R)}^2  \leq C \|u(0)\|_{H^{3/2}(\R)}^2 +
C \sup_{s\in (0,t)}\|u(s)\|_{H^{3/2}(\R)}
\ln^{3/2} (2+ \|u(s)\|_{H^{3/2}(\R)})
\\\nonumber & + C \int_0^t  \|u(s)\|_{H^{3/2}(\R)}^2 \ln (2+ \|u(s)\|_{H^{3/2}(\R)})
 + \|u(s)\|_{H^{3/2}(\R)} \ln^3 (2+ \|u(s)\|_{H^{3/2}(\R)}) ds.\end{align*}
We conclude by a suitable version of the Gronwall Lemma
(see \cite{OV} for more details on this point).
\end{proof}

\end{document}